\documentclass[UTF-8,reqno]{amsart}
\usepackage{enumerate}
\usepackage{mhequ}
\setlength{\topmargin}{-0.3cm}
\setlength{\textheight}{21.5truecm}
\usepackage{amssymb,url,color, booktabs,nccmath}
\usepackage[left=3.2cm,right=3.2cm,top=4cm,bottom=4cm]{geometry}
\usepackage{mathrsfs}
\usepackage{enumitem,dsfont}

\usepackage{color}
\usepackage[colorlinks=true]{hyperref}
\hypersetup{
    linkcolor=blue,          
    citecolor=red,        
    filecolor=blue,      
    urlcolor=cyan
}

\definecolor{darkergreen}{rgb}{0.0, 0.5, 0.0}


\setlength{\parskip}{2pt}

\numberwithin{equation}{section}
\def\theequation{\arabic{section}.\arabic{equation}}
\newcommand{\be}{\begin{eqnarray}}
\newcommand{\ee}{\end{eqnarray}}
\newcommand{\ce}{\begin{eqnarray*}}
\newcommand{\de}{\end{eqnarray*}}
\newtheorem{theorem}{Theorem}[section]
\newtheorem{lemma}[theorem]{Lemma}
\newtheorem{proposition}[theorem]{Proposition}
\newtheorem{Examples}[theorem]{Example}
\newtheorem{corollary}[theorem]{Corollary}

\newtheorem{definition}[theorem]{Definition}
\theoremstyle{definition}
\newtheorem{remark}[theorem]{Remark}


\DeclareMathOperator{\supp}{supp}

\def\eps{\varepsilon}

\def\u{\mathbf{u}}

\def\p{\partial}

\def\[{{\Big[}}
\def\]{{\Big]}}
\def\<{{\langle}}
\def\>{{\rangle}}
\def\({{\Big(}}
\def\){{\Big)}}

\def\bx{{\mathbf{x}}}

\def\dif{{\mathord{{\rm d}}}}

\def\no{\nonumber}
\def\={&\!\!=\!\!&}
\DeclareMathOperator*{\esssup}{esssup}

\def\cB{{\mathcal B}}

\def\mN{{\mathbb N}}

\def\mP{{\mathbb P}}

\def\mR{{\mathbb R}}
\def\mS{{\mathbb S}}

\def\1{{\mathbf{1}}}

\def\sC{{\mathscr C}}

\def\sP{{\mathscr P}}

\def\E{\mathbf E}

\def\geq{\geqslant}
\def\leq{\leqslant}

\def\div{\mathord{{\rm div}}}

\def\eps{\varepsilon}

\def\u{\mathbf{u}}

\def\p{\partial}

\def\[{{\Big[}}
\def\]{{\Big]}}
\def\<{{\langle}}
\def\>{{\rangle}}
\def\({{\Big(}}
\def\){{\Big)}}

\def\bx{{\mathbf{x}}}

\def\dif{{\mathord{{\rm d}}}}

\def\no{\nonumber}
\def\={&\!\!=\!\!&}
\def\bt{\begin{theorem}}
\def\et{\end{theorem}}
\def\bl{\begin{lemma}}
\def\el{\end{lemma}}
\def\br{\begin{remark}}
\def\er{\end{remark}}
\def\bx{\begin{Examples}}
\def\ex{\end{Examples}}
\def\bd{\begin{definition}}
\def\ed{\end{definition}}
\def\bp{\begin{proposition}}
\def\ep{\end{proposition}}
\def\bc{\begin{corollary}}
\def\ec{\end{corollary}}

\def\geq{\geqslant}
\def\leq{\leqslant}

\def\div{\mathord{{\rm div}}}

\def\Id{\textrm{Id}}

 \def\R{\mathbb R}
 \def\R{\mathbb R}    
\def\N{\mathbb N}  
   
\def\<{\langle} \def\>{\rangle}

\allowdisplaybreaks

\begin{document}

\title[Probabilistically strong and  Markov solutions  to stochastic 3D NSE]{Global-in-time probabilistically strong and  Markov solutions  to stochastic 3D Navier--Stokes equations: existence and non-uniqueness}

\author{Martina Hofmanov\'a}
\address[M. Hofmanov\'a]{Fakult\"at f\"ur Mathematik, Universit\"at Bielefeld, D-33501 Bielefeld, Germany}
\email{hofmanova@math.uni-bielefeld.de}

\author{Rongchan Zhu}
\address[R. Zhu]{Department of Mathematics, Beijing Institute of Technology, Beijing 100081, China; Fakult\"at f\"ur Mathematik, Universit\"at Bielefeld, D-33501 Bielefeld, Germany}
\email{zhurongchan@126.com}

\author{Xiangchan Zhu}
\address[X. Zhu]{ Academy of Mathematics and Systems Science,
Chinese Academy of Sciences, Beijing 100190, China; Fakult\"at f\"ur Mathematik, Universit\"at Bielefeld, D-33501 Bielefeld, Germany}
\email{zhuxiangchan@126.com}
\thanks{
This project has received funding from the European Research Council (ERC) under the European Union's Horizon 2020 research and innovation programme (grant agreement No. 949981).  The financial support by the DFG through the CRC 1283 ``Taming uncertainty and profiting
 from randomness and low regularity in analysis, stochastics and their applications'' is greatly acknowledged.
 R.Z. is grateful to the financial supports of the NSFC (No.  11922103).
  X.Z. is grateful to the financial supports  in part by National Key R\&D Program of China (No. 2020YFA0712700) and the NSFC (No. 11771037,  12090014, 11688101) and
  the support by key Lab of Random Complex Structures and Data Science,
 Youth Innovation Promotion Association (2020003), Chinese Academy of Science.
}

\begin{abstract}
We are concerned with the  three dimensional incompressible  Navier--Stokes equations driven by an additive stochastic forcing of trace class. First, for every divergence free initial condition in $L^{2}$ we establish existence of infinitely many  global-in-time probabilistically strong and analytically weak solutions, solving one of the  open problems in the field. This result in particular implies non-uniqueness in law. Second,
 we prove non-uniqueness of the associated Markov processes in a suitably chosen class of analytically weak solutions satisfying a relaxed form of an energy inequality. Translated to the deterministic setting, we obtain non-uniqueness of the associated semiflows.
 \end{abstract}

\subjclass[2010]{60H15; 35R60; 35Q30}
\keywords{stochastic Navier--Stokes equations, probabilistically strong solutions, Markov selection, non-uniqueness in law, convex integration}

\date{\today}

\maketitle

\tableofcontents

\section{Introduction}

The research focused on well-posedness of the three dimensional incompressible Navier--Stokes equations has experienced an immense  breakthrough recently: Buckmaster and Vicol \cite{BV19a} established non-uniqueness of weak solutions with finite kinetic energy. More precisely, the authors showed that for any prescribed smooth and non-negative function $e$ there is a weak solution whose kinetic energy is given by $e$.  Remarkably,  Buckmaster, Colombo and Vicol \cite{BCV18} were even able to connect two arbitrary strong solutions via a weak solution. Burczak, Modena and Sz{\'e}kelyhidi~Jr. \cite{BMS20} then obtained a number of new ill-posedness results for power-law fluids and in particular also non-uniqueness of weak solutions to the Navier--Stokes equations for every given divergence free initial condition in $L^{2}$. Sharp non-uniqueness results for the Navier--Stokes equations in dimension $d\geq 2$ were obtained by Cheskidov and Luo \cite{CL20}.
All the above results rely on the method of convex integration introduced to fluid dynamics by  De~Lellis and Sz{\'e}kelyhidi~Jr. \cite{DelSze2,DelSze3,DelSze13}. This method has already led to a number of groundbreaking results concerning the incompressible Euler equations,  culminating  in the proof of  Onsager's conjecture by Isett \cite{Ise} and by Buckmaster, De~Lellis, Sz{\'e}kelyhidi~Jr. and Vicol \cite{BDSV}. The interested reader is referred to the nice reviews \cite{BV19, BV20} for further details and references.

In light of these exciting but rather negative developments, the question of a possible regularizing effect provided by a suitable stochastic perturbation becomes even more prominent. Recall that some properties of the Navier--Stokes system have indeed been shown to improve under the presence of a stochastic noise. In the deterministic setting, a selection of solutions depending  continuously on the initial condition has not been obtained. However, the probabilistic counterpart, i.e. the Feller property and even the strong Feller property which corresponds to a smoothing with respect to the initial condition, was established for a sufficiently non-degegenerate noise  by Da~Prato and Debussche \cite{DD03} and by Flandoli and Romito \cite{FR08}. A linear multiplicative noise as well as a transport type noise  has been shown to prevent blow up of strong solutions in a certain sense with large probability, see R\"ockner, Zhu and Zhu \cite{RZZ14} and Flandoli, Luo \cite{FL19}. The latter result has been generalized to  provide  regularization by highly oscillating but deterministic vector fields by Flandoli, Hofmanov\'a, Luo and Nilssen \cite{FHLN20}.

\subsection{Main results}

In the present paper, we are concerned with  stochastic Navier--Stokes equations  on $\mathbb{T}^3$ driven by an additive stochastic noise. The  equations govern the time evolution of the fluid velocity $u$ and  read as
\begin{equation}
\label{1}
\aligned
 \dif u+\div(u\otimes u)\,\dif t+\nabla P\,\dif t&=\Delta u \,\dif t+\dif B,
\\
\div u&=0,\\
u(0)&=u_{0}.
\endaligned
\end{equation}
Here $P$ is the associated pressure, $B$ is a $GG^*$-Wiener process on some probability space $(\Omega, \mathcal{F}, \mathbf{P})$ and $G$ is a Hilbert--Schmidt operator from $U$ to $L^{2}$. We do not require any further space regularity of the noise.

Unlike  their  deterministic counterpart, the stochastic equations possess additional structural features beside mere existence, uniqueness and continuous dependence on initial conditions. First of all, we distinguish between probabilistically strong and probabilistically weak (also called  martingale) solutions.
Probabilistically strong solutions are constructed on a given probability space and are adapted with respect to the given  noise, which means the solutions at time $t$ only depend on   the noise from $0$ to $t$. Probabilistically weak solutions do not have this property: they are typically obtained by the method of compactness where the noise as well as the probability space becomes part of the construction.  This is also the case for the stochastic counterpart of the so-called  Leray solutions, meaning solutions satisfying an energy inequality and belonging to $C_{\rm{weak}}(0,T;L^{2})\cap L^2(0,T;H^1)$. They are obtained by a compactness argument and in the stochastic setting they are only probabilistically weak. Indeed, on the one hand, it is necessary to take expectation to control the noise and obtain uniform energy estimates, which then leads to probabilistically weak solutions. Moreover, due to the lack of uniqueness we cannot apply Yamada--Watanabe's theorem to obtain probabilistically strong solutions.  On the other hand, if we analyze the equation $\omega$-wise, then the converging subsequence from compactness argument may depend on $\omega$. This destroys adaptedness and measurable selection only applies to a given $\sigma$-field, not a filtration.

Consequently, it has been a long standing open problem to construct probabilistically strong solutions to the Navier--Stokes system \eqref{1}, see page 84 in \cite{F08}.
In our previous work \cite{HZZ19}, we made a first step in this direction and  obtained probabilistically strong and analytically weak solutions, i.e. not Leray, before a suitable stopping time. The initial value was a part of the construction and could not have been prescribed.
As a first goal of the present paper we intend to overcome these two limitations: We prove the existence of global-in-time non-unique probabilistically strong and analytically weak solutions for every given divergence free initial condition in $L^{2}$.

\bt\label{th:ma4}
Let $u_{0}\in L^2$ $\mathbf{P}$-a.s. be a  divergence free initial condition independent of the Wiener process $B$. There exist infinitely many probabilistically strong and analytically weak solutions to the Navier--Stokes system \eqref{1} on $[0,\infty)$. The solutions belong to  $L^p_{\rm loc}([0,\infty);L^2)\cap C([0,\infty),W^{\frac12,\frac{31}{30}})$ $\mathbf{P}$-a.s. for all $p\in[1,\infty)$.
\et

Furthermore, our construction  directly implies the following result, essentially proving part of the classical Yamada--Watanabe--Engelbert's theorem  (see Kurtz \cite{K07}, Cherny \cite{C03} for a  version applicable to finite dimensional stochastic differential equations).
It strengthens \cite[Theorem~1.2]{HZZ19} in the sense that it shows  non-uniqueness in law for arbitrary divergence free initial condition in $L^{2}$. However, the considered solution class is larger and hence \cite[Theorem 1.2]{HZZ19} does not follow from our present result. More precisely,  even for $\gamma$ sufficiently small, Theorem~\ref{th:ma4} does not give solutions in $L^{\infty}_{\rm loc}([0,\infty);L^{2})\cap L^{2}_{\rm loc}([0,\infty);H^{\gamma})$ as  considered in \cite[Theorem 1.2]{HZZ19}.

\begin{corollary}\label{cor:law}
Non-uniqueness in law holds for the Navier--Stokes system \eqref{1} for every given initial law  supported on divergence free vector fields in  $L^{2}$.
\end{corollary}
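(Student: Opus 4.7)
The plan is to deduce Corollary \ref{cor:law} directly from Theorem \ref{th:ma4} via a Yamada--Watanabe--Engelbert type argument. Fix any initial law $\nu$ supported on divergence free vector fields in $L^{2}$, and construct a stochastic basis $(\Omega,\mathcal{F},(\mathcal{F}_{t})_{t\ge 0},\mathbf{P})$ carrying an initial datum $u_{0}\sim\nu$ together with a $GG^{*}$-Wiener process $B$ independent of $u_{0}$. Applying Theorem \ref{th:ma4} on this basis, I would select at least two probabilistically strong and analytically weak solutions $u^{1},u^{2}$ of \eqref{1} driven by the same $u_{0}$ and $B$, satisfying $\mathbf{P}(u^{1}\neq u^{2})>0$.

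Next I would show that $\Law(u^{1})\neq \Law(u^{2})$ as processes. Suppose for contradiction that the two laws coincide. Then for the initial law $\nu$ one has simultaneously weak existence, uniqueness in law, and existence of a probabilistically strong solution (in fact two, $u^{1}$ and $u^{2}$, each of which is an adapted functional of $(u_{0},B)$ on the same basis). The Yamada--Watanabe--Engelbert theorem, in the formulation of Cherny and Kurtz, then forces pathwise uniqueness of strong solutions on any given stochastic basis, thereby contradicting $\mathbf{P}(u^{1}\neq u^{2})>0$. Hence $\Law(u^{1})\neq\Law(u^{2})$, which is precisely the claimed non-uniqueness in law.

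The main obstacle in making this rigorous is the transfer of the classical Yamada--Watanabe--Engelbert theorem, originally formulated for finite-dimensional SDEs, to the infinite-dimensional setting of \eqref{1}. Concretely, one needs to know that strong solutions are Borel measurable functionals of $(u_{0},B)$ taking values in a Polish path space, and to carry out the marginal-to-joint uniqueness step in that path space. Theorem \ref{th:ma4} already hands us the correct Polish target $C([0,\infty),W^{\frac12,\frac{31}{30}})$, so this extension is essentially routine. As a backup strategy that bypasses Y--W--E altogether, I would exploit the flexibility of the convex integration scheme underlying Theorem \ref{th:ma4}: that scheme allows one to prescribe, and therefore distinguish, the kinetic energy profiles of $u^{1}$ and $u^{2}$, so that $\Law(\|u^{1}(t)\|_{L^{2}}^{2})\neq \Law(\|u^{2}(t)\|_{L^{2}}^{2})$ for some $t>0$, which directly separates the two laws.
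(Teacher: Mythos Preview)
Your primary route via Yamada--Watanabe--Engelbert has a logical slip in the contradiction setup. You assume $\Law(u^{1})=\Law(u^{2})$ and then claim ``uniqueness in law'' holds, but equality of these two particular laws does not imply that \emph{every} weak solution has this law, which is the input Cherny's theorem needs. The correct contrapositive is: assume uniqueness in law for \emph{all} solutions with initial law $\nu$; then Cherny's theorem (strong existence plus uniqueness in law implies pathwise uniqueness) would force $u^{1}=u^{2}$ $\mathbf{P}$-a.s., contradicting Theorem~\ref{th:ma4}. With this fix the strategy is coherent, but the transfer of Cherny--Kurtz to analytically weak solutions of an infinite-dimensional SPDE in the class $L^{p}_{\mathrm{loc}}L^{2}\cap C W^{1/2,31/30}$ is not as routine as you assert; indeed the paper advertises Corollary~\ref{cor:law} as \emph{proving} part of the YWE circle of implications in this setting rather than invoking it.

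Your backup strategy is exactly what the paper does, and is the cleaner route here. The convex integration of Section~\ref{s:in} carries a free parameter $K$ and yields the energy control $\big|\|v_{K}\|_{L^{2}}^{2}-3K\big|\leq c$ on $(4\sigma_{0}\wedge T_{L},T_{L}]$ (see \eqref{eq:K}). Choosing $L$ so that $\mathbf{P}(4\sigma_{0}<T_{L})>1/2$ and taking $K,K'$ with $3|K-K'|>2c$, the random variables $\|v_{K}(T_{L})\|_{L^{2}}^{2}$ and $\|v_{K'}(T_{L})\|_{L^{2}}^{2}$ fall into disjoint intervals with probability exceeding $1/2$, which separates $\Law(u_{K})$ from $\Law(u_{K'})$ directly and carries over to the globally extended solutions. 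Note the energy is not ``prescribed'' as in Section~\ref{s:1.1} but only pinned down approximately via $K$; this is enough because the intervals can be made disjoint.
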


Another desired property of stochastic systems is the Markov property, which in the deterministic setting translates to the usual semigroup or semiflow property for solutions. It is an immediate consequence of uniqueness. However,  also for evolution systems where uniqueness is either unknown or  not valid, it may be possible to select a Markov solution by an abstract selection procedure introduced by Krylov \cite{K73}. A similar approach developed by Cardona and Kapitanskii \cite{CorKap} leads to a selection of a semiflow for deterministic equations.  In the context of Leray solutions to the Navier--Stokes system \eqref{1}, Markov selection was done by Flandoli and Romito \cite{FR08}, further models were considered by Goldys, R\"ockner and Zhang \cite{GRZ09}, stochastic compressible Navier--Stokes system was treated by Breit, Feireisl and Hofmanov\'a \cite{BFH18markov} and recently we presented an application to  stochastic Euler equations in \cite{HZZ20}.

The second main result of the present paper establishes non-uniqueness of Markov selections.

\bt\label{th:ma2}
Markov families associated to the Navier--Stokes system \eqref{1} are not unique.
\et

In the context of deterministic Navier--Stokes system, i.e. \eqref{1} with $G=0$, we may therefore deduce the following new result.

\begin{corollary}\label{cor:det}
The deterministic Navier--Stokes system generates non-unique semiflow solutions.
\end{corollary}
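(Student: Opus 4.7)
The plan is to deduce Corollary \ref{cor:det} from Theorem \ref{th:ma2} by specializing to $G=0$, in which case the stochastic system \eqref{1} collapses to the deterministic incompressible Navier--Stokes equations on $\mathbb{T}^3$. First I would check that the proof of Theorem \ref{th:ma2} goes through without modification when $G\equiv 0$: the abstract Krylov-type Markov selection does not require non-degeneracy of the noise but operates on the set of admissible martingale solutions associated with each initial datum, and the convex integration construction underlying Theorem \ref{th:ma4} furnishes, already in the deterministic setting, enough distinct analytically weak solutions for every $u_0\in L^2$ to seed two different Markov families that disagree on some initial condition.

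Second, I would identify, in the regime $G=0$, a Markov family $\{\mathbf{P}_{u_0}\}_{u_0\in L^2}$ with a semiflow solution in the sense of Cardona and Kapitanski. Since the stochastic forcing is trivial and the canonical filtration carries no randomness other than that inherited from the initial datum, $\mathbf{P}_{u_0}$ is concentrated on deterministic trajectories emanating from $u_0$. Choosing the Markov family to be extremal with respect to the selection procedure forces each $\mathbf{P}_{u_0}$ to be a Dirac measure $\delta_{\Phi(\cdot,u_0)}$ on a single path; otherwise a nontrivial disintegration of $\mathbf{P}_{u_0}$ would yield two admissible Markov families whose convex combination recovers the original one, contradicting extremality. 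The Markov identity $\mathbf{P}_{s+t}=\mathbf{P}_s\mathbf{P}_t$ then becomes exactly the semiflow relation
\[
\Phi(s+t,u_0)=\Phi(t,\Phi(s,u_0)),\qquad s,t\ge 0,
\]
while measurability of $u_0\mapsto \mathbf{P}_{u_0}$ transfers to measurability of the map $(t,u_0)\mapsto \Phi(t,u_0)$, delivering a semiflow in the sense of \cite{CorKap}.

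Finally, applying the above dictionary to the two distinct Markov families supplied by Theorem \ref{th:ma2} with $G=0$ produces two semiflows $\Phi^{(1)}$ and $\Phi^{(2)}$ that differ at some initial datum, establishing Corollary \ref{cor:det}. The only step that is not purely notational is the extremality reduction in the second paragraph, which is the main obstacle: one must verify that the Markov selection used in the proof of Theorem \ref{th:ma2} can be refined to pick an extremal family, and that extremality in the deterministic case indeed forces Dirac concentration on individual trajectories rather than on genuine probability mixtures of distinct weak solutions.
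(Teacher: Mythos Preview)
Your route differs from the paper's, and the step you yourself flag as the main obstacle is a genuine gap. The paper does not pass through the stochastic Markov selection with $G=0$ and then try to argue that the selected laws are Dirac masses. Instead, the proof of Corollary~\ref{cor:det} works on the deterministic side from the outset: one takes the class of analytically weak solutions satisfying the relaxed energy inequality of Remark~\ref{r:CG}, checks stability, shift and concatenation for this class, and applies the \emph{deterministic} semiflow selection of Cardona and Kapitanski~\cite{CorKap} directly. Non-uniqueness of the resulting semiflows then follows from the convex integration construction with prescribed energy (Theorem~\ref{Main results1} with $G=0$, or already \cite{BV19a}) by the same maximization argument as in the proof of Theorem~\ref{th:ma2}. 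The paper also notes that the selection initially yields the semiflow identity only for a.e.\ time, mirroring the a.s.\ Markov property, and that one upgrades to a genuine semiflow by enlarging the phase space with the energy as an extra variable as in \cite{Bas20,BreFeiHof19,BreFeiHof19B}; your proposal does not address this point either.

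The extremality reduction you propose does not go through without substantial additional work. The Krylov-type procedure behind Theorem~\ref{selection} successively maximizes functionals of the form \eqref{eq:func}; there is no reason the maximizer should be an extreme point of $\mathscr{C}(x_0,C_p)$, and no mechanism in the proof for refining to an extremal selection while preserving the (a.s.) Markov property across all initial data. More importantly, your claim that a non-Dirac $P_{u_0}$ can always be split into two admissible pieces is unjustified: condition (M3) is an inequality on conditional expectations, not a pathwise statement, so if $P$ is a mixture of deterministic trajectories that coincide on $[0,s]$ and separate afterwards, the a.s.\ supermartingale inequality for the mixture does not imply it for the individual components. Hence a non-Dirac element of $\mathscr{C}(x_0,C_p)$ can in principle be extremal, and the passage from ``Markov family'' to ``semiflow'' via extremality is not available. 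The paper's direct use of the deterministic selection sidesteps this difficulty entirely.
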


\subsection{Ideas of the proofs}

Both our main results, Theorem~\ref{th:ma4} and Theorem~\ref{th:ma2}, make use of the convex integration method. An adaptation of this method to the stochastic setting has already appeared in a number of works, proving results of ill-posedness in various settings, see \cite{CFF19,BFH20,HZZ19,HZZ20, HZZ21,  Ya20a, Ya20b, Ya21, Ya21b, Ya21c, RS21}. In particular, our previous work \cite{HZZ19} was also concerned with the Navier--Stokes system \eqref{1} and we put forward a general approach leading to non-uniqueness in law. The~core lay in the extension of a convex integration solution beyond a stopping time by connecting it to a general martingale solution. This was then applied to  other models in  \cite{HZZ20, HZZ21, Ya20a, Ya20b, Ya21, Ya21b, Ya21c, RS21}. Among the results of \cite{HZZ20}, we  also used convex integration to obtain non-unique strong Markov selections to the stochastic Euler equations. Combining paracontrolled distributions \cite{GIP15, ZZ15} together with  convex integration, we could construct infinitely many global-in-time probabilistically strong solutions to 3D Navier-Stokes equations driven by space-time white noise in \cite{HZZ21}.

In order to derive the results of the present paper, it is necessary to significantly strengthen the convex integration procedure from \cite{HZZ19}. We are inspired by the recent work \cite{BMS20}, but we formulate the main iteration rather in the spirit of \cite{BV19} and use the intermittent jets from \cite{BCV18}. This seems to be better suited for the stochastic setting, see Remark~\ref{r:5.3}.  More precisely, for the non-uniqueness of Markov selections in Theorem~\ref{th:ma2}, the key point is to obtain non-uniqueness of solutions which are stable under approximations of the initial datum and satisfying a suitable shift and concatenation property. That is, a class of solutions with a suitable version of an energy inequality leading to  compactness. As non-uniqueness of Leray solutions, which are typically used for Markov selections, is out of reach of the current techniques, we formulated a relaxed version of the energy inequality (see the discussion in Section~\ref{s:martsol} and in particular Definition~\ref{martingale solution}),
 which we prove to  be satisfied by both Leray as well as convex integration solutions.
To this end,  we use convex integration to  construct  solutions with a prescribed deterministic kinetic energy. This is actually rather surprising as  the  energies of solutions to the stochastic Navier--Stokes equations \eqref{1} are  a priori random.

\bt\label{Main results1}
Let  $\bar{e}\geq \underline{e}>4 $ and $\tilde e>0$ be given. Then there  exist  $\gamma\in (0,1)$ and a $\mathbf{P}$-a.s. strictly positive stopping time $\mathfrak{t}$   such that the following holds true: For every $e:[0,1]\to [\underline{e},\infty)$  belonging to $C^{1}_{b}$ with $\|e\|_{C^{0}}\leq \bar{e}$ and $\|e'\|_{C^{0}}\leq \tilde e$,  there exist a deterministic initial value $u_{0}$ and
	a probabilistically strong  and analytically weak solution $u\in C([0,\mathfrak{t}];H^{\gamma})$ $\mathbf{P}$-a.s. to \eqref{1}
satisfying
	\begin{equation}\label{eq:vv}
	\esssup_{\omega\in\Omega}\sup_{t\in[0,\mathfrak{t}]}\|u(t)\|_{H^{\gamma}}<\infty,
	\end{equation}
	and for $t\in [0,\mathfrak{t}]$
	\begin{equation}\label{eq:env}
	\|u(t)\|_{L^{2}}^2=e(t).
	\end{equation}
	Furthermore, if $e_{1},$ $e_{2}$ are given  as above with the same constants $\underline{e},\bar{e},\tilde e$, and additionally $e_{1}=e_{2}$ on  $[0,t]$ for some $t\in[0,1]$, then the corresponding solutions $u_{1}$ and $u_{2}$ coincide on $[0,t\wedge \mathfrak{t}]$.
	
\et

The choice  $e(t)=c_{0}+c_{1}t$ then permits to obtain convex integration solutions starting from the same initial condition and satisfying the relaxed energy inequality in Definition~\ref{martingale solution}. This is then  compatible with the Markov selection procedure and leads to the proof of Theorem~\ref{th:ma2} by using approach developed in \cite{HZZ19}. Note that, similarly to \cite{BMS20}, we require the energy $e$ to be strictly positive. This seems to be even more important in the stochastic setting and in particular it is not clear how to generalize the method of \cite{BV19a} which applies  also to vanishing energies.

As the main ingredient of the construction of global probabilistically strong solutions in  Theorem~\ref{th:ma4} we need to strengthen the convex integration method in another direction. Namely, it is necessary to  prescribe a general divergence free initial condition in $L^{2}$. Note that the initial condition was a part of the construction in Theorem~\ref{Main results1} as well as in \cite{HZZ19}.  We again  profit from the ideas in \cite{BMS20} and establish the existence of infinitely many probabilistically strong and analytically weak solutions defined up to a suitable stopping time. Based on this result, we are able to extend every such convex integration solution by another convex integration solution, proving thereby Theorem~\ref{th:ma4} after countably many extensions.

\begin{remark}
The above results can  easily be extended to a linear multiplicative  noise of the form $u\,\dif B$ where  $B$ is  $\mR$-valued Brownian motion. Indeed, similarly to \cite{HZZ19} we may perform a  transformation $v:=u\exp(-B)$ with $v$ satisfying the following random PDE
\begin{align}
\p_tv+\frac12v-\Delta v+\theta \div (v\otimes v)+\theta^{-1} \nabla P=&0,\label{eq:lin}
\\\div v=&0,\no\end{align}
with $\theta=\exp(B)$. Then we may use similar arguments as in Section \ref{s:1.1} and Section \ref{s:in} to construct convex integration solutions to \eqref{eq:lin} with a prescribed energy and for a given initial condition, respectively. Consequently, existence and non-uniqueness of Markov families as well global existence and non-uniqueness of probabilistically strong solutions in the spirit of Theorem~\ref{th:ma4} and Theorem~\ref{th:ma2}, respectively, both hold in this case.
\end{remark}

\noindent{\bf Organization of the paper.}
In Section~\ref{s:not} we collect the basic notations used throughout the~paper. Section~\ref{s:1.1} is devoted to our first convex integration result, Theorem~\ref{Main results1}. This is then used in Section~\ref{sec:mar} in order to prove Theorem~\ref{th:ma2} and Corollary~\ref{cor:det}. Section~\ref{s:in} is concerned with the proof of Theorem~\ref{th:ma4} and Corollary~\ref{cor:law}. In Appendix we collect several auxiliary results.

\section{Notations}
\label{s:not}

\subsection{Function spaces}

  Throughout the paper, we employ the notation $a\lesssim b$ if there exists a constant $c>0$ such that $a\leq cb$, and we write $a\simeq b$ if $a\lesssim b$ and $b\lesssim a$. $\mN_{0}:=\mN\cup \{0\}$. Given a Banach space $E$ with a norm $\|\cdot\|_E$ and $T>0$, we write $C_TE=C([0,T];E)$ for the space of continuous functions from $[0,T]$ to $E$, equipped with the supremum norm $\|f\|_{C_TE}=\sup_{t\in[0,T]}\|f(t)\|_{E}$. We also use $CE$ or $C([0,\infty);E)$ to denote the space of continuous functions from $[0,\infty)$ to $E$. For $\alpha\in(0,1)$ we  define $C^\alpha_TE$ as the space of $\alpha$-H\"{o}lder continuous functions from $[0,T]$ to $E$, endowed with the norm $\|f\|_{C^\alpha_TE}=\sup_{s,t\in[0,T],s\neq t}\frac{\|f(s)-f(t)\|_E}{|t-s|^\alpha}+\sup_{t\in[0,T]}\|f(t)\|_{E}.$ Here we use $C_T^\alpha$ to denote the case when $E=\mathbb{R}$.  We  also use $C_\mathrm{loc}^\alpha E$ to denote the space of functions from $[0,\infty)$ to $E$ satisfying $f|_{[0,T]}\in C_T^\alpha E$ for all $T>0$. For $p\in [1,\infty]$ we write $L^p_TE=L^p([0,T];E)$ for the space of $L^p$-integrable functions from $[0,T]$ to $E$, equipped with the usual $L^p$-norm. We also use $L^p_{\mathrm{loc}}([0,\infty);E)$ to denote the space of functions $f$ from $[0,\infty)$ to $E$ satisfying $f|_{[0,T]}\in L^p_T E$ for all $ T>0$.
    We use $L^p$ to denote the set of  standard $L^p$-integrable functions from $\mathbb{T}^3$ to $\mathbb{R}^3$. For $s>0$, $p>1$ we set $W^{s,p}:=\{f\in L^p; \|(I-\Delta)^{\frac{s}{2}}f\|_{L^p}<\infty\}$ with the norm  $\|f\|_{W^{s,p}}=\|(I-\Delta)^{\frac{s}{2}}f\|_{L^p}$. Set $L^{2}_{\sigma}=\{f\in L^2; \int_{\mathbb{T}^{3}} f\,\dif x=0,\div f=0\}$. For $s>0$, we define $H^s:=W^{s,2}\cap L^2_\sigma$ and we normalize the corresponding norm so that  $\|f\|_{H^s}\leq \|\nabla f\|_{L^2}$. This is used in order to  include Leray solutions into our class of solutions without worrying about an implicit constant. For $s<0$ define $H^s$ to be the dual space of $H^{-s}$.
For $T>0$ and a domain $D\subset\R^{+}$ we denote by  $C^{N}_{T,x}$ and $C^{N}_{D,x}$, respectively, the space of $C^{N}$-functions on $[0,T]\times\mathbb{T}^{3}$ and on $D\times\mathbb{T}^{3}$, respectively,  $N\in\N_{0}$. The spaces are equipped with the norms
$$
\|f\|_{C^N_{T,x}}=\sum_{\substack{0\leq n+|\alpha|\leq N\\ n\in\N_{0},\alpha\in\N^{3}_{0} }}\|\partial_t^n D^\alpha f\|_{L^\infty_T L^\infty},\qquad \|f\|_{C^N_{D,x}}=\sum_{\substack{0\leq n+|\alpha|\leq N\\ n\in\N_{0},\alpha\in\N^{3}_{0} }}\sup_{t\in D}\|\partial_t^n D^\alpha f\|_{ L^\infty}.
$$
 For a Polish space $H$ we denote by $\mathcal{B}(H)$  the $\sigma$-algebra of Borel sets in $H$.

\subsection{Probabilistic elements}\label{s:2.2}

Let  $\Omega_0:=C([0,\infty);H^{-3})\cap L^2_{\rm{loc}}([0,\infty);L^2_\sigma)$
and let $\mathscr{P}(\Omega_0)$ denote the set of all probability measures on $(\Omega_0,\mathcal{B})$ with $\mathcal{B}$ being the Borel $\sigma$-algebra coming from the topology of locally uniform convergence on $\Omega_0$. Let  $x:\Omega_0\rightarrow H^{-3}$ denote the canonical process on $\Omega_{0}$ given by
$x_t(\omega)=\omega(t).$
Similarly, for $t\geq 0$ we define $\Omega_{0}^t:=C([t,\infty);H^{-3})\cap L^2_{\rm{loc}}([t,\infty);L^2_\sigma)$ equipped with its Borel $\sigma$-algebra $\mathcal{B}^{t}$ which coincides with $\sigma\{ x(s),s\geq t\}$.
Finally,  we define the canonical filtration  $\mathcal{B}_t^0:=\sigma\{ x(s),s\leq t\}$, $t\geq0$, as well as its right continuous version $\mathcal{B}_t:=\cap_{s>t}\mathcal{B}^0_s$, $t\geq 0$. For a given probability measure $P$ we denote by $\mathbf{E}^P$  the expectation under $P$.

Regarding the driving noise, we assume that $B$ is a $GG^*$-Wiener process on some probability space $(\Omega, \mathcal{F}, \mathbf{P})$ and $G$ is a Hilbert--Schmidt operator from $U$ to $L_{\sigma}^2(\mathbb{T}^{3})$ for some Hilbert space $U$. We denote the square of its Hilbert--Schmidt norm by $C_{G}:=\|G\|^{2}_{L_2(U;L^{2}_{\sigma})}$. 
For notational convenience, we work under the assumption $G\neq 0$ and  we outline  a possible extension to the deterministic setting in Remark~\ref{r:CG}.

\section{Construction of solutions with a prescribed energy}
\label{s:1.1}

This section is devoted to the proof of Theorem~\ref{Main results1}. More precisely, by means of  the convex integration method we construct solutions to the Navier--Stokes system \eqref{1} with a prescribed kinetic energy: for a given $C_{b}^{1}$-strictly positive function $e$ we construct an analytically weak and probabilistically strong solution before a stopping time such that $\|u(t)\|_{L^2}^2=e(t)$ $\mathbf P$-a.s. We note that the energy $e$ is deterministic which leads to a rather surprising result as one would expect that kinetic energies of solutions to the stochastic Navier--Stokes system are a priori random. Results of this section are then employed in Section~\ref{sec:mar} to construct non-unique Markov solutions and to prove Theorem~\ref{th:ma2}.

Note that the initial value $u(0)$ in the present section is part of the construction, that is, it cannot be prescribed in advance. We focus on this issue later in Section~\ref{s:in} where a modified convex integration scheme permits to construct solutions for a given initial condition, leading to the existence of infinitely many probabilistically strong and global-in-time solutions for every divergence free initial condition in Theorem~\ref{th:ma4}. However, the latter solutions cannot have a prescribed energy, that is, they are not suitable for Markov selections.

In this and the following section we fix a probability space $(\Omega,\mathcal{F},\mathbf{P})$ with a  $GG^*$-Wiener process $B$. We let $(\mathcal{F}_t)_{t\geq0}$ be the normal filtration generated by $B$, that is, the  canonical right continuous filtration augmented by all the $\mathbf{P}$-negligible sets.  In order to verify that the solution constructed in this section is a martingale solution before a suitable stopping time, it is essential that the solution is adapted to this filtration, which corresponds to a probabilistically strong solution.

We intend to develop an iteration procedure leading to the proof of Theorem \ref{Main results1}. To this end, we decompose the Navier--Stokes system \eqref{1} into two parts, one is linear and contains the stochastic integral, whereas the second one is a nonlinear but random PDE. More precisely, we consider
\begin{equation}\label{linear}
\aligned
\dif z -\Delta z \,\dif t+\nabla P_1\,\dif t&=\dif B,
\\\div z&=0,
\\z(0)&=0,
\endaligned
\end{equation}
and
\begin{equation}\label{nonlinear}
\aligned
\partial_tv -\Delta v+\div((v+z)\otimes (v+z))+\nabla P_{2}&=0,
\\
\div v&=0,
\endaligned
\end{equation}
where by $P_{1}$ and $P_{2}$ we denoted the associated pressure terms.

Then, we apply the convex integration method to the nonlinear equation (\ref{nonlinear}). The iteration is indexed by a parameter $q\in\mathbb{N}_{0}$. At each step $q$, a pair $(v_q, \mathring{R}_q)$ is constructed solving the following system
\begin{equation}\label{induction}
\aligned
\partial_tv_q-\Delta v_q +\div((v_q+z_q)\otimes (v_q+z_q))+\nabla p_q&=\div \mathring{R}_q,
\\
\div v_q&=0.
\endaligned
\end{equation}
{Here $z_q=\mP_{\leq f(q)}$ for $f(q)=\lambda_{q+1}^{\alpha/8}$ and $\mathbb{P}_{\leq f(q)}$ is the projection onto Fourier frequencies smaller  than $ f(q)$ in absolute value. We require $\alpha b\in 8\mN$.}
We consider an increasing sequence $\{\lambda_q\}_{q\in\mathbb{N}_{0}}\subset \mathbb{N}$ which diverges to $\infty$, and a sequence $\{\delta_q\}_{q\in \mathbb{N}}\subset(0,1)$  which is decreasing to $0$. We choose $a\in\mathbb{N},$ $ b\in\mathbb{N},$ $  \beta\in (0,1)$ and let
$$\lambda_q=a^{(b^q)}, \quad \delta_q=\lambda_1^{2\beta}\lambda_q^{-2\beta},$$
where $\beta$ will be chosen sufficiently small and $a$ as well as $b$ will be chosen sufficiently large. More details on the choice of these parameters will be given below in the course of the construction.

Using the  factorization method it is standard to derive  regularity of the stochastic convolution $z$ which solves the linear equation \eqref{linear} on $(\Omega, \mathcal{F},(\mathcal{F}_{t})_{t\geq0},\mathbf{P})$.  In particular, the following result follows from \cite[Theorem 5.14]{DPZ92} together with the Kolmogorov continuity criterion.

\bp\label{fe z}
	Suppose that $\mathrm{Tr}(GG^*)<\infty$ for some $\sigma>0$. Then for all $\delta\in (0,1)$ and $T>0$
	$$
	\E^{\mathbf{P}}\left[\|z\|_{C_{T}H^{1-\delta}}+\|z\|_{C_T^{1/2-\delta}L^2}\right]<\infty.
	$$
	
\ep

By the Sobolev embedding we know that $\|f\|_{L^\infty}\leq C_S\|f\|_{H^{\frac{3+\sigma}{2}}}$ for $\sigma>0$ and some constant $C_{S}\geq1$.
We fix   $0<\delta<1/12$ and define
\begin{equation}\label{stopping time}
\aligned
\mathfrak{t}:=&\inf\{t\geq0, \|z(t)\|_{H^{1-\delta}}\geq 1/ C_S\}\wedge \inf\{t\geq0,\|z\|_{C_t^{1/2-2\delta}L^2}\geq 1/C_S\}
\\&\qquad\wedge \inf\{t\geq0,\|z(t)\|_{L^2}\geq a^{\beta b-b^2\beta}/\sqrt{12}\}\wedge 1.
\endaligned
\end{equation}
Then $\mathfrak{t}$  is $\mathbf{P}$-a.s. strictly positive stopping time.
Moreover, for  $t\in[0, \mathfrak{t}]$ we have
\begin{equation}\label{z}
\| z_q(t)\|_{L^\infty}\leq \lambda_{q+1}^{\frac\alpha8},\quad \| z_q(t)\|_{L^2}\leq a^{\beta b-b^2\beta}/\sqrt{12}, \quad\|\nabla z_q(t)\|_{L^\infty}\leq \lambda_{q+1}^{\frac\alpha4}, \quad \|z_{q}\|_{C_t^{\frac{1}{2}-2\delta}L^\infty}\leq \lambda_{q+1}^{\frac\alpha4}.
\end{equation}

To handle the mollification around $t=0$ needed in the convex integration below, we require that the iterative equation \eqref{induction} is satisfied also for some negative times, namely, it holds on an interval $[t_{q},\mathfrak{t}]$ for certain $t_{q}<0$. More precisely, we let $t_q:=-2+\sum_{1\leq r\leq q}\delta_r^{1/2}$. To this end, we assume $\sum_{r\geq 1}\delta_{r}^{1/2}\leq \sum_{r\geq1}a^{b\beta-rb\beta}=\frac{1}{1-a^{-\beta b}}\leq2$ which boils down  to
\begin{equation}\label{aaa}
a^{\beta b}\geq 2
\end{equation}
assumed from now on.
We introduce the following norms for  and $p\in [1,\infty]$
\begin{align*}
	\|f\|_{C_{t,x,q}^N}:=\|f\|_{C^N_{[t_q,t],x}},\quad \|f\|_{C_{t,q}L^p}:=\|f\|_{C_{[t_q,t]}L^p}.
\end{align*}

Recall that $e:[0,1]\to [\underline{e},\infty)$ with $\underline{e}\geq 4$, $\|e\|_{C^{0}}\leq \bar e $ and $\|e'\|_{C^{0}}\leq \tilde e$ is the given energy. We also extend $e$ and $z$ to $[-2,0]$ by setting $e(t)=e(0), z(t)=z(0)$ for $t\in [-2,0]$.
By induction on $q$ we  assume the following bounds for the iterations $(v_q,\mathring{R}_q)$: if $t\in[{t_q}, \mathfrak{t}]$ then
\begin{equation}\label{inductionv}
\aligned
\|v_q\|_{{C_{t,q}}L^2}&\leq M_0(1+\sum_{1\leq r\leq q}\delta_{r}^{1/2}){\bar e}^{1/2}\leq 3M_0 {\bar e}^{1/2} ,
\\ \|v_q\|_{{C^1_{t,x,q}}}&\leq \lambda_q^4{\bar e}^{1/2},
\\
\|\mathring{R}_q\|_{{C_{t,q}}L^1}&\leq \frac{1}{48}\delta_{q+2}e(t).
\endaligned
\end{equation}
Here, we defined $\sum_{1\leq r\leq 0}:=0$ and  $M_0\geq 1$ is a   universal constant given below.
In this iterative assumption, we have  intentionally chosen $\bar e$ in the first two bounds and the point evaluation at $e(t)$ in the last bound. The reason for this will become apparent in Section~\ref{s:energy} where we control the error of the energy.

In addition, we assume that the energy of the iterations $v_{q}+z_{q}$ gradually approaches the given energy $e$. More precisely, if $t\in[{t_q}, \mathfrak{t}]$ then we require
\begin{equation}\label{inductionve}
\frac{3}{4}\delta_{q+1} e(t)\leq  e(t)-\|(v_q+{z_q})(t)\|_{L^2}^2\leq \frac{5}{4}\delta_{q+1}  e(t).
\end{equation}
Under the condition \eqref{aaa} we obtain  in particular that $a^{\beta b-b^2\beta}/\sqrt{12}\leq 1$ which by the definition of the stopping time implies for $t\in[0,\mathfrak{t}]$ that $\|z_{q}(t)\|_{L^{2}}\leq\|z(t)\|_{L^{2}}\leq 1$. We start the iteration off from $q=0$, $v_0\equiv0$ and $\mathring{R}_{0}=z_{0}\mathring{\otimes} z_{0}$ on $[-2,\mathfrak{t}]$. Using $\delta_1=1$, \eqref{inductionv} and \eqref{inductionve} leads to the condition
\begin{equation}\label{c:1}
\frac{3}{4}e(t)\leq e(t)-1\leq \frac{5}{4}e(t),
\end{equation}
and for $t\in [-2,\mathfrak{t}]$
\begin{align}\label{c:2}
\|\mathring{R}_0(t)\|_{L^1}\leq \frac1{12}a^{2\beta b-2b^2\beta}\leq \frac1{48}a^{2\beta b-2b^2\beta}\underline{e},
\end{align}
i.e. we require
\begin{align}\label{c:3}
e(t)\geq \underline{e}\geq 4.
\end{align}

The main ingredient in the proof of Theorem \ref{Main results1} is the following iteration.

\bp\label{main iteration}
There exists a choice of parameters $a,b,\beta$ such that the following holds true: If for some $q\in\N_{0}$
a process  $(v_q,\mathring{R}_q)$ is an $(\mathcal{F}_t)_{t\geq0}$-adapted solution to \eqref{induction} on $[t_{q},\mathfrak{t}]$ satisfying \eqref{inductionv} and \eqref{inductionve}, then
  there exists an $(\mathcal{F}_t)_{t\geq0}$-adapted process
	$(v_{q+1},\mathring{R}_{q+1})$ which solves \eqref{induction} on $[t_{q+1},\mathfrak{t}]$, obeys \eqref{inductionv} and \eqref{inductionve} at level $q+1$ and  for $t\in[t_{q+1},\mathfrak{t}]$ we have
	\begin{equation}\label{iteration}
	\|v_{q+1}(t)-v_q(t)\|_{L^2}\leq M_0 \bar{e}^{1/2}\delta_{q+1}^{1/2},
	\end{equation}
	for some universal constant $M_{0}$.
	Furthermore, if $(v_q,\mathring{R}_q)(t)$ is deterministic for $t\in [t_q,0]$, so is $(v_{q+1},\mathring{R}_{q+1})(t)$ for $t\in [t_{q+1},0]$.
\ep

We present the  proof of  Proposition \ref{main iteration}  in Section \ref{ss:it} below. Based on   Proposition \ref{main iteration} we may  proceed with the proof of Theorem \ref{Main results1}.

\begin{proof}[Proof of Theorem \ref{Main results1}]

	Starting from $(v_0,\mathring{R}_0)=(0,{z_{0}\mathring\otimes z_{0}})$,  Proposition \ref{main iteration} yields a sequence $(v_q, \mathring{R}_q)$ satisfying (\ref{inductionv}), \eqref{inductionve} and (\ref{iteration}).
	By interpolation we deduce for $\gamma\in (0,\frac{\beta}{4+\beta})$, $t\in [0,\mathfrak{t}]$
	\begin{equation*}
	\sum_{q\geq0}\|v_{q+1}(t)-v_q(t)\|_{H^{\gamma}}\lesssim \sum_{q\geq0}\|v_{q+1}(t)-v_q(t)\|_{L^2}^{1-\gamma}\|v_{q+1}(t)-v_q(t)\|_{H^1}^{\gamma}\lesssim \bar{e}^{1/2}\sum_{q\geq0}\delta_{q+1}^{\frac{1-\gamma}{2}}\lambda_{q+1}^{4\gamma}\lesssim \bar{e}^{1/2}.
	\end{equation*}
As a consequence, a limit $v=\lim_{q\rightarrow\infty}v_q$ exists and lies in $C([0,\mathfrak{t}],H^{\gamma})$.  Since $v_q$ is $(\mathcal{F}_t)_{t\geq0}$-adapted for every $q\in\mathbb{N}_{0}$, the limit
	$v$ is $(\mathcal{F}_t)_{t\geq0}$-adapted as well.
	Furthermore, it follows from the third estimate in \eqref{inductionv} that $\lim_{q\rightarrow\infty}\mathring{R}_q=0$ in $C([0,\mathfrak{t}];L^1)$ {and $\lim_{q\rightarrow\infty}z_q=z$ in $C([0,\mathfrak{t}];L^2)$}. Thus $v$ is an analytically  weak solution to (\ref{nonlinear}). In addition, there exists a deterministic constant $c_{2}$ such that
	\begin{equation}\label{eq:vvv}
	\|v(t)\|_{H^{\gamma}}\leq c_2
	\end{equation}
	holds true for all $t\in[0,\mathfrak{t}]$. Hence letting $u=v+z$ we obtain an $(\mathcal{F}_{t})_{t\geq 0}$-adapted analytically weak solution to \eqref{1} satisfying \eqref{eq:vv}.

	Finally,
by taking the limit in \eqref{inductionve} we deduce the desired energy equality \eqref{eq:env}.
The last claim of the theorem follows from the construction in Section \ref{ss:it}.  Indeed, we find that the value of $v(t)$ for a given $t\in[0,\mathfrak{t}]$ is determined by the value of $v_{q+1}(t)$ at each step. Due to the (one-sided) mollification in time, each iteration $v_{q+1}(t)$ in turn depends on the following objects on the full time interval $[0,t]$: the stochastic convolution $z$, the previous iteration $v_{q}$, the previous stress $\mathring{R}_{q}$ and the energy $e$. As a consequence, if two energies $e_{1},$ $e_{2}$ are as in the statement of the theorem, i.e.  have the same $\underline{e},$ $\bar{e}$ and $\tilde e$ and  coincide on $[0,t]$ then the corresponding  solutions $u_{1}$, $u_{2}$ coincide on $[0,t\wedge\mathfrak{t}]$ as well.
\end{proof}

\subsection{Proof of Proposition \ref{main iteration}}\label{ss:it}
	
	\subsubsection{Choice of  parameters}\label{s:c}
	
	In the sequel, additional parameters will be indispensable and their value has to be carefully chosen in order to respect all the compatibility conditions appearing in the estimations below. First, for a sufficiently small  $\alpha\in (0,1)$ to be chosen below, we let $\ell\in (0,1)$ be  a small parameter  satisfying
	\begin{equation}\label{ell}
	\ell \lambda_q^4\leq \lambda_{q+1}^{-\alpha},\quad \ell^{-1}\leq \lambda_{q+1}^{2\alpha},\quad \bar e\leq \ell^{-1}.
	\end{equation}
	In particular, we define
	\begin{equation}\label{ell1}
	\ell:=\lambda_{q+1}^{-\frac{3\alpha}{2}}\lambda_q^{-2}.
	\end{equation}
	Combined with \eqref{c:3} we need $\alpha b>4$ and
	 \begin{equation}\label{aaa2}
	   4 \leq\underline{e}\leq \bar e\leq a^{\frac32b\alpha+2},
	   \end{equation}
	 which can be obtained by choosing $a$ large enough.	
	
	In the sequel, we  use the following bounds
	$$
	\alpha>{18}\beta b^2,\quad \frac17-50\alpha>2\beta b^2,\quad \frac17-160\alpha>2\beta b,
	$$
	which can be obtained by choosing $\alpha$ small  such that
	$\frac17-160\alpha>\alpha,$
	and choosing $b\in {7\cdot 8}\mathbb{N}$ large enough such that
	{$\alpha b\in 8\mN$} and finally choosing $\beta$ small such that
	$\alpha>{18}\beta b^2$. We also need
	$$M_0\lambda_{q+1}^{-\frac\alpha8\cdot \frac{11}{12}}\bar{e}\leq \frac{1}{10}\lambda_{q+1}^{-2b^2\beta}, $$
	i.e.
		\begin{equation}\label{eq:aaa20}
		10M_0\bar{e}\leq \lambda_{1}^{\frac{11}{12}\cdot\frac18\alpha-2b^2\beta},
		\end{equation}
	which can be obtained by choosing $a$ large enough.  Various estimates of this form are needed for the final  control the new stress $\mathring{R}_{q+1}$: the right hand side is  given by $\lambda_{q+1}^{-2\beta b^2}$ and the left hand side is typically given by an expression of the form $\lambda_{q+1}^{{50}\alpha-\frac17}$. Hence, we shall choose $\alpha$ small first and $b$ large, then $\beta$  small enough. The last free parameter is $a$ which satisfies the lower bounds given through \eqref{aaa}, \eqref{aaa2}, \eqref{eq:aaa20}. In the sequel, we increase $a$ in order to absorb various implicit and universal constants.

	\subsubsection{Mollification}\label{s:p}
	
	We intend to replace $v_q$ by a mollified velocity field $v_\ell$. To this end, let $\{\phi_\varepsilon\}_{\varepsilon>0}$ be a family of standard mollifiers on $\mathbb{R}^3$, and let $\{\varphi_\varepsilon\}_{\varepsilon>0}$ be a family of  standard mollifiers with support in $(0,1)$. The one sided mollifier here is used in order to preserve adaptedness. We define a mollification of $v_q$, $\mathring{R}_q$ and $z$ in space and time by convolution as follows
	$$v_\ell=(v_q*_x\phi_\ell)*_t\varphi_\ell,\qquad
	\mathring{R}_\ell=(\mathring{R}_q*_x\phi_\ell)*_t\varphi_\ell,\qquad
	z_\ell=({z_q}*_x\phi_\ell)*_t\varphi_\ell,$$
	where $\phi_\ell=\frac{1}{\ell^3}\phi(\frac{\cdot}{\ell})$ and $\varphi_\ell=\frac{1}{\ell}\varphi(\frac{\cdot}{\ell})$.
	Since the mollifier $\varphi_\ell$ is supported on $(0,1)$, it is easy to see that $z_\ell$ is $(\mathcal{F}_t)_{t\geq0}$-adapted and so are $v_\ell$ and $\mathring{R}_\ell$. As $\ell\leq \delta_{q+1}^{1/2}$, if $v_q(t), \mathring{R}_q(t)$ are deterministic for $t\in [t_q,0]$, so are $v_\ell(t)$ and $\mathring{R}_\ell(t), \partial_t\mathring{R}_\ell(t)$ for $t\in [t_{q+1},0]$. Moreover, $z(t)=0$ for $t\in [-2,0]$ implies that $z_\ell(t)$ as well as ${R}_{\mathrm{com}}(t)$ on $t\in [t_{q+1},0]$ given below is deterministic as well.
	
	Since (\ref{induction}) holds  on $[t_{q},\mathfrak{t}]$, it follows that $(v_\ell,\mathring{R}_\ell)$ satisfies on $[t_{q+1},\mathfrak t]$
	\begin{equation}\label{mollification}
	\aligned
	\partial_tv_\ell -\Delta v_\ell+\div((v_\ell+z_\ell)\otimes (v_\ell+z_\ell))+\nabla p_\ell&=\div (\mathring{R}_\ell+R_{\textrm{com}})
	\\\div v_\ell&=0,
	\endaligned
	\end{equation}
	where
	\begin{equation*}
	R_{\textrm{com}}=(v_\ell+z_\ell)\mathring{\otimes}(v_\ell+z_\ell)-((v_q+{z_q})\mathring{\otimes}(v_q+{z_q}))*_x\phi_\ell*_t\varphi_\ell,
	\end{equation*}
	\begin{equation*}
	p_\ell=(p_q*_x\phi_\ell)*_t\varphi_\ell-\frac{1}{3}\big(|v_\ell+z_\ell|^2-(|v_q+{z_q}|^2*_x\phi_\ell)*_t\varphi_\ell\big).
	\end{equation*}

	In view of  (\ref{inductionv})  we obtain for $t\in[t_{q+1},\mathfrak t]$
	\begin{equation}\label{error}
	\|v_q-v_\ell\|_{C_{t,q+1}L^2}\lesssim\|v_q-v_\ell\|_{C^0_{t,x,q+1}}\lesssim \ell\|v_q\|_{C^1_{t,x,q}}\leq \ell\lambda_q^4\bar {e}^{1/2} \leq  \lambda_{q+1}^{-\alpha}\bar {e}^{1/2}\leq \frac{1}{4} \delta_{q+1}^{1/2}\bar {e}^{1/2},
	\end{equation}
	where we  used the fact that $\alpha>\beta$ and we  chose $a$ large enough in order to absorb the implicit constant.
	In addition, it holds for $t\in[t_{q+1}, \mathfrak{t}]$
\begin{equation}\label{eq:vl}
	\|v_\ell\|_{C_{t,q+1}L^2}\leq \|v_q\|_{C_{t,q}L^2}\leq   M_0(1+\sum_{1\leq r\leq q}\delta_{r}^{1/2})\bar {e}^{1/2},
	\end{equation}
	and for $N\geq1$
\begin{equation}\label{eq:vl2}
	\|v_\ell\|_{C^N_{t,x,q+1}}\lesssim \ell^{-N+1}\|v_q\|_{C^1_{t,x,q}}\leq \ell^{-N+1}\lambda_q^4\bar{e}^{1/2} \leq  \ell^{-N}\lambda_{q+1}^{-\alpha}\bar{e}^{1/2},
	\end{equation}
	where we chose $a$ large enough to absorb the implicit constant.

\subsubsection{Construction of  $v_{q+1}$}\label{s:con}

Let us now proceed with the construction of the perturbation $w_{q+1}$ which then defines the next iteration by $v_{q+1}:=v_{\ell}+w_{q+1}$.
To this end, we employ  the intermittent jets introduced in \cite{BCV18} and presented in \cite[Section 7.4]{BV19}, which we recall in Appendix~\ref{s:B}. In particular, the building blocks $W_{(\xi)}=W_{\xi,r_\perp,r_\|,\lambda,\mu}$ for $\xi\in\Lambda$ are defined in (\ref{intermittent}) and the set $\Lambda$ is introduced in Lemma \ref{geometric}.
The necessary estimates are collected  in \eqref{bounds}.  We choose the following parameters
\begin{equation}\label{parameter}
\aligned
\lambda&=\lambda_{q+1},
\qquad
r_\|=\lambda_{q+1}^{-4/7},
\qquad r_\perp=r_\|^{-1/4}\lambda_{q+1}^{-1}=\lambda_{q+1}^{-6/7},
\qquad
\mu=\lambda_{q+1}r_\|r_\perp^{-1}=\lambda_{q+1}^{9/7}.
\endaligned
\end{equation}
It is required that $b$ is a multiple of $7$ to ensure that $\lambda_{q+1}r_\perp= a^{(b^{q+1})/7}\in\mathbb{N}$.

As the next step, we shall  define certain amplitude functions used in the definition of the perturbations $w_{q+1}$. This is where one of the main differences  compared to  \cite{HZZ19} lies and we follow the ideas of \cite{BMS20}. In particular, we shift and normalize $\mathring{R}_\ell$ by
$$\Id -\frac{\mathring{R}_\ell}{\rho},$$
with
$$\rho:=2\sqrt{\ell^2+|\mathring{R}_\ell|^2}+\gamma_\ell,\quad t\in [t_{q+1},\mathfrak t],$$
$$\gamma_q(t):=\frac1{3\cdot (2\pi)^3} \Big[ e(t)(1-\delta_{q+2})-\|v_q(t)+{z_q}(t)\|_{L^2}^2\Big],\quad t\in [t_q,\mathfrak{t}],$$
and
$$ \gamma_\ell:=\gamma_q*_t\varphi_\ell,\quad t\in [t_{q+1},\mathfrak t].$$
The function $\gamma_{q}$ is used in order to pump energy into the system and  permits to achieve \eqref{inductionve}. We observe that  \eqref{aaa} and $b\geq 2$ implies in particular
$
\frac43\leq a^{2\beta b(b-1)},
$
i.e. $\frac34\delta_{q+1}\geq \delta_{q+2}$, it follows that $\gamma_{q}\geq 0$.
Since the mollifier $\varphi_\ell$ is supported on $\mR^+$ it follows that $\rho$ is $(\mathcal{F}_t)_{t\geq0}$-adapted.
As a consequence of  $\gamma_q\geq0$ we know $\rho\geq 2|\mathring{R}_\ell|$ and hence
$$ \Big|\Id-\frac{\mathring{R}_\ell}{\rho}-\Id\Big|\leq \frac12.$$

Note that since $\mathring{R}_\ell(t,x), \partial_t\mathring{R}_\ell(t,x)$  and $\gamma_\ell(t), \p_t\gamma_\ell(t)$ are deterministic for $t\in [t_{q+1},0]$, so are $\rho$ and $\partial_t\rho$. In the following we consider all the norms are on $[t_{q+1},t]$ and we omit the subscript $q+1$ if there is no danger of confusion.
Moreover, we use \eqref{inductionve} to obtain for any $p\in [1,\infty]$, $t\in [t_{q+1},\mathfrak{t}]$
\begin{equation}\label{rho}
\|\rho\|_{C_tL^p}\leq 2\ell(2\pi)^{3/p}+2\|\mathring{R}_\ell\|_{C_tL^p}+\frac12\delta_{q+1}\bar {e}.
\end{equation}
Furthermore, by mollification estimates, the embedding $W^{4,1}\subset L^\infty$ and \eqref{inductionv} we obtain for $N\geq0$
$t\in[t_{q+1}, \mathfrak{t}]$
$$
\|\mathring{R}_\ell\|_{C^N_{t,x}}\lesssim \ell^{-4-N}\delta_{q+2}\bar{e},
$$
which in particular leads to
\begin{equation}\label{rho0}
\|\rho\|_{C^{0}_{t,x}}\lesssim \ell+\ell^{-4}\delta_{q+2}\bar {e}+\delta_{q+1}\bar {e}\lesssim \ell^{-4}\delta_{q+1}\bar{e},
\end{equation}
by using $\ell<\delta_{q+1}\bar {e}$.

As the next step, we aim at estimating the $C^{N}_{t,x}$-norm of $\rho$ for $N\in\mathbb{N}$. To this end, we first apply  the chain rule  \cite[Proposition C.1]{BDLIS16} to the function $\Psi(z)=\sqrt{\ell^{2}+z^{2}}$, $|D^{m}\Psi(z)|\lesssim \ell^{-m+1}$ to  obtain 
	$$
	\left\| \sqrt{\ell^2 + | \mathring{R}_\ell |^2} \right\|_{C^N_{t,x}} \lesssim \left\| \sqrt{\ell^2 + | \mathring{R}_\ell |^2} \right\|_{C^{0}_{t,x}}+\|D \Psi \|_{C^{0}}
	\| \mathring{R}_\ell \|_{C^N_{t,x}} + \| D\Psi \|_{C^{N-1}} \|
	\mathring{R}_\ell \|^N_{C^1_{t,x}} $$
	$$
	 \lesssim \ell^{- 4 - N} \delta_{q + 2} \bar{e} + \ell^{- N + 1}
	\ell^{(- 4 - 1) N} \delta_{q + 2} \bar{e}^N. $$
Then we apply  $\bar{e}\leq \ell^{-1}$ and deduce for $N\geq1$
\begin{equation}\label{rhoN}
\aligned
\|\rho\|_{C^N_{t,x}}&\lesssim
	\left\| \sqrt{\ell^2 + | \mathring{R}_\ell |^2} \right\|_{C^N_{t,x}}+\|\gamma_{\ell}\|_{C^{N}_{t}}\\
	&\lesssim (\ell^{- 4 - N} + \ell^{- N + 1} \ell^{- 5 N} \ell^{- N + 1})
	\delta_{q + 2} \bar{e} +\frac12\ell^{-N}\delta_{q+1}\bar {e} \lesssim
	\ell^{2 - 7 N} \delta_{q + 1} \bar {e},
\endaligned
\end{equation}
where we used the fact that $2-7N\leq-4-N$ for $N\geq1$.

Now, we define the amplitude functions
\begin{equation}\label{amplitudes}a_{(\xi)}(\omega,t,x):=a_{\xi,q+1}(\omega,t,x):=\rho(\omega,t,x)^{1/2}\gamma_\xi\left(\Id
-\frac{\mathring{R}_\ell(\omega,t,x)}{\rho(\omega,t,x)}\right)(2\pi)^{-\frac{3}{4}},\end{equation}
where $\gamma_\xi$ is introduced in  Lemma \ref{geometric}. Since $\rho$ and $\mathring{R}_\ell$ are $(\mathcal{F}_t)_{t\geq0}$-adapted, we know that also $a_{(\xi)}$ is $(\mathcal{F}_t)_{t\geq0}$-adapted. If $\mathring{R}_\ell(t,x), \partial_t\mathring{R}_\ell(t,x)$, and $\rho(t,x), \p_t\rho(t,x)$ are deterministic for $t\in [t_{q+1},0]$, so are $a_{(\xi)}(t,x)$ and $\partial_ta_{(\xi)}( t,x)$ for $t\in [t_{q+1},0]$.
By  (\ref{geometric equality}) we have
\begin{equation}\label{cancellation}(2\pi)^{-\frac{3}{2}}\sum_{\xi\in\Lambda}a_{(\xi)}^2\int_{\mathbb{T}^3}W_{(\xi)}\otimes W_{(\xi)}\dif x=\rho \Id-\mathring{R}_\ell,
\end{equation}
and using (\ref{rho}) for $t\in[t_{q+1}, \mathfrak{t}]$
\begin{equation}\label{estimate a}
\begin{aligned}
\|a_{(\xi)}\|_{C_tL^2}&\leq \|\rho\|_{C_tL^1}^{1/2}\|\gamma_\xi\|_{C^0(B_{1/2}(\Id))}\leq \frac{M}{8|\Lambda|(1+8\pi^{3})^{1/2}}\left(2\ell (2\pi)^3+\frac{2}{48}\delta_{q+2}\bar {e}+\frac{1}2 \delta_{q+1}\bar {e}\right)^{1/2}\\
&\leq\frac{M}{4|\Lambda|}\bar{e}^{1/2}\delta_{q+1}^{1/2},
\end{aligned}
\end{equation}
where  we use $2\ell\leq\delta_{q+1}\bar {e}/2$ and  $M$ denotes the universal constant from Lemma~\ref{geometric}.

Let us now estimate the $C^{N}_{t,x}$-norm of $a_{(\xi)}$. By Leibniz rule, we get
\begin{equation}\label{aa}
\|a_{(\xi)}\|_{C^{N}_{t,x}}\lesssim\sum_{m=0}^N\|\rho^{\frac12}\|_{C^m_{t,x}}\left\|\gamma_{\xi}\left(\Id
-\frac{\mathring{R}_\ell}{\rho}\right)\right\|_{C^{N-m}_{t,x}}
\end{equation}
and estimate each norm separately.
First, by \eqref{rho0}
\begin{equation*}
\| \rho^{1 / 2} \|_{C^0_{t,x}} \lesssim \ell^{-2}\delta_{q+1}^{1/2}\bar {e}^{1/2},
\end{equation*}
and by Lemma~\ref{geometric}
$$
\left\|\gamma_{\xi}\left(\Id
-\frac{\mathring{R}_\ell}{\rho}\right)\right\|_{C^{0}_{t,x}}\lesssim 1.
$$
Second, applying \cite[Proposition C.1]{BDLIS16} to the function
	$
	 \Psi (z) = z^{1 / 2},$ $  | D^{m}\Psi(z) | \lesssim |z|^{1 / 2 - m}, $ for $m=1,\dots, N$, and using \eqref{rhoN} and $\rho\geq \ell$ we obtain 
	\begin{equation}\label{eq:rho12}
	\begin{aligned}
	 \| \rho^{1 / 2} \|_{C^m_{t,x}} &\lesssim \| \rho^{1 / 2} \|_{C^0_{t,x}}+\ell^{- 1 / 2} \|\rho\|_{C^{m}_{t,x}} + \ell^{1 / 2 -
		m} \|\rho\|_{C^{1}_{t,x}}^m\\
		& \lesssim \ell^{-2}\delta_{q+1}^{1/2}\bar {e}^{1/2}+\ell^{- 1 / 2} \ell^{2 - 7 m} \delta_{q + 1} \bar {e} +
	\ell^{1 / 2 - m} \ell^{- 5 m} \delta^m_{q + 1} \bar {e}^m\\
	&\lesssim  \ell^{1-7m}\delta^{1/2}_{q + 1} \bar {e}^{1/2},
	\end{aligned}
	\end{equation}
where we used \eqref{ell}.
	
We proceed with a bound for $\left\|\gamma_{\xi}\left(\Id
-\frac{\mathring{R}_\ell}{\rho}\right)\right\|_{C^{N-m}_{t,x}}$ for $m=0,\dots,N-1$. Keeping \cite[Proposition C.1]{BDLIS16} as well as Lemma~\ref{geometric} in mind, we need to estimate
	\begin{equation}\label{gamma}
	 \left\| \frac{\mathring{R}_\ell}{\rho} \right\|_{C^{N - m}_{t,x}} + \left\| \frac{\nabla_{t,x}\mathring{R}_\ell}{\rho}
	\right\|_{C^0_{t,x}}^{N - m} + \left\| \frac{\mathring{R}_\ell}{\rho^{2}} \right\|_{C^0_{t,x}}^{N - m} \|
	\rho \|_{C^1_{t,x}}^{N - m} .
	\end{equation}
	We use $\rho\geq \ell$ to have
	$$ \left\| \frac{\nabla_{t,x}\mathring{R}_\ell}{\rho} \right\|_{C^0_{t,x}}^{N - m} \lesssim \ell^{- (N - m)}
	\ell^{(- 4 - 1) (N - m)} \delta^{N - m}_{q + 2} \bar {e}^{N - m} \lesssim \ell^{-
		7 (N - m)} , $$
	and	in view of $|\frac{\mathring{R}_\ell}{\rho}|\leq 1$
	$$ \left\| \frac{\mathring{R}_\ell}{\rho^2} \right\|_{C^0_{t,x}}^{N - m} \lesssim \left\| \frac{1}{\rho} \right\|_{C^0_{t,x}}^{N - m}\lesssim \ell^{-(N-m)},$$
and by \eqref{rhoN} and $\bar{e}\leq \ell^{-1}$
$$ \| \rho \|_{C^1_{t,x}}^{N - m} \lesssim \ell^{-5 (N - m)} \delta^{N -
		m}_{q + 1} \bar{e}^{N - m} \lesssim \ell^{- 6 (N - m)}. $$

To estimate the  first term in \eqref{gamma}, we write		
	\begin{equation}\label{RR}
	 \left\| \frac{\mathring{R}_\ell}{\rho} \right\|_{C^{N - m}_{t,x}} \lesssim \sum_{k = 0}^{N - m}
	\| \mathring{R}_\ell \|_{C^k_{t,x}} \left\| \frac{1}{\rho} \right\|_{C^{N - m - k}_{t,x}},
	\end{equation}
	where for $N-m-k=0$ we have
	$$
	\left\| \frac{1}{\rho} \right\|_{C^{0}_{t,x}} \lesssim \ell^{-1}
	$$
	and for $k=0,\dots,N-m-1$ using \eqref{rhoN} and $\bar{e}\leq \ell^{-1}$
	$$
	 \left\| \frac{1}{\rho} \right\|_{C^{N - m - k}_{t,x}} \lesssim \left\|\frac1\rho\right\|_{C^{0}_{t,x}}+\ell^{- 2} \| \rho
	\|_{C^{N - m - k}_{t,x}} + \ell^{- (N - m - k){-1}} \| \rho \|_{C^1_{t,x}}^{N - m - k} $$
	$$
	 \lesssim \ell^{- 2} \ell^{{2 - 7 (N - m - k)-1}} + \ell^{- (N - m - k)-1}
	\ell^{- 5 (N - m - k)} \delta_{q + 1}^{N - m - k} \bar {e}^{N - m - k}
	 \lesssim {\ell^{-1}}\ell^{ - 7 (N - m - k)}. $$
	Altogether, we therefore bound \eqref{RR} as
	\begin{equation}\label{RR2}
	\begin{aligned}
	\left\| \frac{\mathring{R}_\ell}{\rho} \right\|_{C^{N - m}_{t,x}} &\lesssim \sum_{k = 0}^{N - m-1} \ell^{-5-k}\ell^{-7(N-m-k){-1}}+ \ell^{-5-(N-m)}\ell^{-1} \\
	&\lesssim \ell^{-6-7(N-m)}+\ell^{-6-(N-m)}\lesssim \ell^{-{6}-7(N-m)}.
	\end{aligned}
	\end{equation}

Finally, plugging \eqref{RR2}, and the other bounds into  \eqref{gamma} leads to
$$
\left\|\gamma_{\xi}\left(\Id
-\frac{\mathring{R}_\ell}{\rho}\right)\right\|_{C^{N-m}_{t,x}}\lesssim \ell^{-6-7(N-m)} + \ell^{-7(N-m)}\lesssim \ell^{-6-7(N-m)}.
$$
Combining this
with the bounds for $\rho^{1/2}$ above and plugging into \eqref{aa} yields for $N\in\mathbb{N}$
\begin{equation}\label{estimate aN}
\begin{aligned}
\|a_{(\xi)}\|_{C^N_{t,x}}& \lesssim \left(\ell^{-2}\ell^{-6-7N}+\sum^{N-1}_{m=1}\ell^{1-7m}\ell^{-6-7(N-m)}+\ell^{1-7N}\right)\delta_{q+1}^{1/2}\bar {e}^{1/2}\\
&\lesssim \ell^{-8-7N}\delta_{q+1}^{1/2}\bar{e}^{1/2},
\end{aligned}
\end{equation}
where the final bound is also valid for $N=0$.

\medskip

With these preparations in hand,  we define the principal part $w_{q+1}^{(p)}$ of the perturbation $w_{q+1}$ as
\begin{equation}\label{principle}
w_{q+1}^{(p)}:=\sum_{\xi\in\Lambda} a_{(\xi)}W_{(\xi)}.
\end{equation}
If $\mathring{R}_\ell(t,x), \partial_t\mathring{R}_\ell(t,x)$ are deterministic for $t\in [t_{q+1},0]$, so are $w_{q+1}^{(p)}$ and $\partial_tw_{q+1}^{(p)}$.
Since the coefficients $a_{(\xi)}$ are $(\mathcal{F}_t)_{t\geq0}$-adapted and $W_{(\xi)}$ is a deterministic function we deduce that
$w_{q+1}^{(p)}$ is also $(\mathcal{F}_t)_{t\geq0}$-adapted.
Moreover, according to (\ref{cancellation}) and (\ref{Wxi}) it follows that
\begin{equation}\label{can}w_{q+1}^{(p)}\otimes w_{q+1}^{(p)}+\mathring{R}_\ell=\sum_{\xi\in \Lambda}a_{(\xi)}^2 \mathbb{P}_{\neq0}(W_{(\xi)}\otimes W_{(\xi)})+\rho \Id,
\end{equation}
where we use the notation $\mathbb{P}_{\neq0}f:=f-\mathcal{F}f(0)$.

We also define the incompressibility corrector by
\begin{equation}\label{incompressiblity}
w_{q+1}^{(c)}:=\sum_{\xi\in \Lambda}\textrm{curl}(\nabla a_{(\xi)}\times V_{(\xi)})+\nabla a_{(\xi)}\times \textrm{curl}V_{(\xi)}+a_{(\xi)}W_{(\xi)}^{(c)},\end{equation}
with $W_{(\xi)}^{(c)}$ and $V_{(\xi)}$ being given in (\ref{corrector}).
Since $a_{(\xi)}$ is $(\mathcal{F}_t)_{t\geq0}$-adapted and $W_{(\xi)}, W_{(\xi)}^{(c)}$ and $V_{(\xi)}$ are  deterministic it follows that
$w_{q+1}^{(c)}$ is also $(\mathcal{F}_t)_{t\geq0}$-adapted. If $a_{(\xi)}(t,x), \partial_ta_{(\xi)}(t,x)$ are deterministic for $t\in [t_{q+1},0]$, so are $w_{q+1}^{(c)}$ and $\partial_tw_{q+1}^{(c)}$.
By a direct computation we deduce that
\begin{equation*}
w_{q+1}^{(p)}+w_{q+1}^{(c)}=\sum_{\xi\in\Lambda}\textrm{curl}\,\textrm{curl}(a_{(\xi)}V_{(\xi)}),
\end{equation*}
hence
\begin{equation*}\div(w_{q+1}^{(p)}+w_{q+1}^{(c)})=0.\end{equation*}
Next, we introduce the temporal corrector
\begin{equation}\label{temporal}w_{q+1}^{(t)}:=-\frac{1}{\mu}\sum_{\xi\in \Lambda}\mathbb{P}\mathbb{P}_{\neq0}\left(a_{(\xi)}^2\phi_{(\xi)}^2\psi_{(\xi)}^2\xi\right),\end{equation}
where $\mathbb{P}$ is the Helmholtz projection. If $a_{(\xi)}(t,x), \partial_ta_{(\xi)}(t,x)$ are deterministic for $t\in [t_{q+1},0]$, so is $w_{q+1}^{(t)}$.  Similarly as above, $w_{q+1}^{(t)}$ is $(\mathcal{F}_t)_{t\geq0}$-adapted and by a direct computation (see \cite[(7.20)]{BV19})  we obtain
\begin{equation}\label{equation for temporal}
\aligned
&\partial_t w_{q+1}^{(t)}+\sum_{\xi\in\Lambda}\mathbb{P}_{\neq0}\left(a_{(\xi)}^2\div(W_{(\xi)}\otimes W_{(\xi)})\right)
\\
&\qquad= -\frac{1}{\mu}\sum_{\xi\in\Lambda}\mathbb{P}\mathbb{P}_{\neq0}\partial_t\left(a_{(\xi)}^2\phi_{(\xi)}^2\psi_{(\xi)}^2\xi\right)
+\frac{1}{\mu}\sum_{\xi\in\Lambda}\mathbb{P}_{\neq0}\left( a^2_{(\xi)}\partial_t(\phi^2_{(\xi)}\psi^2_{(\xi)}\xi)\right)
\\&\qquad= (\Id-\mathbb{P})\frac{1}{\mu}\sum_{\xi\in\Lambda}\mathbb{P}_{\neq0}\partial_t\left(a_{(\xi)}^2\phi_{(\xi)}^2\psi_{(\xi)}^2\xi\right)
-\frac{1}{\mu}\sum_{\xi\in\Lambda}\mathbb{P}_{\neq0}\left(\partial_t a^2_{(\xi)}(\phi^2_{(\xi)}\psi^2_{(\xi)}\xi)\right).
\endaligned
\end{equation}
Note that the first term on the right hand side can be viewed as a pressure term $\nabla p_{1}$.

Finally, the total perturbation $w_{q+1}$ is defined by
\begin{equation}\label{wq}w_{q+1}:=w_{q+1}^{(p)}+w_{q+1}^{(c)}+w_{q+1}^{(t)},\end{equation}
which is mean zero, divergence free and $(\mathcal{F}_t)_{t\geq0}$-adapted. If $a_{(\xi)}(t,x), \partial_ta_{(\xi)}(t,x)$ are deterministic for $t\in[t_{q+1},0]$, so is $w_{q+1}$. The new velocity $v_{q+1}$ is defined as
\begin{equation}\label{vq}
v_{q+1}:=v_\ell+w_{q+1}.
\end{equation}
Thus, it is also $(\mathcal{F}_t)_{t\geq0}$-adapted. If $\mathring{R}_q(t,x), v_q(t,x)$ are deterministic for $t\in[t_{q},0]$, so is $v_{q+1}(t,x)$ for $t\in [t_{q+1},0]$.

\subsubsection{Inductive estimates for $v_{q+1}$}
\label{sss:v}
Next, we verify the inductive estimates (\ref{inductionv}) on the level $q+1$ for $v$ and we prove (\ref{iteration}).
First, we  recall the following result from \cite[Lemma~7.4]{BV19}.

\bl\label{lem:Lp}
	Fix integers $N, \kappa\geq1$ and let $\zeta>1$ be such that
	\begin{equation*}
	\frac{2\pi \sqrt{3}\zeta}{\kappa}\leq\frac{1}{3}\quad \textrm{ and } \quad\zeta^4\frac{(2\pi \sqrt{3}\zeta)^N}{\kappa^N}\leq1.
	\end{equation*}
	Let $p\in \{1,2\}$ and let $f$ be a $\mathbb{T}^3$-periodic function such that there exists a constant $C_f>0$ such that
	$$\|D^jf\|_{L^p}\leq C_f\zeta^j,$$
	holds for all $0\leq j\leq N+4$. In addition, let $g$ be a $(\mathbb{T}/\kappa)^3$-periodic function. Then it holds that
	$$\|fg\|_{L^p}\lesssim C_f\|g\|_{L^p},$$
	where the implicit constant is universal.
\el

This result is applied  to bound $w_{q+1}^{(p)}$ in $L^{2}$ whereas for the other $L^{p}$-norms we use a different approach.  By (\ref{estimate a}) and (\ref{estimate aN}) we obtain for $t\in[t_{q+1}, \mathfrak{t}]$
\begin{equation*}
\|D^ja_{(\xi)}\|_{C_tL^2}\lesssim \frac{M\bar {e}^{1/2}}{4|\Lambda|}\delta_{q+1}^{1/2}\ell^{-15j},
\end{equation*}
which combined with Lemma \ref{lem:Lp} for $\zeta=\ell^{-15}\leq\lambda_{q+1}^{30\alpha}$
implies that for $t\in[t_{q+1}, \mathfrak{t}]$
\begin{equation}\label{estimate wqp}
\|w_{q+1}^{(p)}\|_{C_tL^2}\lesssim \sum_{\xi\in\Lambda}\frac{1}{4|\Lambda|}M\bar {e}^{1/2}\delta_{q+1}^{1/2}\|W_{(\xi)}\|_{C_tL^2}\leq \frac{M_0}{2}\bar {e}^{1/2}\delta_{q+1}^{1/2},
\end{equation}
where we used  the fact that due to (\ref{intermittent}) together with the normalizations \eqref{eq:phi}, \eqref{eq:psi} it holds  $\|W_{(\xi)}\|_{L^2}\simeq 1$ uniformly in all the involved parameters, and we use $\zeta^5<\kappa=\lambda_{q+1}r_\perp$ by $150\alpha<\frac17$. Here, we may  choose $M_0=cM\geq1$ with a universal constant $c$.

For a general $L^p$-norm we apply (\ref{bounds}) and (\ref{estimate aN}) to deduce for $t\in[t_{q+1}, \mathfrak{t}]$, $p\in(1,\infty)$
\begin{equation}\label{principle est1}
\aligned
\|w_{q+1}^{(p)}\|_{C_tL^p}&\lesssim \sum_{\xi\in \Lambda}\|a_{(\xi)}\|_{C^0_{t,x}}\|W_{(\xi)}\|_{C_tL^p}\lesssim \bar {e}^{1/2}\delta_{q+1}^{1/2}\ell^{-8}r_\perp^{2/p-1}r_\|^{1/p-1/2},
\endaligned
\end{equation}
\begin{equation}\label{correction est}
\aligned
\|w_{q+1}^{(c)}\|_{C_tL^p}&\lesssim\sum_{\xi\in \Lambda}\left(\|a_{(\xi)}\|_{C^0_{t,x}}\|W_{(\xi)}^{(c)}\|_{C_tL^p}+\|a_{(\xi)}\|_{C^2_{t,x}}\|V_{(\xi)}\|_{C_tW^{1,p}}\right)
\\&\lesssim \bar {e}^{1/2}\delta_{q+1}^{1/2}\ell^{-22}r_\perp^{2/p-1}r_\|^{1/p-1/2}\left(r_\perp r_\|^{-1}+\lambda_{q+1}^{-1}\right)\lesssim \bar {e}^{1/2}\delta_{q+1}^{1/2}\ell^{-22}r_\perp^{2/p}r_\|^{1/p-3/2},
\endaligned
\end{equation}
and
\begin{equation}\label{temporal est1}
\aligned
\|w_{q+1}^{(t)}\|_{C_tL^p}&\lesssim \mu^{-1}\sum_{\xi\in\Lambda}\|a_{(\xi)}\|_{C^0_{t,x}}^2\|\phi_{(\xi)}\|_{L^{2p}}^2\|\psi_{(\xi)}\|_{C_tL^{2p}}^2
\\
&\lesssim\delta_{q+1}\bar {e} \ell^{-16}r_\perp^{2/p-1}r_\|^{1/p-2}(\mu^{-1}r_\perp^{-1}r_\|)= \bar {e}\delta_{q+1}\ell^{-16}r_\perp^{2/p-1}r_\|^{1/p-2}\lambda_{q+1}^{-1}.
\endaligned
\end{equation}
We note that for $p=2$ \eqref{principle est1} provides a worse bound than \eqref{estimate wqp} which was based on Lemma \ref{lem:Lp}.
Hence, we obtain
\begin{equation}\label{corr temporal}
\aligned
&\|w_{q+1}^{(c)}\|_{C_tL^p}+\|w_{q+1}^{(t)}\|_{C_tL^p}\\
&\quad\lesssim  \bar {e}^{1/2}\delta_{q+1}^{1/2}\ell^{-2}r_\perp^{2/p-1}r_\|^{1/p-1/2}\left(\ell^{-20}r_\perp r_\|^{-1}+\bar {e}^{1/2}\delta_{q+1}^{1/2}\ell^{-14}r_\|^{-3/2}\lambda_{q+1}^{-1}\right)
\\ &\quad\lesssim \bar {e}^{1/2}\delta_{q+1}^{1/2}\ell^{-2}r_\perp^{2/p-1}r_\|^{1/p-1/2},
\endaligned
\end{equation}
where we use (\ref{ell}) and the fact that $\lambda_{q+1}^{29\alpha-\frac{1}{7}}<1$ by our choice of $\alpha$. The bound \eqref{corr temporal} will be used below in the estimation of the Reynolds stress.

Combining (\ref{estimate wqp}), (\ref{correction est}) and (\ref{temporal est1}) we obtain for $t\in[t_{q+1}, \mathfrak{t}]$
\begin{equation}\label{estimate wq}
\aligned
\|w_{q+1}\|_{C_tL^2}&\leq \bar {e}^{1/2}\delta_{q+1}^{1/2}\left(\frac{M_0}{2}+C\ell^{-22}r_\perp r_\|^{-1}+C\bar {e}^{1/2}\delta_{q+1}^{1/2}\ell^{-16}r_\|^{-3/2}\lambda_{q+1}^{-1}\right)
\\&\leq \bar {e}^{1/2}\delta_{q+1}^{1/2}\left(\frac{M_0}{2}+C\lambda_{q+1}^{44\alpha-2/7}+C\bar {e}^{1/2}\delta_{q+1}^{1/2}\lambda_{q+1}^{32\alpha-1/7}\right)
\leq \frac34M_{0}\bar {e}^{1/2}\delta_{q+1}^{1/2},
\endaligned
\end{equation}
where  we chose  $ \beta$ small enough and $a$ large enough such that
$$
C\lambda_{q+1}^{44\alpha-2/7}\leq 1/8,\quad\text{and}\quad C\delta_{q+1}^{1/2}\lambda_{q+1}^{33\alpha-1/7}\leq 1/8.
$$
The bound \eqref{estimate wq} can be directly combined with \eqref{eq:vl} and the definition of the velocity $v_{q+1}$ \eqref{vq} to deduce the first bound in \eqref{inductionv} on the level $q+1$. Indeed, for $t\in[t_{q+1}, \mathfrak{t}]$ we use \eqref{eq:vl} and  \eqref{estimate wq} to have
$$
\|v_{q+1}\|_{C_{t}L^{2}}\leq \|v_{\ell}\|_{C_{t}L^{2}}+\|w_{q+1}\|_{C_{t}L^{2}} \leq M_0\bar {e}^{1/2}(1+\sum_{1\leq r\leq q+1}\delta_{r}^{1/2}).
$$
In addition, \eqref{estimate wq} together with \eqref{error} yields for $t\in[t_{q+1}, \mathfrak{t}]$
$$
\|v_{q+1}-v_{q}\|_{C_{t}L^{2}}\leq \|w_{q+1}\|_{C_{t}L^{2}}+\|v_{\ell}-v_{q}\|_{C_{t}L^{2}}\leq M_0\bar {e}^{1/2} \delta_{q+1}^{1/2},
$$
hence \eqref{iteration} holds.

As the next step, we shall verify the second bound in \eqref{inductionv}.
Using (\ref{estimate aN}) and (\ref{bounds}) we have for $t\in[t_{q+1}, \mathfrak{t}]$
\begin{equation}\label{principle est2}
\aligned
\|w_{q+1}^{(p)}\|_{C^1_{t,x}}&\leq \sum_{\xi\in\Lambda}\|a_{(\xi)}\|_{C^1_{t,x}}\|W_{(\xi)}\|_{C^1_{t,x}}\\
&\lesssim \bar{e}^{1/2} \ell^{-15}r_\perp^{-1}r_\|^{-1/2}\lambda_{q+1}\left(1+\frac{r_\perp \mu}{r_\|}\right)\lesssim \bar{e}^{1/2} \ell^{-15}r_\perp^{-1}r_\|^{-1/2}\lambda_{q+1}^2,
\endaligned
\end{equation}
\begin{equation}\label{correction est2}
\aligned
\|w_{q+1}^{(c)}\|_{C^1_{t,x}}
&\lesssim  \sum_{\xi\in\Lambda}\left(
\|a_{(\xi)}\|_{C^3_{t,x}}(\|W_{(\xi)}^{(c)}\|_{C^1_{t,x}}+\|V_{(\xi)}\|_{C^1_{t}C^1_{x}}
+\|V_{(\xi)}\|_{C_tC^2_{x}})\right)\\
&\lesssim \bar {e}^{1/2}\ell^{-29}r_\|^{-3/2}\lambda_{q+1}\frac{r_\perp \mu}{r_\|}\lesssim \bar {e}^{1/2}\ell^{-29}r_\|^{-3/2}\lambda_{q+1}^2,
\endaligned
\end{equation}
and
\begin{equation}\label{temporal est2}
\aligned
\|w_{q+1}^{(t)}\|_{C^1_{t,x}}&\leq \frac{1}{\mu}\sum_{\xi\in\Lambda}[\|a^2_{(\xi)}
\phi^2_{(\xi)}\psi^2_{(\xi)}\|_{C_tW^{1+\alpha,p}}+\|a^2_{(\xi)}\phi^2_{(\xi)}\psi^2_{(\xi)}\|_{C^1_tW^{\alpha,p}}]
\\
&\leq\frac{1}{\mu}\sum_{\xi\in\Lambda}
\Big((\|a_{(\xi)}\|_{C^0_{t,x}}\|a_{(\xi)}\|_{C^{1+\alpha}_{t,x}}+\|a_{(\xi)}\|_{C^1_{t,x}}\|a_{(\xi)}\|_{C^{\alpha}_{t,x}})\\
&\qquad\Big[\|\phi_{(\xi)}\|_{L^{\infty}}\| \phi_{(\xi)}\|_{W^{1+\alpha,\infty}}\|\psi_{(\xi)}\|_{C_tL^{\infty}}^2+\|\phi_{(\xi)}\|_{L^{\infty}}^2\big(\|\psi_{(\xi)}\|_{C_tL^{\infty}}\|\psi_{(\xi)}\|_{C_tW^{1+\alpha,p}}\\&\quad+\|\psi_{(\xi)}\|_{C^1_tL^{\infty}}\|\psi_{(\xi)}\|_{C_{t}W^{\alpha,p}}
+\|\psi_{(\xi)}\|_{C_tL^{\infty}}\|\psi_{(\xi)}\|_{C^1_tW^{\alpha,p}}\big)\Big]\Big)
\\&\lesssim \bar {e} \ell^{-24}r_\perp^{-1}r_\|^{-2}\lambda_{q+1}^{1+\alpha}\lambda_{q+1}^{-1}\left(1+\frac{r_\perp \mu}{r_\|}\right)
\lesssim \bar {e}\ell^{-24}r_\perp^{-1}r_\|^{-2}\lambda_{q+1}^{1+\alpha},
\endaligned
\end{equation}
where we chose $p$ large enough and applied the Sobolev embedding in the first inequality. This was needed because $\mathbb{P}\mathbb{P}_{\neq0}$ is not a bounded operator on $C^0$. In the last inequality in \eqref{temporal est2}, we used interpolation and an extra $\lambda_{q+1}^\alpha$ appeared.
Combining   \eqref{eq:vl2} and \eqref{principle est2}, \eqref{correction est2}, \eqref{temporal est2} with (\ref{ell}) we obtain for $t\in[t_{q+1}, \mathfrak{t}]$
\begin{equation*}
\aligned
\|v_{q+1}\|_{C^1_{t,x}}&\leq \|v_\ell\|_{C^1_{t,x}}+\|w_{q+1}\|_{C^1_{t,x}}\\
&\leq \bar {e}^{1/2}\left(\lambda_{q+1}^\alpha+C\lambda_{q+1}^{30\alpha+22/7}+C\lambda_{q+1}^{58\alpha+20/7}+C\lambda_{q+1}^{50\alpha+3}\right)
\leq \bar {e}^{1/2}\lambda_{q+1}^4,
\endaligned
\end{equation*}
where we used  the fact that $\bar {e}^{1/2}\leq \ell^{-1/2} \leq\lambda_{q+1}^{\alpha}$. Thus, the second estimate in \eqref{inductionv} holds true on the level $q+1$.

We conclude this part with further estimates of the perturbations $w^{(p)}_{q+1}$, $w^{(c)}_{q+1}$ and $w^{(t)}_{q+1}$, which will be used below in order to bound the Reynolds stress $\mathring{R}_{q+1}$ and to establish the last estimate in \eqref{inductionv} on the level $q+1$.
By a similar approach as in \eqref{principle est1}, \eqref{correction est}, \eqref{temporal est1}, we derive  the following estimates: for $t\in[t_{q+1}, \mathfrak{t}]$ by using (\ref{ell}), (\ref{estimate aN}) and (\ref{bounds})
\begin{equation}\label{principle est22}
\aligned
\|w_{q+1}^{(p)}+w_{q+1}^{(c)}\|_{C_tW^{1,p}}&\leq\sum_{\xi\in\Lambda}
\|\textrm{curl\,}\textrm{curl}(a_{(\xi)}V_{(\xi)})\|_{C_tW^{1,p}}
\\
&\lesssim \sum_{\xi\in \Lambda}\Big(\|a_{(\xi)}\|_{C^3_{t,x}}\|V_{(\xi)}\|_{C_tL^p}+\|a_{(\xi)}\|_{C^2_{t,x}}\|V_{(\xi)}\|_{C_tW^{1,p}}\\
&\quad+\|a_{(\xi)}\|_{C^1_{t,x}}\|V_{(\xi)}\|_{C_tW^{2,p}}+\|a_{(\xi)}\|_{C^0_{t,x}}\|V_{(\xi)}\|_{C_tW^{3,p}}\Big)
\\
&\lesssim \bar {e}^{1/2}r_\perp^{2/p-1}r_\|^{1/p-1/2}\left(\ell^{-29}
\lambda_{q+1}^{-2}+\ell^{-22}\lambda_{q+1}^{-1}+\ell^{-15}+\ell^{-8}\lambda_{q+1}\right)\\
&\lesssim \bar {e}^{1/2}r_\perp^{2/p-1}r_\|^{1/p-1/2}\ell^{-8}\lambda_{q+1},
\endaligned
\end{equation}
and
\begin{equation}\label{corrector est2}
\aligned
\|w_{q+1}^{(t)}\|_{C_tW^{1,p}}
&\leq\frac{1}{\mu}\sum_{\xi\in\Lambda}
\Big(\|a_{(\xi)}\|_{C^0_{t,x}}\|a_{(\xi)}\|_{C^1_{t,x}}\|\phi_{(\xi)}\|_{L^{2p}}^2\|\psi_{(\xi)}\|_{C_tL^{2p}}^2\\
&\quad+\|a_{(\xi)}\|_{C^0_{t,x}}^2\|\phi_{(\xi)}\|_{L^{2p}}\|\nabla \phi_{(\xi)}\|_{L^{2p}}\|\psi_{(\xi)}\|_{C_tL^{2p}}^2\\
&\quad+\|a_{(\xi)}\|_{C^0_{t,x}}^2\|\phi_{(\xi)}\|_{L^{2p}}^2\|\nabla \psi_{(\xi)}\|_{C_tL^{2p}}\|\psi_{(\xi)}\|_{C_tL^{2p}}\Big)
\\
&\lesssim \frac{\bar {e}}{\mu}r_\perp^{2/p-2}r_\|^{1/p-1}\left(\ell^{-23}+\ell^{-16}\lambda_{q+1}\right)\lesssim \bar {e}r_\perp^{2/p-2}r_\|^{1/p-1}\ell^{-16}\lambda_{q+1}^{-2/7}.
\endaligned
\end{equation}

\subsubsection{Control of  the energy}
\label{s:energy}

In this section, we follow the ideas of \cite{BMS20} to control the energy.
Since $W_{(\xi)}$ is $(\mathbb{T}/(r_\perp \lambda_{q+1}))^3$ periodic, we deduce that
\begin{equation}\label{eq:W}\mathbb{P}_{\neq0}(W_{(\xi)}\otimes W_{(\xi)})=\mathbb{P}_{\geq r_\perp \lambda_{q+1}/2}(W_{(\xi)}\otimes W_{(\xi)}),\end{equation}
where $\mathbb{P}_{\geq r}=\Id-\mathbb{P}_{<r}$ and $\mathbb{P}_{< r}$ denotes the Fourier multiplier operator, which projects a function onto its Fourier frequencies $<r$ in absolute value.

We define
$$\delta E(t):=\Big| e(t)(1-\delta_{q+2})-\|v_{q+1}(t)+{z_{q+1}}(t)\|_{L^2}^2\Big|.$$

\bp It holds for $t\in[t_{q+1}, \mathfrak{t}]$
\begin{align}
\delta E(t)\leq& \frac14\delta_{q+2}e(t).
\end{align}
\ep
\begin{proof}
By definition of $\gamma_q$ we find
\begin{align}\label{eq:deltaE}
\begin{aligned}
\delta E(t)&\leq \big|\|w_{q+1}^{(p)}\|_{L^2}^2-3\gamma_q(2\pi)^3\big|+\|w_{q+1}^{(c)}+w_{q+1}^{(t)}\|_{L^2}^2+2\|(v_\ell+z_{q+1})(w_{q+1}^{(c)}+w_{q+1}^{(t)})\|_{L^1}\\
&\qquad+2\|(v_\ell+z_{q+1})w_{q+1}^{(p)}\|_{L^1}+2\|w_{q+1}^{(p)}(w_{q+1}^{(c)}+w_{q+1}^{(t)})\|_{L^1}\\
&\qquad+\|v_\ell-v_q+z_{q+1}-z_q\|_{L^2}^2+2\|(v_\ell-v_q+z_{q+1}-z_q)(v_q+z_q)\|_{L^1},
\end{aligned}
\end{align}
which shall be estimated.

Let us begin with the bound of the first term on the right hand side of \eqref{eq:deltaE}. We use \eqref{can} and the fact that $\mathring{R}_\ell$ is traceless to deduce
\begin{align*}
|w_{q+1}^{(p)}|^2-3\gamma_q=6\sqrt{\ell^2+|\mathring{R}_\ell|^2}+3(\gamma_\ell-\gamma_q)+\sum_{\xi\in \Lambda}a_{(\xi)}^2P_{\neq0}|W_{(\xi)}|^2
,
\end{align*}
hence
\begin{align}\label{eq:gg}
 \big|\|w_{q+1}^{(p)}\|_{L^2}^2-3\gamma_q(2\pi)^3\big|\leq 6\cdot(2\pi)^3\ell+6\|\mathring{R}_\ell\|_{L^1}+3\cdot(2\pi)^3|\gamma_\ell-\gamma_q|+\sum_{\xi\in \Lambda}\Big|\int_{\mathbb{T}^{3}} a_{(\xi)}^2P_{\neq0}|W_{(\xi)}|^2 \dif x\Big|.
\end{align}
Here we estimate each term separately. Using \eqref{ell1}  we find
\begin{align*}
6\cdot(2\pi)^3\ell\leq 6\cdot(2\pi)^3\lambda_{q+1}^{-\frac{3\alpha}2}\leq  \frac{1}{ 48}\lambda_{q+1}^{-2\beta b}e(t)\leq\frac{1}{ 48}\delta_{q+2}e(t),
\end{align*}
which requires $2\beta b<\frac{3\alpha}2$ and choosing $a$ large to absorb the constant. Here we used also $e(t)\geq \underline{e}\geq 4$.
 Using \eqref{inductionv} on $\mathring{R}_q$ gives
\begin{align*}
6\|\mathring{R}_\ell(t)\|_{L^1}\leq  \frac{1}{8}\delta_{q+2}e(t).
\end{align*}

For the third term in \eqref{eq:gg} we use \eqref{z} and \eqref{inductionv} to have
\begin{align*}
3\cdot(2\pi)^3|\gamma_\ell-\gamma_q|&\lesssim \ell \|e'\|_{C^{0}_{t}}+\ell  \|v_q\|_{C^1_{t,x}}(\|v_q\|_{C_{t}L^2}+{\|z_q\|_{C_{t}L^2}})\\
&\qquad+\ell^{\frac12-2\delta}\|{z_q}\|_{C_t^{\frac12-2\delta}L^\infty}(\|v_q\|_{C_{t}L^2}+\|{z_q}\|_{C_{t}L^2})
\\
&\lesssim \ell \tilde e+\ell \lambda_q^4\bar{e}^{1/2}(1+3M_0\bar{e}^{1/2})+\ell^{\frac12-2\delta}{\lambda_{q+1}^{\frac\alpha4}}(1+3M_0\bar{e}^{1/2})\\
&\lesssim \lambda_{q+1}^{-3\alpha/2}\tilde{e}+\lambda_{q+1}^{-\alpha} \bar{e}M_0+\lambda_{q+1}^{-\frac{3\alpha}{2}(\frac12-2\delta){+\frac\alpha4}}\bar{e}^{1/2}M_0
\\&\lesssim M_0\lambda_{q+1}^{-\frac{\alpha}{2}{+\frac\alpha4}} (\bar{e}+\tilde e)\leq \frac{1}{ 48}\delta_{q+2}e(t),
\end{align*}
where $\delta<\frac1{12}$, we require  $\lambda_{q+1}^{-\frac{\alpha}{4}}< \frac{1}{ 48}\lambda_{q+1}^{-2\beta b}\lambda_1^{2\beta}$ implied by {$\alpha>18\beta b$} and we use   $e(t)\geq \underline{e}\geq 4$ and  choose $a$ large enough to absorb $\bar{e}$,  $\tilde{e}$ and $M_0$. Here, we see in particular that it is  not only the upper and lower bound on $e$ which influences the choice of the parameters and consequently the length of the stopping time $\mathfrak{t}$, but also the full $C^{1}$-norm of $e$.

For the last term in \eqref{eq:gg} we apply \cite[Proposition C.1]{BDLIS16} and \eqref{eq:W} to bound
\begin{align*}
\sum_{\xi\in\Lambda}&\Big|\int_{\mathbb{T}^{3}} a_{(\xi)}^2\mathbb{P}_{\neq0}|W_{(\xi)}|^2 \dif x\Big|=\sum_{\xi\in\Lambda}\Big|\int_{\mathbb{T}^{3}} a_{(\xi)}^2P_{\geq r_\perp\lambda_{q+1}/2}|W_{(\xi)}|^2\dif x\Big|\\
&=\sum_{\xi\in\Lambda}\Big|\int_{\mathbb{T}^{3}} |\nabla|^La_{(\xi)}^2|\nabla|^{-L}\mathbb{P}_{\geq r_\perp\lambda_{q+1}/2}|W_{(\xi)}|^2 \dif x\Big|
\\
&\lesssim\|a_{(\xi)}^2\|_{C^L}(r_\perp\lambda_{q+1})^{-L}\||W_{(\xi)}|^2\|_{L^2}\lesssim\ell^{-7L-16}\delta_{q+1}\bar{e}(r_\perp\lambda_{q+1})^{-L}r_\perp^{-1}r_{\|}^{-\frac12}\\
&\leq\ell^{-7L-16}\delta_{q+1}\bar{e}\lambda_{q+1}^{\frac{8-L}7}
\lesssim \lambda_1^{2\beta}\lambda_{q+1}^{(14L+32)\alpha+\frac{8-L}7-2\beta }\bar{e}\leq\frac1{24}\lambda_1^{2\beta}\lambda_{q+1}^{-2\beta b}e(t)=\frac1{48}\delta_{q+2}e(t).
\end{align*}
Here we may choose $L=9$, $\alpha, b\beta$ small enough to have $2\beta b<\frac1{7}-160\alpha$, which gives
$$\lambda_1^{2\beta}\lambda_{q+1}^{(14L+32)\alpha+\frac{8-L}7-2\beta }\bar{e}\leq \lambda_{q+1}^{160 \alpha-\frac17}\leq\lambda_{q+1}^{-2\beta b}e(t),$$
where $\bar{e}\leq \ell^{-1}\leq \lambda_{q+1}^{2\alpha}$ and we use $e(t)\geq \underline{e}\geq 4$.
This completes the bound for \eqref{eq:gg}.

Going back to \eqref{eq:deltaE}, it remains to control
\begin{align*}
&\|w_{q+1}^{(c)}+w_{q+1}^{(t)}\|_{L^2}^2+2\|(v_\ell+z_{q+1})(w_{q+1}^{(c)}+w_{q+1}^{(t)})\|_{L^1}\\
&\qquad+2\|(v_\ell+z_{q+1})w_{q+1}^{(p)}\|_{L^1}+2\|w_{q+1}^{(p)}(w_{q+1}^{(c)}+w_{q+1}^{(t)})\|_{L^1}\\
&\qquad+\|v_\ell-v_q+z_{q+1}-z_q\|_{L^2}^2+2\|(v_\ell-v_q+z_{q+1}-z_q)(v_q+z_q)\|_{L^1}.
\end{align*}
We use the estimates \eqref{correction est} \eqref{temporal est1} and \eqref{ell} to have
\begin{align*}
\|w_{q+1}^{(c)}+w_{q+1}^{(t)}\|_{L^2}^2&\lesssim \bar{e}\delta_{q+1}\ell^{-44}r_\perp^2r_{\|}^{-2}+\bar{e}^2\delta_{q+1}^2\ell^{-32}r_{\|}^{-3}\lambda_{q+1}^{-2}
\\
&\lesssim \bar{e}\lambda_{q+1}^{88\alpha-\frac47}
+\bar{e}^2\lambda_{q+1}^{64\alpha-\frac27}\leq \frac{1}{48}
\lambda_{q+1}^{-2\beta b}e(t)\leq \frac{\delta_{q+2}}{48}e(t),
\end{align*}
where we chose $a$ large enough to absorb $\bar{e}$ and we used the  bounds for the  parameters $\alpha,\beta$ specified in Section~\ref{s:c}. Similarly, we use $\|v_\ell(t)\|_{L^2}\leq \|v_q\|_{C_{t,q}L^2}$ together with \eqref{estimate wqp} to get
\begin{align*}
&2\|(v_\ell+{z_{q+1}})(w_{q+1}^{(c)}+w_{q+1}^{(t)})\|_{L^1}+2\|w_{q+1}^{(p)}(w_{q+1}^{(c)}+w_{q+1}^{(t)})\|_{L^1}
\\
&\qquad\lesssim \bar{e}^{1/2}M_0\|w_{q+1}^{(c)}+w_{q+1}^{(t)}\|_{L^2}\lesssim M_0\Big(\bar{e}\delta_{q+1}^{1/2}\ell^{-22}r_\perp r_{\|}^{-1}+\bar{e}^{\frac32}\delta_{q+1}\ell^{-16}r_{\|}^{-\frac32}\lambda_{q+1}^{-1}\Big)
\\
&\qquad\lesssim \bar{e}\lambda_{q+1}^{44\alpha-\frac27}+\bar{e}^{\frac32}\lambda_{q+1}^{32\alpha-\frac17}\leq \frac{1}{48}
\lambda_{q+1}^{-2\beta b}e(t)\leq \frac{\delta_{q+2}}{48}e(t).
\end{align*}
Next, we apply \eqref{ell} and \eqref{principle est1} and $\|v_\ell\|_{C^1_{t,x,q+1}}\leq \|v_q\|_{C^1_{t,x,q}} $, $H^{3/4}\subset L^4$  to obtain for $\delta$ in \eqref{stopping time}
\begin{align*}
2\|(v_\ell+{z_{q+1}})w_{q+1}^{(p)}\|_{L^1}&
\lesssim\|v_\ell+{z_{q}}\|_{L^\infty}\|w_{q+1}^{(p)}\|_{L^1}+\|z_{q+1}-z_q\|_{L^4}\|w_{q+1}^{(p)}\|_{L^{4/3}}
\\&\lesssim {(\lambda_q^4\bar{e}^{1/2}+\lambda_{q+1}^{\frac\alpha8})}\delta_{q+1}^{1/2}\ell^{-8}r_\perp^{1-2\beta}r_{\|}^{1/2-\beta}+\bar{e}^{1/2}\delta_{q+1}^{1/2}\ell^{-8}\lambda_{q+1}^{-\frac\alpha8(\frac14-\delta)}r_\perp^{1/2}r_{\|}^{1/4}
\\&\lesssim \bar{e}\lambda_{q+1}^{17\alpha-\frac87(1-2\beta)}+\bar{e}^{1/2}\delta_{q+1}^{1/2}\lambda_{q+1}^{16\alpha-\frac47}\leq \frac{1}{96}
\lambda_{q+1}^{-2\beta b}e(t)\leq \frac{\delta_{q+2}}{96}e(t),
\end{align*}
where we used 
$$\|z_{q+1}-z_q\|_{L^4}\lesssim\|z_{q+1}-z_q\|_{H^{3/4}}\lesssim \lambda_{q+1}^{-\frac\alpha8(\frac14-\delta)}.$$

For the last  terms we have
\begin{align*}
&\|v_\ell-v_q+z_{q+1}-z_q\|_{L^2}^2+2\|(v_\ell-v_q+z_{q+1}-z_q)(v_q+z_q)\|_{L^1}
\\&\lesssim \|v_\ell-v_q\|_{L^2}^2+\|z_{q+1}-z_q\|_{L^2}^2+(\|v_q\|_{L^2}+\|z_q\|_{L^2})(\|v_\ell-v_q\|_{L^2}+\|z_{q+1}-z_q\|_{L^2})
\\&\lesssim M_0\bar{e}^{1/2}(\|v_\ell-v_q\|_{L^2}+\|z_{q+1}-z_q\|_{L^2})
\\
&\lesssim M_0(\ell\lambda_q^4+\lambda_{q+1}^{-\frac\alpha8(1-\delta)})\bar{e}\leq M_0(\lambda_{q+1}^{-\alpha}+\lambda_{q+1}^{-\frac\alpha8(1-\delta)})\bar{e}\leq \frac{1}{96}
\lambda_{q+1}^{-2\beta b}e(t)\leq \frac{\delta_{q+2}}{96}e(t) .
\end{align*}

Combining the above estimates the result follows.
\end{proof}

\subsubsection{Definition of the Reynolds stress $\mathring{R}_{q+1}$}\label{s:def}

Subtracting from (\ref{induction}) at level $q+1$ the system (\ref{mollification}), we obtain on $[t_{q+1},\mathfrak t]$
\begin{equation}\label{stress}
\aligned
\div\mathring{R}_{q+1}-\nabla p_{q+1}&=\underbrace{-\Delta w_{q+1}+\partial_t(w_{q+1}^{(p)}+w_{q+1}^{(c)})+\div((v_\ell+z_\ell)\otimes w_{q+1}+w_{q+1}\otimes (v_\ell+z_\ell))}_{\div(R_{\textrm{lin}})+\nabla p_{\textrm{lin}}}
\\&\quad+\underbrace{\div\left((w_{q+1}^{(c)}+w_{q+1}^{(t)})\otimes w_{q+1}+w_{q+1}^{(p)}\otimes (w_{q+1}^{(c)}+w_{q+1}^{(t)})\right)}_{\div(R_{\textrm{cor}})+\nabla p_{\textrm{cor}}}
\\&\quad+\underbrace{\div(w_{q+1}^{(p)}\otimes w_{q+1}^{(p)}+\mathring{R}_\ell)+\partial_tw_{q+1}^{(t)}}_{\div(R_{\textrm{osc}})+\nabla p_{\textrm{osc}}}
\\&\quad+\underbrace{\div\left(v_{q+1}{\otimes}{z_{q+1}}-v_{q+1}{\otimes}z_\ell+{z_{q+1}}{\otimes}v_{q+1}-z_\ell{\otimes}v_{q+1}
	+{z_{q+1}}{\otimes}{z_{q+1}}-z_\ell{\otimes}z_\ell\right)}_{\div(R_{\textrm{com}1})+\nabla p_{\textrm{com}1}}
\\&\quad+\div(R_{\textrm{com}})-\nabla p_\ell.
\endaligned
\end{equation}
We recall the inverse divergence operator $\mathcal{R}$ as in \cite[Section 5.6]{BV19}, which acts on vector fields $v$ with $\int_{\mathbb{T}^3}v\dif x=0$ as
\begin{equation*}
(\mathcal{R}v)^{kl}=(\partial_k\Delta^{-1}v^l+\partial_l\Delta^{-1}v^k)-\frac{1}{2}(\delta_{kl}+\partial_k\partial_l\Delta^{-1})\div\Delta^{-1}v,
\end{equation*}
for $k,l\in\{1,2,3\}$. Then $\mathcal{R}v(x)$ is a symmetric trace-free matrix for each $x\in\mathbb{T}^3$, and $\mathcal{R}$ is a right inverse of the div operator, i.e. $\div(\mathcal{R} v)=v$. By using $\mathcal{R}$ we define
\begin{equation*}
R_{\textrm{lin}}:=-\mathcal{R}\Delta w_{q+1}+\mathcal{R}\partial_t(w_{q+1}^{(p)}+w_{q+1}^{(c)})
+(v_\ell+z_\ell)\mathring\otimes w_{q+1}+w_{q+1}\mathring\otimes (v_\ell+z_\ell),
\end{equation*}
\begin{equation*}
R_{\textrm{cor}}:=(w_{q+1}^{(c)}+w_{q+1}^{(t)})\mathring{\otimes} w_{q+1}+w_{q+1}^{(p)}\mathring{\otimes} (w_{q+1}^{(c)}+w_{q+1}^{(t)}),
\end{equation*}
\begin{equation*}
R_{\textrm{com}1}:=v_{q+1}\mathring{\otimes}{z_{q+1}}-v_{q+1}\mathring{\otimes}z_\ell+{z_{q+1}}\mathring{\otimes}v_{q+1}-z_\ell\mathring{\otimes}v_{q+1}
+{z_{q+1}}\mathring{\otimes}{z_{q+1}}-z_\ell\mathring{\otimes}z_\ell.
\end{equation*}
We observe that if $\mathring{R}_q(t,x), v_q(t,x)$ are deterministic for $t\in [t_{q},0]$, the same is valid for the above defined error terms $R_{\mathrm{lin}}(t,x)$, $R_{\mathrm{cor}}(t,x)$, $R_{\mathrm{com1}}(t,x)$ for $t\in [t_{q+1},0]$.

In order to define the remaining oscillation error from the third line in \eqref{stress}, we apply (\ref{can}) and (\ref{equation for temporal}) to obtain
\begin{align*}
&\div(w_{q+1}^{(p)}\otimes w_{q+1}^{(p)}+\mathring{R}_\ell)+\partial_tw_{q+1}^{(t)}\\
&\quad=\sum_{\xi\in\Lambda}\div\left(a^2_{(\xi)}\mathbb{P}_{\neq0}(W_{(\xi)}\otimes W_{(\xi)})\right)+\nabla \rho+\partial_tw_{q+1}^{(t)}
\\
&\quad=\sum_{\xi\in\Lambda}\mathbb{P}_{\neq0}\left(\nabla a^2_{(\xi)}\mathbb{P}_{\neq0}(W_{(\xi)}\otimes W_{(\xi)})\right)+\nabla \rho+\sum_{\xi\in\Lambda}\mathbb{P}_{\neq0}\left(a^2_{(\xi)}\div(W_{(\xi)}\otimes W_{(\xi)})\right)+\partial_tw_{q+1}^{(t)}
\\
&\quad=\sum_{\xi\in\Lambda}\mathbb{P}_{\neq0}
\left(\nabla a_{(\xi)}^2\mathbb{P}_{\neq0}(W_{(\xi)}\otimes W_{(\xi)}) \right)+\nabla \rho+\nabla p_1-\frac{1}{\mu}\sum_{\xi\in\Lambda}\mathbb{P}_{\neq0}
\left(\partial_t a_{(\xi)}^2(\phi_{(\xi)}^2\psi_{(\xi)}^2\xi) \right)
\end{align*}
Therefore,
\begin{equation*}
\aligned
R_{\textrm{osc}}:=&\sum_{\xi\in\Lambda}\mathcal{R}
\left(\nabla a_{(\xi)}^2\mathbb{P}_{\neq0}(W_{(\xi)}\otimes W_{(\xi)}) \right)-\frac{1}{\mu}\sum_{\xi\in\Lambda}\mathcal{R}
\left(\partial_t a_{(\xi)}^2(\phi_{(\xi)}^2\psi_{(\xi)}^2\xi) \right)=:R_{\textrm{osc}}^{(x)}+R_{\textrm{osc}}^{(t)},
\endaligned
\end{equation*}
which is also deterministic for $t\in [t_{q+1},0]$.
Finally we define the Reynolds stress on the level $q+1$ by
\begin{equation*}\aligned
\mathring{R}_{q+1}:=R_{\textrm{lin}}+R_{\textrm{cor}}+R_{\textrm{osc}}+R_{\textrm{com}}+R_{\textrm{com}1}.
\endaligned
\end{equation*}
We note that by construction $\mathring{R}_{q+1}(t,x)$ is deterministic for $t\in [t_{q+1},0]$.

\subsubsection{Inductive estimate for $\mathring{R}_{q+1}$}
\label{sss:R}
To conclude the proof of Proposition~\ref{main iteration}, we shall verify the third estimate in \eqref{inductionv}. To this end, we estimate each term in the definition of $\mathring{R}_{q+1}$ separately.

In the following we choose $p=\frac{32}{32-7\alpha}>1$ so that it holds  in particular that $r_\perp^{2/p-2}r_\|^{1/p-1}\leq \lambda_{q+1}^\alpha$. In this section, we also consider all the norms are on $[t_{q+1},\mathfrak t]$ and we omit the subscript $q+1$ if there is no danger of confusion. 
For the linear  error we apply \eqref{inductionv} to obtain for $t\in[t_{q+1}, \mathfrak{t} ]$
\begin{equation*}
\aligned
\|R_{\textrm{lin}}\|_{C_tL^p}
&\lesssim\|\mathcal{R}\Delta w_{q+1}\|_{C_tL^p}+\|\mathcal{R}\partial_t(w_{q+1}^{(p)}+w_{q+1}^{(c)})\|_{C_tL^p}\\
&\qquad+\|(v_\ell+z_\ell)\mathring{\otimes}w_{q+1}+w_{q+1}\mathring{\otimes}(v_\ell+z_\ell)\|_{C_tL^p}
\\
&\lesssim\|w_{q+1}\|_{C_tW^{1,p}}+\sum_{\xi\in\Lambda}\|\partial_t\textrm{curl}
(a_{(\xi)}V_{(\xi)})\|_{C_tL^p}+\bar{e}^{1/2}(\lambda_{q}^4+{\lambda_{q+1}^{\frac\alpha8}})\|w_{q+1}\|_{C_tL^p},
\endaligned
\end{equation*}
where by \eqref{bounds} and \eqref{estimate aN}
\begin{equation*}
\aligned
\sum_{\xi\in\Lambda}\|\partial_t\textrm{curl}
(a_{(\xi)}V_{(\xi)})\|_{C_tL^p}&\leq \sum_{\xi\in\Lambda}\left(\|
a_{(\xi)}\|_{C_tC^1_x}\|\partial_t V_{(\xi)}\|_{C_tW^{1,p}}+\|\partial_ta_{(\xi)}\|_{C_tC^1_x}\| V_{(\xi)}\|_{C_tW^{1,p}}\right)\\
&\lesssim \bar{e}^{1/2} \ell^{-15}r_{\perp}^{2/p}r_{\|}^{1/p-3/2}\mu+\bar{e}^{1/2} \ell^{-22}r_{\perp}^{2/p-1}r_{\|}^{1/p-1/2}\lambda_{q+1}^{-1}.
\endaligned
\end{equation*}
In view of (\ref{principle est22}), (\ref{corrector est2}) as well as (\ref{principle est1}), (\ref{corr temporal}),  we deduce  for $t\in{[t_{q+1},\mathfrak{t}]}$
\begin{equation*}
\aligned
\|R_{\textrm{lin}}\|_{C_tL^p}
&\lesssim \bar{e}^{1/2}\ell^{-8}r_\perp^{2/p-1}r_\|^{1/p-1/2}\lambda_{q+1}+\bar{e}\ell^{-16}r_\perp^{2/p-2}r_\|^{1/p-1}\lambda_{q+1}^{-2/7}
\\
&\qquad +\bar{e}^{1/2}\ell^{-15}r_\perp^{2/p}r_\|^{1/p-3/2}\mu
+\bar{e}^{1/2}\ell^{-22}r_\perp^{2/p-1}r_\|^{1/p-1/2}\lambda_{q+1}^{-1}\\
&\qquad+\bar{e}\ell^{-8}r_\perp^{2/p-1}r_\|^{1/p-1/2}(\lambda^4_{q}+{\lambda_{q+1}^{\frac\alpha8}})
\\
&\lesssim \bar{e}^{1/2}\lambda_{q+1}^{17\alpha-1/7}+\bar{e}\lambda_{q+1}^{33\alpha-2/7}+\bar{e}^{1/2}\lambda_{q+1}^{31\alpha-1/7}+\bar{e}^{1/2}\lambda_{q+1}^{45\alpha-15/7}+\bar{e}\lambda_{q+1}^{19\alpha-8/7}
\\
&\leq\frac{e(t)\delta_{q+3}}{5\cdot 48}.
\endaligned
\end{equation*}
Here, we took  $\bar e\lambda_{q+1}^{31\alpha-\frac17}\leq \frac{1}{5\cdot48}\lambda_{q+1}^{-2\beta b^2}e(t)$, which can be done  by choosing  $a$ sufficiently large to absorb the implicit universal constant.

The corrector error  is estimated  using \eqref{principle est1}, \eqref{correction est}, \eqref{temporal est1}, (\ref{corr temporal})  for $t\in[t_{q+1}, \mathfrak{t}]$ as
\begin{equation*}
\aligned
\|R_{\textrm{cor}}\|_{C_tL^p}
&\leq\|w_{q+1}^{(c)}+ w_{q+1}^{(t)}\|_{C_tL^{2p}}\| w_{q+1}\|_{C_tL^{2p}}+\|w_{q+1}^{(c)}+ w_{q+1}^{(t)}\|_{C_tL^{2p}}\| w_{q+1}^{(p)}\|_{C_tL^{2p}}
\\
&\lesssim \bar{e}\left(\ell^{-22}r_\perp^{1/p}r_\|^{1/(2p)-3/2}
+\ell^{-16}\bar{e}^{1/2}r_\perp^{1/p-1}r_\|^{1/(2p)-2}\lambda_{q+1}^{-1}\right)\ell^{-8}r_\perp^{1/p-1}r_\|^{1/(2p)-1/2}
\\
&\lesssim \bar{e}\left(\ell^{-30}r_\perp^{2/p-1}r_\|^{1/p-2}
+\ell^{-24}\bar{e}^{1/2}r_\perp^{2/p-2}r_\|^{1/p-5/2}\lambda_{q+1}^{-1}\right)
\\
&\lesssim \bar{e}\left(\lambda_{q+1}^{61\alpha-2/7}+\bar{e}^{1/2}\lambda_{q+1}^{49\alpha-1/7}\right)
\leq\frac{e(t)\delta_{q+3}}{5\cdot48}.
\endaligned
\end{equation*}
Here, we have taken $\bar{e}^{1/2}\lambda_{q+1}^{49\alpha-1/7}\leq\lambda_{q+1}^{50\alpha-\frac17}\leq \frac{1}{5\cdot48}\lambda_{q+1}^{-2\beta b^2}e(t)$.

We continue with the oscillation error $R_{\textrm{osc}}$. In order to bound $R_{\textrm{osc}}^{(x)}$, we  recall the following result from \cite[Lemma~7.5]{BV19}.

\bl\label{lemma} Fix parameters $1\leq \zeta<\kappa, p\in (1,2]$, and assume there exists $N\in\mathbb{N}$ such that $\zeta^N\leq \kappa^{N-2}$. Let
	$a\in C^N(\mathbb{T}^3)$ be such that there exists $C_a>0$ with
	\begin{equation*}\|D^j a\|_{C^0}\leq C_a \zeta^j,\end{equation*}
	for all $0\leq j\leq N$. Assume that $f\in L^p(\mathbb{T}^3)$ such that $\int_{\mathbb{T}^3}a(x)\mathbb{P}_{\geq\kappa}f(x)\dif x=0$. Then we have
	\begin{equation*}\||\nabla|^{-1}(a \mathbb{P}_{\geq\kappa}f)\|_{L^p}\leq C_a \frac{\|f\|_{L^p}}{\kappa},\end{equation*}
	where the implicit constant depends only on $p$ and $N$.
	
\el

Using \eqref{eq:W} and Lemma \ref{lemma} with $a=\nabla a^2_{(\xi)}$ for $C_a=\bar{e}\ell^{-23}$, $\zeta=\ell^{-7}$, $\kappa=r_{\perp}\lambda_{q+1}/2$ and any $N\geq 3$,
we have $\zeta^3=\ell^{-21}\leq \lambda_{q+1}^{42\alpha}<\kappa$, which implies
\begin{equation*}
\aligned
\|R_{\textrm{osc}}^{(x)}\|_{C_{t}L^p}&\leq \sum_{\xi\in\Lambda}\big\|\mathcal{R}\big(\nabla a^2_{(\xi)}\mathbb{P}_{\geq r_\perp\lambda_{q+1}/2}(W_{(\xi)}\otimes W_{(\xi)})\big)\big\|_{C_{t}L^p}
\\
&\lesssim \bar{e}\ell^{-23}\frac{\|W_{(\xi)}\otimes W_{(\xi)}\|_{C_{t}L^p}}{r_\perp\lambda_{q+1}}\lesssim \bar{e}\ell^{-23}\frac{\|W_{(\xi)}\|^2_{C_{t}L^{2p}}}{r_\perp\lambda_{q+1}}
\\&\lesssim \bar{e}\ell^{-23}r_\perp^{2/p-2}r_\|^{1/p-1}(r_\perp^{-1}\lambda_{q+1}^{-1})\lesssim \bar{e}\ell^{-23}\lambda_{q+1}^\alpha (r_\perp^{-1}\lambda_{q+1}^{-1})\\
&\lesssim \bar{e}\lambda_{q+1}^{47\alpha-1/7}\leq\frac{e(t)\delta_{q+3}}{10\cdot 48}.
\endaligned
\end{equation*}
For the second term $R_{\textrm{osc}}^{(t)}$ we use Fubini's theorem to integrate along the orthogonal directions of $\phi_{(\xi)}$ and $\psi_{(\xi)}$ and apply (\ref{bounds}) to deduce
\begin{equation*}
\aligned
\|R_{\textrm{osc}}^{(t)}\|_{C_{t}L^p}&\leq \mu^{-1}\sum_{\xi\in\Lambda}\|\partial_t a_{(\xi)}^2\|_{C^{0}_{t,x}}\|\phi_{(\xi)}\|_{C_{t}L^{2p}}^2\|\psi_{(\xi)}\|_{C_{t}L^{2p}}^2
\\
&\lesssim \bar{e} \mu^{-1}\ell^{-23}r_\perp^{2/p-2}r_\|^{1/p-1}\lesssim \bar{e} \lambda_{q+1}^{47\alpha-9/7}\leq \frac{e(t)\delta_{q+3}}{10\cdot 48}.
\endaligned
\end{equation*}

In view of the standard mollification estimates and \eqref{z}, \eqref{inductionv} it holds for $t\in[t_{q+1}, \mathfrak{t}]$
\begin{equation*}
\aligned
\|R_{\textrm{com}}\|_{C_{t,q+1}L^1}&\lesssim \ell(\|v_q\|_{C^1_{t,x,q}}+\|z_q\|_{C_{t,q}C^1})(\|v_q\|_{C_{t,q}L^2}+\|z_q\|_{C_{t,q}L^2})\\
&\qquad+\ell^{\frac{1}{2}-2\delta}\|z_q\|_{C_{t,q}^{\frac{1}{2}-2\delta}L^\infty}(\|v_q\|_{C_{t,q}L^2}
+\|z_q\|_{C_{t,q}L^2})\\
&\lesssim M_0\ell  (\lambda_q^4+\lambda_{q+1}^{\frac\alpha4})\bar{e}+\ell^{\frac{1}{2}-2\delta}\lambda_{q+1}^{\frac\alpha4}M_0\bar{e}\leq \frac{e(t)\delta_{q+3}}{5\cdot48},
\endaligned
\end{equation*}
where $\delta<\frac{1}{12}$ and  we require  that
$$
CM_0\bar{e}\ell \lambda_q^4\leq CM_0\bar{e}\lambda_{q+1}^{-\alpha}\leq \frac{1}{10}\lambda_{q+1}^{-2\beta b^2}\leq\frac{e(t)\delta_{q+3} }{{15}\cdot 48},
$$
$$CM_0\ell^{\frac{1}{2}-2\delta}{\lambda_{q+1}^{\frac\alpha4}}\bar{e}\leq C\bar{e}M_0 \ell^{\frac13}{\lambda_{q+1}^{\frac\alpha4}}\leq C M_0\lambda_{q+1}^{-\alpha/2+{\frac\alpha4}}\bar{e}\leq \frac{1}{10}\lambda_{q+1}^{-2b^2\beta}
\leq\frac{e(t)\delta_{q+3}}{{15}\cdot48}.
$$
 With the  choice of $\ell$ in \eqref{ell1} and since we postulated that $\alpha >{18}\beta b^2$, this can indeed be achieved by possibly increasing  $a$.
Finally, we use \eqref{z} to obtain for $t\in[t_{q+1}, \mathfrak{t}]$
{\begin{equation*}
	\aligned
\|R_{\textrm{com}1}\|_{C_tL^1}\lesssim& M_0\bar{e}^{1/2} \|z_\ell-z_{q+1}\|_{C_tL^2}\lesssim M_0\bar{e}^{1/2} (\|z_\ell-z_{q}\|_{C_tL^2}+\|z_{q+1}-z_{q}\|_{C_tL^2})
\\\leq&  M_0\bar{e}^{1/2}(\ell^{\frac{1}{2}-2\delta}+\lambda_{q+1}^{-\frac\alpha8(1-\delta)})\leq \frac{e(t)\delta_{q+3}}{5\cdot48}.
\endaligned
\end{equation*}}
Here, we used $\alpha>{18}\beta b^2$ and {$\frac\alpha8(1-\delta)>2\beta b^2$} in the last inequality.
Summarizing all the above estimates we obtain
\begin{equation*}
\aligned
&\|\mathring{R}_{q+1}\|_{C_{t}L^1}\leq \frac1{48}e(t)\delta_{q+3},
\endaligned
\end{equation*}
which is the desired last bound in \eqref{inductionv}.

The proof of Proposition~\ref{main iteration} is therefore complete.

\section{Non-uniqueness of  Markov solutions}\label{sec:mar}

\subsection{Martingale solutions}
\label{s:martsol}

The proof of non-uniqueness of Markov solutions in Theorem~\ref{th:ma2} requires two ingredients: (i) a stable notion of solution, which is suitable for the abstract procedure of Markov selection introduced by Krylov~\cite{K73} and adapted to the setting of Leray solutions to the Navier--Stokes system \eqref{1} by Flandoli and Romito \cite{FR08}; (ii) a non-uniqueness in law result in the same class of solutions. As non-uniqueness in law for Leray martingale solutions seems out of reach at the moment, we relax the notion of solution accordingly and establish non-uniqueness in law by means of convex integration combined with certain probabilistic arguments. However, we cannot relax the notion of solution too much as  compactness of the set of solutions is indispensable for the stability in (i).

To fulfill these requirements we search for martingale solutions in the class $L^{\infty}_{\rm loc}([0,\infty);L^{2}_{\sigma})\cap L^{2}_{\rm loc}([0,\infty);H^{\gamma})$ for  $\gamma\in (0,1)$ from Theorem~\ref{Main results1}. In addition, a relaxed form of an energy inequality has to be included in the definition of martingale solution. Therefore, both Leray as well as the convex integration solutions constructed in Section~\ref{s:1.1} belong to this class. The formulation of the relaxed energy inequality is essential in order to check the requirements of the  Markov selection procedure. Since the kinetic energy is only lower semicontinuous, it is not possible to obtain stability of the relaxed energy inequality for all times. To overcome this issue we follow  the ideas of Flandoli and Romito \cite{FR08} and employ an almost sure supermartingale formulation.

Let us  recall the key definition from \cite[Definition 3.2]{FR08}.

\bd\label{df:sm}
Let $\theta$ be an $(\mathcal{B}_t)_{t\geq 0}$-adapted real-valued stochastic process on $\Omega_{0}$ and let $P$ be a probability measure on $\Omega_{0}$. We call $\theta$ an almost sure $(\mathcal{B}_t)_{t\geq0}$-supermartingale under $P$ provided $\theta$ is $P$-integrable and
\begin{align}\label{ine:sm}
\E^P[\theta_t1_A]\leq \E^P[\theta_s1_A],
\end{align}
for a.e. $s\geq 0$, for every $t\geq s$ and every  $A\in \mathcal{B}_s$. The time points $s$ for which \eqref{ine:sm} holds are called regular times of $\theta$. The time points $s$ for which \eqref{ine:sm} does not hold are called exceptional times of $\theta$.
\ed

Similarly, we could also introduce almost sure $(\cB_t^0)_{t\geq0}$-supermartingale.  It is easy to check the following result.

\bp\label{prop:0}
Suppose $\theta $ is an almost sure $(\cB_t)_{t\geq0}$-supermartingale and $\theta$ is $(\cB_t^0)_{t\geq0}$-adapted. Then $\theta$ is an almost sure $(\cB_t^0)_{t\geq0}$-supermartingale.
\ep

\begin{remark}
Note that \cite{FR08} only introduced  almost sure supermartingales with respect to the canonical filtration $(\cB_t^0)_{t\geq0}$. This is also the setting we employ in Definition~\ref{martingale solution} and in the Markov selection in Section~\ref{s:mark}. However, for the probabilistic extension of the convex integration solutions in Section~\ref{s:ext}, particularly in Proposition \ref{prop:1}, we additionally require a notion of  martingale solution up to a stopping time with respect to the right continuous filtration $(\mathcal{B}_{t})_{t\geq0}$, see Definition~\ref{def:martsol}. As seen in Proposition~\ref{prop:2}, the extended solutions then satisfy Definition~\ref{martingale solution} with the correct filtration $(\cB_t^0)_{t\geq0}$ and can therefore be used in the Markov selection procedure.
\end{remark}

Using a.s. supermartingales in the  formulation of the (relaxed) energy inequality only leads to the so-called almost sure Markov property. That is, the Markov property only holds  for almost every time, in contrast to the classical Markov or even the strong Markov property. See Section~\ref{s:mark} for precise definitions.

In the case of stochastic Euler equations in \cite{HZZ20} we were able to overcome this limitation. Namely, for a suitable notion of dissipative martingale solution we proved existence as well as non-uniqueness of strong Markov solutions. This was permitted, on the one hand, by including an additional variable, i.e. the energy, into the selection procedure, and, on the other hand, by achieving a stronger form of the energy inequality in the convex integration construction. Since the convex integration in the Navier--Stokes setting is significantly more challenging, we are only able to obtain a weaker form of the (relaxed) energy inequality and consequently only the almost sure Markov property.

In what follows, we fix $\gamma\in (0,1)$ from Theorem~\ref{Main results1}.

\begin{definition}\label{martingale solution}
	Let $s\geq 0$ and $x_{0}\in L^{2}_{\sigma}$. A probability measure $P\in \mathscr{P}(\Omega_0)$ is  a martingale solution to the Navier--Stokes system \eqref{1}  with the initial value $x_0 $ at time $s$ provided
	
	\no{\rm(M1)} $P(x(t)=x_0,  0\leq t\leq s)=1$
	and
$P(x\in L^\infty_{\rm{loc}}([0,\infty);L^2_\sigma)\cap L^2_{\rm{loc}}([0,\infty);H^\gamma))=1.$

	\no{\rm(M2)} For every $e_i\in C^\infty(\mathbb{T}^3)\cap L^2_\sigma$, and for $t\geq s$ the process
	$$M_{t,s}^{i}:=\langle x(t)-x(s),e_i\rangle+\int^t_s\langle \div(x(r)\otimes x(r))-\Delta x(r),e_i\rangle \dif r$$
	is a continuous square integrable $(\mathcal{B}_t^0)_{t\geq s}$-martingale under $P$ with the quadratic variation process
	given by
	$\|G^*e_i\|_{U}^2(\cdot-s)$.

	\no {\rm(M3)}
	For every $p\in\mathbb{N}$ there exists
	$C_{p}\geq 0$ such that the process  $\{E^p(t)-E^p(s)\}_{t\geq s}$ defined through\footnote{Recall that we denoted $C_{G}:=\|G\|_{L_2(U;L^2_\sigma)}^2$.}
	$$
	E^p(t):=\|x(t)\|^{2p}_{L^2}+ 2p\int_0^{t}\|x(l)\|^{2p-2}_{L^2} \|x(l)\|^2_{H^{\gamma}} \dif l- C_pC_{G}\int_0^t\|x(l)\|^{2p-2}_{L^2}\dif l$$
	 is an almost sure $(\mathcal{B}_t^0)_{t\geq s}$-supermartingale under $P$ and $s$ is a regular time.

	\end{definition}

Given a martingale solution $P$, define the set $T_P\subset(0,\infty)$ of exceptional times of $P$ as the union of the sets of exceptional times of the a.s. supermartingales $E^p$, $p\in \mathbb{N}$.

\begin{remark}
Note that the classical Leray martingale solutions $P$ as considered for instance in \cite{FR08} satisfy the energy inequality
$$
\begin{aligned}
&\mathbf{E}^{P}\left[\|x(t)\|_{L^{2}}^{2p}\right]+2p\mathbf{E}^{P}\left[\int_{0}^{t}\|x(l)\|_{L^{2}}^{2p-2}\|x(l)\|_{H^{1}}^{2}\dif l\right]\\
&\qquad\leq \|x(0)\|_{L^{2}}^{2p}+p(2p-1)C_{G}\mathbf{E}^{P}\left[\int_{0}^{t}\|x(l)\|_{L^{2}}^{2p-2}\dif l\right].
\end{aligned}
$$
Moreover, it can be shown that they are martingale solutions in the sense of Definition~\ref{martingale solution} with appropriate constants $C_{p}$.  Nevertheless, in order to include also our convex integration solutions into the same class of solutions,  we formulate the condition (M3) in Definition~\ref{martingale solution} for solutions only in $H^{\gamma}$ and with general constants $C_{p}$.
\end{remark}

\begin{remark}\label{r:CG}
The definition of $E^{p}$ in Definition~\ref{martingale solution} is  notationally convenient in the truly stochastic setting, i.e. for $C_{G}\neq0$. However, we note that it is not consistent with the deterministic setting. Indeed, we show in Section~\ref{s:det} that the non-uniqueness of semiflows in Corollary~\ref{cor:det} follows for solutions satisfying
$$ \| x (t) \|_{L^{2}}^2 + 2 \int_s^t \| x(l) \|_{H^{\gamma}}^2 \dif l- C t \leqslant
   \| x (s) \|^2 - C s , $$
   for some $C>0$, every $t$ and almost every $s$.
In particular, for the convex integration solutions it is necessary to include a non-zero constant $C$.
Therefore, a consistent version of the relaxed energy inequality in the stochastic setting is  obtained through
$$ E^p (t) = \| x (t) \|_{L^{2}}^{2 p} + 2 p \int_0^t \| x(l) \|_{L^2}^{2 p - 2} \| x(l)
   \|_{H^{\gamma}}^2\dif l - (C_{p, 1} + C_{p, 2} C_G) \int_0^t \| x(l) \|_{L^2}^{2 p -
   2}\dif l . $$
Here, we can tune $C_{p, 1}$ in order to include the convex integration solutions (in
both deterministic and stochastic setting, independent of the size of $C_G$)
and we choose  $C_{p, 2}$ to include Leray solutions in the stochastic setting.

Alternatively, we may also keep (M3) in Definition~\ref{martingale solution} as it is and redefine $C_{G}$ to $C_{G}>0$ such that $C_{G}\geq \|G\|_{L_{2}(U;L^{2}_{\sigma})}^{2}$.
\end{remark}

We also define martingale solutions up to a stopping time $\tau:\Omega_{0}\to[0,\infty]$ with respect to the right continuous filtration $(\mathcal{B}_{t})_{t\geq s}$. This is necessary since the convex integration in Section~\ref{s:1.1} only yields solutions up to a stopping time, which  requires the right continuous filtration when transferred to the canonical path space $\Omega_{0}$.
We define the space of trajectories stopped at the time $\tau$ by
$$
\Omega_{0,\tau}:=\{\omega(\cdot\wedge\tau(\omega));\omega\in \Omega_{0}\}.
$$
We note that due to the Borel measurability of $\tau$, the set $\Omega_{0,\tau}=\{\omega; x(t,\omega)=x(t\wedge \tau(\omega),\omega), \forall t\geq0\}$ is a Borel subset of $\Omega_{0}$ hence $\mathscr{P}(\Omega_{0,\tau})\subset \mathscr{P}(\Omega_{0})$.

\begin{definition}\label{def:martsol}
Let $s\geq 0$ and $x_{0}\in L^{2}_{\sigma}$. Let $\tau\geq s$ be a $(\mathcal{B}_{t})_{t\geq s}$-stopping time. A probability measure $P\in\mathscr{P}(\Omega_{0,\tau})$  is  a martingale solution to the Navier--Stokes system \eqref{1} on $[s,\tau]$ with the initial value $x_0$ at time $s$ provided

\no{\rm (M1)} $P(x(t)=x_0,  0\leq t\leq s)=1$  and
$P(x\in L^\infty_{\rm{loc}}([0,\tau];L^2_\sigma)\cap L^2_{\rm{loc}}([0,\tau];H^\gamma))=1.$

\no{\rm (M2)} For every $e_i\in C^\infty(\mathbb{T}^3)\cap L^2_\sigma$, and for $t\geq s$ the process
$$M_{t\wedge\tau,s}^{i}:=\langle x(t\wedge\tau)-x_{0},e_i\rangle+\int^{t\wedge\tau}_s\langle \div(x(r)\otimes x(r))-\Delta x(r),e_i\rangle \dif r$$
is a continuous square integrable $(\mathcal{B}_t)_{t\geq s}$-martingale under $P$ with the quadratic variation process
given by
$\|G^*e_i\|_U^2(\cdot\wedge\tau-s).$

\no {\rm (M3)}
For every $p\in\mathbb{N}$ there exists
	$C_p\geq 0$ such that the process  $\{E^p(t\wedge\tau)-E^p(s)\}_{t\geq s}$ defined through
	$$
	E^p(t):=\|x(t)\|^{2p}_{L^2}+ 2p\int_0^{t}\|x(l)\|^{2p-2}_{L^2} \|x(l)\|^2_{H^{\gamma}} \dif l- C_pC_{G}\int_0^t\|x(l)\|^{2p-2}_{L^2}\dif l$$
	 is an almost sure $(\mathcal{B}_t)_{t\geq s}$-supermartingale under $P$ and $s$ is a regular time.
\end{definition}

The following result provides  existence as well as stability of martingale solutions. Similar results can be found in \cite{FR08,GRZ09,HZZ19}. However, as our notion of solution is weaker/different we include the proof for readers' convenience in Appendix~\ref{s:appA}.

\bt\label{convergence}
	For every $(s,x_0)\in [0,\infty)\times L_{\sigma}^2$, there exists  $P\in\mathscr{P}(\Omega_0)$ which is a martingale solution to the Navier--Stokes system \eqref{1} starting at time $s$ from the initial condition $x_0$  in the sense of Definition \ref{martingale solution}. The set of all  martingale solutions with the same $C_{p}$, $p\in\mathbb{N}$, in \emph{(M3)} of Definition  \ref{martingale solution} is denoted by $\mathscr{C}(s,x_0,C_{p})$.
	
	Let $(s_n,x_n)\rightarrow (s,x_0)$ in $[0,\infty)\times L_{\sigma}^2$ as $n\rightarrow\infty$  and let $P_n\in \mathscr{C}(s_n,x_n,C_{p})$. Then there exists a subsequence $n_k$ such that the sequence $\{P_{n_k}\}_{k\in\mathbb{N}}$  converges weakly to some $P\in\mathscr{C}(s,x_0,C_{p})$.
\et

\begin{remark}\label{r:4.6}
An important observation here is that the stability in Theorem~\ref{convergence} only holds in the class of martingale solutions with the same constants $C_{p}$, $p\in\N$.
\end{remark}

\subsection{Probabilistic extension of martingale solutions}
\label{s:ext}

In this section we collect two auxiliary results developed in \cite[Section 3]{HZZ19} and needed in order to extend the convex integration solutions to the full time interval $[0,\infty)$. Note that our current definition of martingale solution Definition~\ref{martingale solution} differs from the one in \cite[Definition 3.1]{HZZ19}. Notably, we include the supermartingale formulation of the relaxed energy  inequality which is suitable for Markov selections, unlike its form  in \cite{HZZ19}. The proof of Proposition~\ref{prop:1} below follows the  arguments of \cite[Proposition~3.2]{HZZ19} (see also  \cite[Proposition 5.10]{HZZ20}), and the proof of Proposition~\ref{prop:2}  is slightly different from \cite[Proposition~3.4]{HZZ19} due to the different formulation of (M3).

\bp\label{prop:1}
	Let $\tau$ be a bounded $(\mathcal{B}_{t})_{t\geq0}$-stopping time.
	For every $\omega\in \Omega_{0}$ there exists $Q_{\omega}\in\mathscr{P}(\Omega_{0})$ such that for $\omega\in \{x(\tau)\in L^2_{\sigma}\}$
	\begin{equation}\label{qomega}
	Q_\omega\big(\omega'\in\Omega_{0}; x(t,\omega')=\omega(t) \textrm{ for } 0\leq t\leq \tau(\omega)\big)=1,
	\end{equation}
	and
	\begin{equation}\label{qomega2}
	Q_\omega(A)=R_{\tau(\omega),x(\tau(\omega),\omega)}(A)\qquad\text{for all}\  A\in \mathcal{B}^{\tau(\omega)}.
	\end{equation}
	where $R_{\tau(\omega),x(\tau(\omega),\omega)}\in\mathscr{P}(\Omega_0)$ is a martingale solution to the Navier--Stokes system \eqref{1} starting at time $\tau(\omega)$ from the initial condition $x(\tau(\omega),\omega)$. Furthermore, for every $B\in\mathcal{B}$ the mapping $\omega\mapsto Q_{\omega}(B)$ is $\mathcal{B}_{\tau}$-measurable.
\ep

\begin{proof}
According to the stability with respect to the initial time and the initial condition in  Theorem~\ref{convergence}, for every $(s,x_0)\in [0,\infty)\times L_{\sigma}^2$ and fixed constants $C_{p}$, $p\in\N$, the set $\mathscr{C}(s,x_{0},C_{p})$ of all associated martingale  solutions  is compact with respect to the weak convergence of probability measures. Let $\mathrm{Comp}(\mathscr{P}(\Omega_0))$ denote the space of all compact subsets of $\mathscr{P}(\Omega_0)$ equipped with the Hausdorff metric. Using the stability from Theorem \ref{convergence} again together with \cite[Lemma 12.1.8]{SV79} we obtain that the map
$$
[0,\infty)\times L^{2}_{\sigma}\to\mathrm{Comp}(\mathscr{P}(\Omega_0)),\qquad (s,x_{0})\mapsto \mathscr{C}(s,x_{0},C_{p}),$$
is Borel measurable. Accordingly, \cite[Theorem 12.1.10]{SV79} gives the existence of a measurable selection. More precisely, there exists a Borel measurable map
$$
[0,\infty)\times L^{2}_{\sigma}\mapsto R_{s,x_{0}},
$$
such that $R_{s,x_{0}}\in \mathscr{C}(s,x_{0},C_{p})$.

As the next step, we recall that the canonical process $\omega$ on $\Omega_{0}$ is continuous in
$H^{-3}$,
hence $x:[0,\infty)\times\Omega_0\rightarrow H^{-3}$ is progressively measurable with respect to the canonical filtration $({\mathcal{B}}^{0}_{t})_{t\geq 0}$ and consequently it is also progressively measurable  with respect to the right continuous filtration $({\mathcal{B}}_{t})_{t\geq 0}$.
 In addition, $\tau$ is a stopping time with respect to the same filtration $({\mathcal{B}}_{t})_{t\geq 0}$. Therefore, it follows from \cite[Lemma~1.2.4]{SV79} that both $\tau$ and $x(\tau(\cdot),\cdot)1_{\{x(\tau)\in L^2_{\sigma}\}}$ are ${\mathcal{B}}_{\tau}$-measurable which by $\mathcal{B}([0,\infty)\times L^2_{\sigma})=\mathcal{B}([0,\infty))\times \mathcal{B}(L^2_{\sigma})$ implies that $(\tau,x(\tau(\cdot),\cdot)1_{\{x(\tau)\in L^2_{\sigma}\}})$ are ${\mathcal{B}}_{\tau}$-measurable. Combining this fact with the measurability of the selection $(s,x_{0})\mapsto R_{s,x_{0}}$ constructed above, we deduce that
\begin{equation}\label{eq:R1}
\Omega_{0}\to \mathscr{P}(\Omega_{0}),\qquad\omega\mapsto R_{\tau(\omega),x(\tau(\omega),\omega)1_{\{x(\tau(\omega),\omega)\in L^2_{\sigma}\}}}
\end{equation}
is ${\mathcal{B}}_{\tau}$-measurable as a composition of ${\mathcal{B}}_{\tau}$-measurable mappings. Recall that for every $\omega\in\Omega_{0}\cap \{x(\tau)\in L^2_{\sigma}\}$ this mapping gives a martingale solution starting at the deterministic time $\tau(\omega)$ from the deterministic initial condition $x(\tau(\omega),\omega)$. Hence, for $\omega\in \{x(\tau)\in L^2_{\sigma}\} $
$$
R_{\tau(\omega),x(\tau(\omega),\omega)}\big(\omega'\in\Omega_{0}; x(\tau(\omega),\omega')=x(\tau(\omega),\omega)\big)=1.
$$

Now, we apply \cite[Lemma 6.1.1]{SV79} and deduce that for every $\omega\in \Omega_{0}\cap \{x(\tau)\in L^2_{\sigma}\}$ there is a unique probability measure
\begin{equation*}
\delta_\omega\otimes_{\tau(\omega)}R_{\tau(\omega),x(\tau(\omega),\omega)}\in\mathscr{P}(\Omega_{0}),
\end{equation*}
such that for $\omega\in \Omega_{0}\cap \{x(\tau)\in L^2_{\sigma}\}$ \eqref{qomega} and \eqref{qomega2} hold.
This permits to concatenate, at the deterministic time $\tau(\omega)$, the Dirac mass $\delta_{\omega}$ with the martingale solution $R_{\tau(\omega),x(\tau(\omega),\omega)}$. 
Define 
\begin{align*}
Q_\omega=\begin{cases}
\delta_\omega\otimes_{\tau(\omega)}R_{\tau(\omega),x(\tau(\omega),\omega)},&\omega \in\{x(\tau)\in L^2_{\sigma}\} ,\\
\delta_{x(\cdot\wedge \tau(\omega))}, &\textrm{otherwise}.
\end{cases}
\end{align*}

In order to show that the mapping $\omega\mapsto Q_{\omega}(B)$  is ${\mathcal{B}}_{\tau}$-measurable for every $B\in {\mathcal{B}}$, it is enough to consider sets of the form $A=\{x(t_{1})\in \Gamma_{1},\dots, x(t_{n})\in\Gamma_{n}\}$ where $n\in\mathbb{N}$, $0\leq t_{1}<\cdots< t_{n}$, and $\Gamma_{1},\dots,\Gamma_{n}\in\mathcal{B}(H^{-3})$. Then by the definition of $\delta_\omega\otimes_{\tau(\omega)}R_{\tau(\omega),x(\tau(\omega),\omega)}$, we have
\begin{align*}
\begin{aligned}
\delta_\omega\otimes_{\tau(\omega)}R_{\tau(\omega),x(\tau(\omega),\omega)}(A)&={\bf 1}_{[0,t_{1})}(\tau(\omega))R_{\tau(\omega),x(\tau(\omega),\omega)}(A)\\
&\quad+\sum_{k=1}^{n-1}{\bf 1}_{[t_{k},t_{k+1})}(\tau(\omega)){\bf 1}_{\Gamma_{1}}(x(t_{1},\omega))\cdots {\bf 1}_{\Gamma_{k}}(x(t_{k},\omega))\\
&\qquad\quad\times R_{\tau(\omega),x(\tau(\omega),\omega)}\big(x(t_{k+1})\in \Gamma_{k+1},\dots, x(t_{n})\in\Gamma_{n}\big)\\
&\quad+{\bf 1}_{[t_{n},\infty)}(\tau(\omega)){\bf 1}_{\Gamma_{1}}(x(t_{1},\omega))\cdots {\bf 1}_{\Gamma_{n}}(x(t_{n},\omega)).
\end{aligned}
\end{align*}
Here the right hand side is ${\mathcal{B}}_{\tau}$-measurable as a consequence of the ${\mathcal{B}}_{\tau}$-measurability of \eqref{eq:R1} and $\tau$. Moreover, $\delta_{x(\cdot\wedge \tau(\omega))}$  is ${\mathcal{B}}_{\tau}$-measurable as a consequence of the ${\mathcal{B}}_{\tau}$-measurability of $x(\tau\wedge \cdot)$.
Thus the final result follows from $\{x(\tau)\in L^2_{\sigma}\}$ is ${\mathcal{B}}_{\tau}$-measurable.
\end{proof}

\bp\label{prop:2}
	Let $x_{0}\in L^{2}_{\sigma}$.
	Let $P$ be a martingale solution to the Navier--Stokes system \eqref{1} on $[0,\tau]$ starting at the time $0$ from the initial condition $x_{0}$. In addition to the assumptions of Proposition \ref{prop:1}, suppose that there exists a Borel  set $\mathcal{N}\subset\Omega_{0,\tau}$ such that $P(\mathcal{N})=0$ and for every $\omega\in \mathcal{N}^{c}$ it holds
	\begin{equation}\label{Q1}
	\aligned
	&Q_\omega\big(\omega'\in\Omega_{0}; \tau(\omega')=
	\tau(\omega)\big)=1.
	\endaligned\end{equation}
	Then the  probability measure $ P\otimes_{\tau}R\in \mathscr{P}(\Omega_{0})$ defined by
	\begin{equation}\label{eq:PR}
	P\otimes_{\tau}R(\cdot):=\int_{\Omega_{0}}Q_{\omega} (\cdot)\,P(\dif\omega)
	\end{equation}
	satisfies $P\otimes_{\tau}R= P$ on the $\sigma$-algebra $\sigma \{ x(t\wedge\tau);\, t\geq 0 \}$ and
	it is a martingale solution to the Navier--Stokes system \eqref{1} on $[0,\infty)$ with initial condition $x_{0}$.
\ep

\begin{proof}
The properties (M1) and (M2) in Definition \ref{martingale solution} follow by the same arguments  as in \cite[Proposition~3.4]{HZZ19}. In order to establish  (M3), we first deduce the $P\otimes_\tau R$-integrability of the process $E^{p}$, $p\in\mathbb{N}$. We write for $p\in\mathbb{N}$
	\begin{align*}
	\E^{P\otimes_\tau R}[|E^p(t)|]&=\int_{\Omega_{0}}\E^{Q_\omega}[|E^p(t)|]\dif P(\omega)\\
	&=\int_{\Omega_{0}}\E^{Q_\omega}[|E^p(t)|1_{\tau(\omega)>t }]\dif P(\omega)+\int_{\Omega_{0}}\E^{Q_\omega}[|E^p(t)|1_{\tau(\omega)\leq t }]\dif P(\omega)\\
	&\leq 2\E^P[|E^p(t\wedge \tau)|]+\int_{\Omega_{0}}\E^{Q_\omega}[|E^p(t)-E^p(\tau(\omega))|{1_{\tau(\omega)\leq t}}]\dif P(\omega).
	\end{align*}
	Using the supermartingale property under $Q_\omega$ we know for $t\geq \tau(\omega)$
	\begin{align*}
	&\E^{Q_\omega}\left[|E^p(t)-E^p(\tau(\omega))|{1_{\tau(\omega)\leq t}}\right]
	\leq \E^{Q_\omega}\left[\|x(t)\|_{L^2}^{2p}\right]+\E^{Q_\omega}\left[\|x(\tau(\omega))\|_{L^2}^{2p}\right]\\
	&\qquad+2p\E^{Q_\omega}\left[\int_{\tau(\omega)}^t\|x(l)\|_{L^{2}}^{2p-2}\|x(l)\|_{H^\gamma}^2\dif l\right]+C_{p}C_{G}\E^{Q_\omega}\left[\int_{\tau(\omega)}^t\|x(l)\|_{L^{2}}^{2p-2}\dif l\right]
	\\
	&\leq C\E^{Q_\omega}\left[\|x(\tau(\omega))\|_{L^2}^{2p}\right]+C\E^{Q_\omega}\left[\int_{\tau(\omega)}^{t}\|x(l)\|_{L^2}^{2p-2}\dif l\right],
	\end{align*}
where $C$ is independent of $\omega$. Then we first choose $p=1$ and by induction we deduce
		\begin{align*}
	&\E^{Q_\omega}\left[|E^p(t)-E^p(\tau(\omega))|{1_{\tau(\omega)\leq t}}\right]
	\leq  C\E^{Q_\omega}\left[\|x(\tau(\omega))\|_{L^2}^{2p}\right]+C(t),
	\end{align*}
where $C$ and $C(t)$ are independent of $\omega$.
Combining the above two inequalities we find
		\begin{align*}
	\E^{P\otimes_\tau R}[|E^p(t)|]&\leq C\E^{P}\left[\|x(\tau)\|_{L^2}^{2p}\right]+2\E^P[|E^p(t\wedge \tau)|]+C(t)\\
	&\leq C\E^{P}\left[\|x(\tau)\|_{L^2}^{2p}\right]+C\E^P\left[\|x(t\wedge \tau)\|_{L^2}^{2p}\right]\\
	&\qquad+C\E^P\left[\int_0^{t\wedge \tau}\|x( l)\|_{L^2}^{2p-2}\|x(l)\|_{H^\gamma}^2\dif l\right]+C\E^P\left[\int_0^{t\wedge \tau}\|x( l)\|_{L^2}^{2p-2}\dif l\right]+C(t)\\
	&\leq C\|x(0)\|_{L^2}^{2p}+C (t)
	<\infty,
	\end{align*}
	where we also first deduce the result for $p=1$ and use induction in the last step.

As the next step,  we  verify the a.s. supermartingale property.  To this end, let $t\geq s$, $A\in\mathcal{B}_s$, $s\notin T_{Q_\omega}$ with $T_{Q_\omega}$ denoting the exceptional set for the supermartingale property.
Note that if $t\geq\tau(\omega)$ and $s\notin T_{Q_\omega}$ then
\begin{align}\label{ine:sup}
\E^{Q_\omega} [(E^p(t)-E^p(t\wedge\tau(\omega))\vee s))1_A]\leq 0 .
\end{align}
Indeed, since  $R_{\tau(\omega),x(\tau(\omega),\omega)}$ is a martingale solution staring from $\tau(\omega)$,
using \eqref{qomega2} we find for $t\geq\tau(\omega)\geq s$
\begin{align*}
\E^{Q_\omega} [(E^p(t)-E^p((t\wedge\tau(\omega))\vee s))1_A]=\E^{Q_\omega} [(E^p(t)-E^p(\tau(\omega)))1_A]\leq 0 ,
\end{align*}
and for $t\geq s\geq \tau(\omega)$, $s\notin T_{Q_\omega}$
\begin{align*}
\E^{Q_\omega} [(E^p(t)-E^p((t\wedge\tau(\omega))\vee s))1_A]=\E^{Q_\omega} [(E^p(t)-E^p(s))1_A]\leq 0 .
\end{align*}
Hence \eqref{ine:sup} follows.

Then we have for a measurable set $T_B\subset[0,t]$
	\begin{align*}
	&\int_0^t1_{T_B}\E^{P\otimes_\tau R} [(E^p(t)-E^p(s))1_A]\dif s
	\\&=\int_0^t1_{T_B}\E^{P\otimes_\tau R} [(E^p(t)-E^p(s))1_{A\cap \{\tau>s\}}]\dif s+\int_0^t1_{T_B}\E^{P\otimes_\tau R} [(E^p(t)-E^p(s))1_{A\cap \{\tau\leq s\}}]\dif s
	\\
	&\leq\int_0^t1_{T_B}\E^{P\otimes_\tau R} [(E^p(t)-E^p(t\wedge\tau))1_{A\cap \{\tau>s\}}]\dif s+\int_0^t1_{T_B}\E^{P\otimes_\tau R} [(E^p(t\wedge\tau)-E^p(s))1_{A\cap \{\tau>s\}}]\dif s
	\\&\quad+\int_0^t\int_{\Omega_{0}}1_{T_B}\E^{Q_\omega} [(E^p(t)-E^p(s))1_{A\cap \{\tau\leq s\}}]\dif P(\omega) \dif s.
	\end{align*}
We show that this is non-positive which then completes the proof by similar argument as the argument after (A.4) in \cite{FR08}.
To this end, for the first term on the right hand side we apply Fubini's theorem together with the fact that $\tau(\omega)$ is a regular time for $R_{\tau(\omega),x(\tau(\omega),\omega)}$ and \eqref{ine:sup} to have
	\begin{align*}
	&\int_0^t1_{T_B}\int \E^{ Q_\omega} [(E^p(t)-E^p(t\wedge\tau))1_{A\cap \{\tau>s\}}]\dif P(\omega)\dif s
	\\\qquad&=\int\int_0^t1_{T_B} \E^{ Q_\omega} [(E^p(t)-E^p(t\wedge\tau(\omega)))1_{A\cap \{\tau(\omega)>s\}}]\dif s\dif P(\omega)\leq0.
	\end{align*}
For the second one we observe that for a.e. $s$
	 $$\E^{P\otimes_\tau Q} [(E^p(t\wedge\tau)-E^p(s))1_{A\cap \{\tau>s\}}]=\E^{P} [(E^p(t\wedge\tau)-E^p(s))1_{A\cap \{\tau>s\}}]\leq0.$$
	 The third term is estimated as follows.  As a consequence of \eqref{ine:sup}, for $s\notin T_{Q_\omega}$
	$$\E^{Q_\omega} [(E^p(t)-E^p(s))1_{A\cap \{\tau\leq s\}}]=\E^{Q_\omega} [(E^p(t)-E^p(t\wedge\tau(\omega))\vee s)1_{A\cap \{\tau(\omega)\leq s\}}]\leq0. $$
Then using Fubini's theorem the third term is also non-positive.
We then find that $E^p$ is an a.s. $(\cB_t)_{t\geq0}$  supermartingale and $E^p$ is $(\cB_t^0)_{t\geq0}$-adapted by definition. By Proposition \ref{prop:0},  we therefore deduce that $E^p$ is $(\cB_t^0)_{t\geq0}$ a.s. supermartingale. 	This completes the proof of (M3).
\end{proof}

\subsection{Non-uniqueness in law}
\label{s:1.2}

In this section, we combine  Theorem~\ref{Main results1} with the results of Section~\ref{s:ext} and establish non-uniqueness in law.
We use the notations of Section~\ref{s:1.1} as well as the previous subsections of Section~\ref{sec:mar}. Namely,
$B$ is   a $GG^{*}$-Wiener process on a probability space $(\Omega, \mathcal{F},\mathbf{P})$ and $(\mathcal{F}_{t})_{t\geq0}$ denotes its normal filtration, i.e. the canonical filtration  augmented by all the $\mathbf{P}$-negligible sets; $\Omega_{0}=C([0,\infty);H^{-3})\cap L^2_{\rm{loc}}([0,\infty);L^2_\sigma)$ is the canonical path space for the velocity and $(\mathcal{B}_{t})_{t\geq 0}$ is the right continuous version of the canonical filtration on $\Omega_{0}$.

As the next step, for every $\omega\in \Omega_0$ we define a process $M_{t,0}^\omega$ similarly to  Definition \ref{martingale solution}, that is,
\begin{equation}\label{eq:M}
M_{t,0}^\omega:= x(t,\omega)-x(0,\omega)+\int^t_0 [\mathbb{P} \div\big(x(r,\omega)\otimes x(r,\omega)\big)-\Delta x(r,\omega)]\dif r
\end{equation}
and for every $\omega\in \Omega_0$ we let
\begin{equation}\label{eq:Z}
Z^\omega(t):= M_{t,0}^\omega+\int_0^t\mathbb{P}\Delta e^{(t-r)\Delta} M_{r,0}^\omega \dif r.
\end{equation}
We refer to \cite[Section 3.3]{HZZ19} for a further discussion of  the main ideas behind these definitions.

In addition, by definition of $Z$ and $M$ together with the regularity of trajectories in $\Omega_{0}$, it follows that for every $\omega\in \Omega_0$, $Z^\omega\in C([0,\infty);H^{-3-\delta})$ for any $\delta>0$.
Motivated by the definition of the stopping time $\mathfrak{t}$ in \eqref{stopping time}, for $n\in\mathbb{N}$ and for  $\delta\in(0,1/12)$ to be determined below we let
\begin{equation*}
\aligned\tau^n(\omega)&=\inf\left\{t\geq 0, \|Z^\omega(t)\|_{{H^{1-\delta}}}>\frac{1}{C_S}-\frac{1}{n}\right\}\wedge \inf\left\{t>0,\|Z^\omega\|_{C_t^{\frac{1}{2}-2\delta}{L^2}}>\frac{1}{C_S}-\frac{1}{n}\right\} \\
&\qquad\wedge \inf\left\{t\geq0,\|Z^\omega(t)\|_{L^2}>\frac{a^{\beta b-b^2\beta}}{\sqrt{12}} -\frac1n\right\}\wedge 1,
\endaligned
\end{equation*}
where $C_S$ is the Sobolev constant for $\|f\|_{L^\infty}\leq C_S \|f\|_{H^{\frac{3+\sigma}{2}}}$ with $\sigma>0$.
We observe that the sequence $(\tau^{n})_{n\in\mathbb{N}}$ is nondecreasing and define
\begin{equation}\label{eq:tauL}
\tau:=\lim_{n\rightarrow\infty}\tau^n.
\end{equation}
Note that without additional regularity of the trajectory $\omega$, it holds true that $\tau^{n}(\omega)=0$.
However, under $P$ we may use the regularity assumption on $G$ to deduce that  $Z\in CH^{1-\delta}\cap C^{1/2-\delta}_{\mathrm{loc}}L^2$ $P$-a.s.
By \cite[Lemma 3.5]{HZZ19} we obtain that $\tau^{n}$ is a $(\mathcal{B}_t)_{t\geq0}$-stopping time and consequently also  $\tau$ is a $(\mathcal{B}_t)_{t\geq 0}$-stopping time as an increasing limit of stopping times.

 We denote by $P$ the law of the convex integration solution $u$ obtained from Theorem \ref{Main results1} with $e(t)=c_0+c_1t$, $c_0\geq 4$, $c_1>0$, and prove the following result.

 \bp\label{prop:ext}
	The probability measure $P$ is a martingale solution to the Navier--Stokes system \eqref{1} on $[0,\tau]$ in the sense of Definition \ref{def:martsol}, where $\tau$ was defined in \eqref{eq:tauL}. The corresponding constants $C_{p}$, $p\in\N$, in {\rm (M3)} depend on $c_{0}$, $c_{1}$ and $C_{G}$.
 \ep
 \begin{proof} By the same argument as in the proof of \cite[Proposition 3.7]{HZZ19} we obtain $\tau(u)=\mathfrak{t}$ $\mathbf{P}$-a.s., with $u$ the solution obtained in Theorem \ref{Main results1}, and it is sufficient to check
 	(M3) under $P$. By the convex integration method in  Section~\ref{s:1.1}, we know that under $P$ it holds $\|x(t)\|_{L^2}^2=c_{0}+c_1t$  as well as $\|x(t)\|_{H^\gamma}^2\leq c_2$ for $t\in[0,\tau]$. In the following we prove that for a suitable choice of the constants $C_{p}$ in the definition processes $E^{p}$, $P$-a.e. trajectory $E^p(t\wedge \tau)$ is non-increasing with respect to $t$. This implies in particular the desired supermartingale property.
 	In fact, for every $t\geq s\geq0$
 	\begin{align*}
	E^1(t\wedge\tau)-E^1(s\wedge\tau)&=
\|x(t\wedge \tau)\|^2_{L^2}-\|x(s\wedge \tau)\|^2_{L^2}+ 2\int_{s\wedge\tau}^{t\wedge\tau} \|x(l)\|^2_{H^{\gamma}} \dif l- C_1C_{G}(t\wedge\tau-s\wedge\tau)
 	\\&\leq (c_1+2c_2-C_1C_{G})(t\wedge \tau-s\wedge \tau)\leq 0,
 	\end{align*}
where the last inequality holds provided  $C_1C_{G}>c_1+2c_2$. Here,  we require in particular that  $C_G>0$. However, see Remark~\ref{r:CG} for a modification applying to the deterministic setting. Hence, under this condition,  $E^1(t\wedge \tau)$ is a supermartingale under $P$.

The same approach applies to general  $E^p$, $p\in\mathbb{N}$. Indeed, since $\tau\leq 1$ by our construction
 	\begin{align*}
&	E^p(t\wedge\tau)-E^p(s\wedge\tau)\\
&=
 	\|x(t\wedge \tau)\|^{2p}_{L^2}-\|x(s\wedge \tau)\|^{2p}_{L^2}+ 2p\int_{s\wedge\tau}^{t\wedge\tau} \|x(l)\|_{L^2}^{2p-2}\|x(l)\|^2_{H^{\gamma}} \dif l- C_pC_{G}\int_{s\wedge\tau}^{t\wedge\tau}\|x(l)\|_{L^2}^{2p-2}\dif l
 	\\
	&\leq (c_{0}+c_1(t\wedge\tau))^{2p}-(c_{0}+c_1(s\wedge\tau))^{2p}+ \big(2pc_2^{p}-C_pC_{G}c_0^{p-1}\big)(t\wedge \tau-s\wedge \tau)
 	\\
	&\leq 2p(c_{0}+c_1)^{2p-1}c_1(t\wedge\tau-s\wedge\tau)+ \big(2pc_2^{p}-C_pC_{G}c_0^{p-1}\big)(t\wedge \tau-s\wedge \tau)
 	\leq 0,
 	\end{align*}
 	provided $C_p $ is large enough satisfying $C_pC_{G}c_0^{p-1}\geq 2pc_2^p+2p(c_{0}+c_1)^{2p-1}c_1$.
 Accordingly,  $E^p(t\wedge \tau)$ is a supermartingale under $P$.
 	\end{proof}

 As the next step, we employ the results of Section~\ref{s:ext} while following the ideas of \cite[Proposition 3.8]{HZZ19} in order to extend the solution $P$ beyond the stopping time $\tau$. Consequently, we deduce non-uniqueness in law.

 \bp\label{prp:ext2}
 	The probability measure $P\otimes_{\tau}R$ is a martingale solution to the Navier--Stokes system \eqref{1} on $[0,\infty)$ in the sense of Definition \ref{martingale solution}. Furthermore, there exist constants $C_{p}\geq 0$, $p\in\N$ such that  martingale solutions to \eqref{1} associated with $C_{p}\geq 0$, $p\in\N$, are not unique.
 \ep

 \begin{proof}
After the application of Proposition~\ref{prop:1}, the key  condition \eqref{Q1} for $Q_{\omega}$ in Proposition~\ref{prop:2} can be verified by the same argument as in \cite[Proposition~3.8]{HZZ19}. As a consequence, the properties (M1)-(M3) follow and the first claim is proved.
Now, it only remains  to show the non-uniqueness in law. We present two proofs. On the one hand, we show that there are different martingale solutions which stem from different convex integration solutions. On the other hand, we show that there are convex integration solutions which differ from the usual Leray martingale solutions.

 As a consequence of the last claim in  Theorem~\ref{Main results1} together with Proposition~\ref{prop:ext}, we can construct two different  martingale solutions $P_{1}, P_{2}$ on $[0,\tau]$ with the same initial data and different energies $e_{1}, e_2$ of the form $e_{i}(t)=c_{0}+c_{1,i}t$, $i=1,2$, and with the same constants $C_{p}$, $p\in\N$ in (M3). Via the first claim in Proposition~\ref{prp:ext2}, the measures  $P_{1}, P_2$ give raise  to two different martingale solutions $P_{1}\otimes_\tau R$ and $P_2\otimes_\tau R$ on $[0,\infty)$, where $R$ in both cases was selected from the set of martingale solutions associated to the same constants $C_{p}$, $p\in\N$.

To compare with a Leray martingale solution, we choose the energy $e(t)=c_0+c_1t$ with $c_1>C_{G}$. Using Theorem \ref{Main results1} and Proposition~\ref{prop:ext}, we obtain one solution $P$ on $[0,\tau]$ such that $\|x(t)\|_{L^2}^2=e(t)$ on $[0,\tau]$ $P$-a.s. Then $P\otimes_\tau R$ is  a martingale solution on $[0,\infty)$ and taking expectation under   $P\otimes_\tau R$ we find
 $$
 \E^{ P\otimes_\tau R}[\|x(\tau)\|_{L^2}^2]=c_0+c_1\E^{ P\otimes_\tau R}[\tau]
 .$$
 Concerning the law $Q$ of the Leary solution, the classical energy inequality yields
 $$
 \E^{ Q}[\|x(\tau)\|_{L^2}^2]\leq c_0+C_{G}\E^{Q}[\tau]<c_0+c_1\E^{Q}[\tau].
 $$
Moreover, $\E^{ P\otimes_\tau R}[\tau]=\E^{Q}[\tau]$ as $\tau$ is a function of the process $Z$, which under both probability measures $P\otimes_\tau R$ and $Q$ solves the linear equation \eqref{linear} hence its law under $P\otimes_\tau R$ and $Q$ is the same. Thus, the two  martingale solutions are different.

Finally, we observe that there is a common choice of constants $C_{p}$, $p\in\N$, such that all the probability measures $P_{1}\otimes_{\tau} R$, $P_{2}\otimes_{\tau} R$, $P\otimes_{\tau} R$ and $Q$ above are martingale solutions with these constants in (M3). Precisely, each $C_{p}$ is given by the maximum of the constants associated to different probability measures.
\end{proof}

\begin{remark}\label{r:4.11}
The second claim in Proposition~\ref{prp:ext2} implies that there are multiple martingale solutions belonging to the same stability class as discussed in Remark~\ref{r:4.6}. This is needed for  non-uniqueness of Markov selections in the sequel.
\end{remark}

\subsection{Selection of non-unique  Markov solutions}\label{s:mark}

In this section, we present the abstract framework of almost sure Markov processes from \cite{FR08} and combine it with the non-uniqueness in law from Proposition~\ref{prp:ext2} in order to prove Theorem~\ref{th:ma2}. Let us start with a result on the existence of regular conditional probability distribution. Throughout this section, let $T\geq 0$.

\bt\label{rcpd}
Given $P\in \mathscr{P}(\Omega_0)$, there exists a regular conditional probability distribution $P(\cdot|\mathcal{B}_{T}^0)(\omega)$, $\omega\in\Omega_{0}$, of $P$ with respect to $\mathcal{B}_{T}^0$ such that
\begin{enumerate}
	\item For every $\omega\in\Omega_{0}$, $P(\cdot|\mathcal{B}_{T}^0)(\omega)$ is a probability measure on $(\Omega_{0},{\mathcal{B}})$.
	\item For every $A\in{\mathcal{B}}$, the mapping $\omega\mapsto P(A|\mathcal{B}_{T}^0)(\omega)$ is $\mathcal{B}^{0}_{T}$-measurable.
	\item There exists a $P$-null set $N\in\mathcal{B}_{T}^0$ such that for any $\omega\notin N$
	$$P\left(\{\tilde{\omega};\,x(s,\tilde{\omega})=x(s,\omega),0\leq s\leq T\}|\mathcal{B}_{T}^0\right)(\omega)=1.$$
	\item For any  set $A\in \mathcal{B}_{T}^0$ and any  set $B\in\mathcal{B} $
	$$P\left(x|_{[0,T]}\in A, x|_{[T,\infty)}\in B\right)=\int_{\tilde{\omega}|_{[0,T]}\in A}P\left(x|_{[T,\infty)}\in B|\mathcal{B}_{T}^0\right)(\tilde\omega)\dif P(\tilde\omega).$$
\end{enumerate}
\et

According to   \cite[Theorem~6.1.2]{SV79} we obtain the following reconstruction result.

\bt\label{th:m2}
Let $\omega\mapsto Q_\omega$ be a mapping from $\Omega_0$ to $\mathscr{P}(\Omega_0)$ such that for any $A\in {\mathcal{B}}$, $\omega\mapsto Q_\omega(A)$ is $\mathcal{B}^0_T$-measurable and for any $\omega\in \Omega_0$
$$Q_\omega\big(\tilde{\omega}\in \Omega: x(T,\tilde{\omega})=x(T,\omega)\big)=1.$$
Then for any $P\in \mathscr{P}(\Omega_0)$, there exists a unique $P\otimes_{T}Q\in \mathscr{P}(\Omega_0)$ such that
$$(P\otimes_{T} Q)(A)=P(A)\  \mbox{for all}\  A\in \mathcal{B}_{T}^0,$$
and for $P\otimes_{T} Q$-almost all $\omega\in \Omega_0$
$$\delta_\omega\otimes_{T} Q_\omega=(P\otimes_{T}Q)\left(\cdot|\mathcal{B}_{T}^0\right)(\omega).$$

\et

  We say $P\in \mathscr{P}_{L^2_\sigma}(\Omega_0)\subset \mathscr{P}(\Omega_0)$ is concentrated on the paths with values in $L^2_\sigma$ if there exists $A\in \mathcal{B}$ with $P(A)=1$ such that $A\subset \{\omega\in\Omega_0;\,\omega(t)\in L^2_\sigma \ \mbox{for all}\  t\geq0\}$.
It is clear that $\mathcal{B}(\sP_{L^2_\sigma}(\Omega_0))=\mathcal{B}(\sP(\Omega_0))\cap \sP_{L^2_\sigma}(\Omega_0)$. We also use $\mathrm{Comp}(\sP_{L^2_\sigma}(\Omega_0))$ to denote the space of all compact subsets of $\sP_{L^2_\sigma}(\Omega_0)$. Throughout the rest of this section we fix the constants $C_{p}$, $p\in\N$, as in Proposition~\ref{prp:ext2} and  we denote  by $\sC(x_0,C_{p})$ the set of dissipative  martingale solutions starting from $x_0\in L^2_\sigma$ at time $s=0$ and satisfying (M3) with  $C_{p}$, $p\in\N$.

The shift operator $\Phi_t:\Omega_0\to \Omega_0^t$ is defined by
$$\Phi_t(\omega)(s):=\omega(s-t), \quad s\geq t.$$

Now, we have all in hand to recall the definition of almost sure Markov process.

\bd\label{def:ad}
A family $(P_{x_0})_{x_0\in L^2_\sigma}$ of probability measures in $\mathscr{P}_{L^2_\sigma}(\Omega_0)$, is called an almost sure Markov family provided
\begin{enumerate}
\item  for every $A\in\mathcal{B}$, the mapping $x_0\mapsto P_{x_0}(A)$ is $\mathcal{B}(L^2_\sigma)/\mathcal{B}([0,1])$-measurable,
\item for  every $x_0\in L^2_\sigma$ there exists a set $\mathfrak{T}\subset(0,\infty)$ with zero Lebesgue measure such that for $T\notin \mathfrak{T}$
$$P_{x_0}\left(\cdot|\mathcal{B}_{T}^0\right)(\omega)=P_{\omega(T)}\circ \Phi_{T}^{-1}.$$
\end{enumerate}
\ed

An almost surely Markov family can be obtained from a so-called almost sure pre-Markov family through a selection procedure.

\bd\label{def:pre}
Let the mapping $L^2_\sigma\to \mathrm{Comp}(\sP_{L^2_\sigma}(\Omega_0))$, $  x_0\mapsto \sC(x_0,C_{p}) ,$ be Borel measurable. We say that $(\sC(x_0))_{x_0\in L^2_\sigma}$ forms an almost surely pre-Markov family if for each $x_0\in L^2_\sigma$, $P\in \sC(x_0,C_{p})$ there exists a set $\mathfrak{T}\subset (0,\infty)$ with zero Lebesgue measure such that the following holds true for $T\notin \mathfrak{T}$
\begin{enumerate}
  \item (Disintegration) there is a $P$-null set $N\in \mathcal{B}_{T}^0$ such that for $\omega\notin N$,
  $$x(T,\omega)\in L^2_\sigma, \quad P\left(\Phi_{T}(\cdot)|\mathcal{B}_{T}^0\right)(\omega)\in \sC(x(T,\omega),C_{p}),$$
  \item (Reconstruction) if a mapping $\Omega_0\to\sP_{L^2_\sigma}(\Omega_0)$, $ \omega\mapsto Q_\omega,$ satisfies the assumptions of Theorem~\ref{th:m2} and there is a $P$-null set $N\in\mathcal{B}_{T}^0$ such that for all $\omega\notin N$
      $$x(T,\omega)\in L^2_\sigma,\quad Q_\omega\circ \Phi_{T}\in \sC(x(T,\omega),C_{p}),$$
      then $P\otimes_T Q\in \sC(x_0,C_{p})$.
\end{enumerate}
\ed

Finally, we  recall the following abstract Markov selection theorem, cf.  \cite[Theorem~2.8]{FR08}.   The only missing point is  the maximization of a given functional, however, this can be achieved easily by choosing  this functional in the selection procedure as the first functional to be maximized.

\bt\label{selection}
Let $(\sC(x_0,C_{p}))_{x_0\in L^2_\sigma}$ be an almost surely pre-Markov family. Suppose that for each $x_0\in L^2_\sigma$, $\sC(x_0,C_{p})$ is non-empty and convex. Then there exists a measurable
selection $L^2_\sigma\to\sP_{L^2_\sigma}(\Omega_0)$, $ x_0\mapsto P_{x_0} ,$ such that $P_{x_0}\in\sC(x_0,C_{p})$ for every $x_0\in L^2_\sigma$ and $(P_{x_0})_{x_0\in L^2_\sigma}$ is an almost surely Markov family. In addition, if $F:L^2_\sigma\to \R$ is a bounded continuous function and $\lambda>0$ then  the selection can be chosen for every $x_0\in L^2_\sigma$ to maximize
\begin{equation}\label{eq:func}
\E^{P}\left[\int_{0}^{\infty}e^{-\lambda s}F(x(s))\dif s\right]
\end{equation}
among all martingale solutions $P$ with the initial condition $x_0$.
\et

As the next step, we verify  that $(\sC(x_0,C_{p}))_{x_0\in L^2_\sigma}$ has the disintegration as well as the reconstruction property from Definition \ref{def:pre}.

\bl
The family $(\sC(x_0,C_{p}))_{x_0\in L^2_\sigma}$ satisfies the disintegration property in Definition \ref{def:pre}.
\el

\begin{proof}
Fix $x_0\in L^2_\sigma$, $P\in \sC(x_0,C_{p})$ and let $T$ be a regular time of $P$.  Let $P(\cdot|\mathcal{B}_{T}^0)(\omega)$ be a regular conditional probability distribution of $P$ with respect to $\mathcal{B}_{T}^0$. We want to show that there is a $P$-null set $N\in \mathcal{B}_{T}^0$ such that for all $\omega\notin N$
$$x(T,\omega)\in L^2_\sigma,\quad P\left(\Phi_{T}(\cdot)|\mathcal{B}_{T}^0\right)(\omega)\in \sC(x(T,\omega),C_{p}).$$

Next, we shall verify that $P(\Phi_{T}(\cdot)|\mathcal{B}_{T}^0)(\omega)$ satisfies the conditions (M1), (M2), (M3) in Definition~\ref{martingale solution} with the initial condition $x(T,\omega)$ and the initial time $0$ or alternatively that $P(\cdot|\mathcal{B}_{T}^0)(\omega)$ satisfies (M1), (M2), (M3) in Definition~\ref{martingale solution} with the initial condition $x(T,\omega)$ and the initial time $T$.

(M1): Due to (3) from Theorem \ref{rcpd}, it follows that outside of a $P$-null set in $\mathcal{B}^{0}_{T}$, it holds
$$
P\left(\{ \tilde\omega;\,x(T,\tilde\omega)=x(T,\omega) \}  |\mathcal{B}_{T}^0\right)(\omega)=1.
$$
In other words, $P(\cdot|\mathcal{B}_{T}^0)(\omega)$ has the correct initial value at the initial time $T$.
Set
$$S_T=\{\omega\in \Omega_0: \omega|_{[0,T]}\in L^\infty([0,T];L^2_\sigma)\cap L^{2}([0,T];H^{\gamma})\},$$
$$S^T=\{\omega\in \Omega_0: \omega|_{[T,\infty)}\in L^\infty_{\rm loc}([T,\infty);L^2_\sigma)\cap L^{2}_{\rm loc}([T,\infty);H^{\gamma})\}.$$
We obtain by (M1) for $P$ that
$$1=P(S_T\cap S^T)=\int_{S_T}P(S^T|\mathcal{B}^{0}_T)(\tilde\omega)\dif P(\tilde\omega).$$
Consequently, it holds $P(S^T|\mathcal{B}^{0}_T)(\tilde\omega)=1$ for $P$-a.s. $\tilde{ \omega}$.
We denote the union of the above two  $P$-null sets by $N_1$.

(M2): Using \cite[Proposition B.1]{FR08} (cf. \cite[Lemma B.3]{GRZ09}), there exists a $P$-null set $N_2\in \mathcal{B}^{0}_{T}$
such that for all ${\omega}\notin N_2$, $P( \Phi_{T}(\cdot)|\mathcal{B}^{0}_{T})(\omega)$ satisfies (M2).

(M3): Similarly, \cite[Proposition B.4]{FR08} implies that for $P$-a.s. $\tilde{ \omega}$ that $(E_t^p)_{t\geq T}$ is an almost sure $((\mathcal{B}_t^0)_{t\geq T},P(\cdot|\mathcal{B}_T^0)(\tilde{ \omega}))$-supermartingale. This gives a $P$-null set $N_3$.

We complete the proof by choosing the null set $N=\cup_{i=1}^3N_i$.
\end{proof}

\bl
The family $(\sC(x_0,C_{p}))_{x_0\in L^2_\sigma}$ satisfies the reconstruction property in Definition \ref{def:pre}.
\el

\begin{proof}
Fix $x_0\in L^2_\sigma$, $P\in \sC(x_0,C_{p})$ and let $T$ be a regular time of $P$. Let $Q_\omega$ be as in Definition~\ref{def:pre} (2). We shall show that  $P\otimes_{T}Q\in\sC(x_0,C_{p})$ hence we need to verify the conditions (M1), (M2), (M3) from Definition~\ref{martingale solution}.
This follows by the same arguments as in  Proposition \ref{prop:2} with the choice $\tau=T$.
\end{proof}

\begin{proof}[Proof of Theorem \ref{th:ma2}]
The proof follows the lines of   \cite[Theorem 12.2.4]{SV79}. In particular, in view of the non-uniqueness in law from Proposition~\ref{prp:ext2} there exists an initial value $x_{0}\in L^{2}_{\sigma}$ giving raise to  at least two martingale solutions on $[0,\infty)$. In particular, there are   martingale solutions $P$, $Q$ and a functional $F:L^2_\sigma\to \R$ such that
$$
\E^{P}\left[\int_{0}^{\infty}e^{-\lambda s}F(x(s))\dif s\right]>\E^{Q}\left[\int_{0}^{\infty}e^{-\lambda s}F(x(s))\dif s\right].
$$
Now, applying Theorem~\ref{selection} once with $F$ and once with $-F$ we obtain selections $(P^{+}_{x_0})_{x_0\in L^2_\sigma}$ and $(P^{-}_{x_0})_{x_0\in L^2_\sigma}$, respectively. In particular, it holds
$$
\E^{P^{+}_{x_0}}\left[\int_{0}^{\infty}e^{-\lambda s}F(x(s))\dif s\right]\geq \E^{P}\left[\int_{0}^{\infty}e^{-\lambda s}F(x(s))\dif s\right],
$$
and
$$
\E^{Q}\left[\int_{0}^{\infty}e^{-\lambda s}F(x(s))\dif s\right] \geq \E^{P^{-}_{x_0}}\left[\int_{0}^{\infty}e^{-\lambda s}F(x(s))\dif s\right].
$$
In other words, the two  Markov selections are different.
\end{proof}

\subsection{Non-unique semiflows in the deterministic setting}\label{s:det}

\begin{proof}[Proof of Corollary~\ref{cor:det}]
Let us just give a brief sketch of the proof. The convex integration result of Theorem~\ref{Main results1} applies in particular to the case $G=0$. Note that in this case $z\equiv 0$ and therefore the stopping time $\mathfrak{t}$ defined through \eqref{stopping time} equals to one and does not play any role in the analysis. But actually already the seminal paper by Buckmaster and Vicol \cite{BV19a} provides the necessary non-uniqueness result of weak solutions with a prescribed energy. These solutions can be simply extended to $[0,\infty)$ by Leray solutions. In view of Remark~\ref{r:CG}, one can then formulate  the corresponding relaxed energy inequality
similarly to Definition~\ref{martingale solution} and verify the stability, shift and concatenation property of the set of solutions required by the selection result of Cardona and Kapitanskii \cite{CorKap}. This  implies existence of a semiflow. However, the semiflow property is only satisfied for a.e. time, similarly to the a.s. Markov property. By including the energy as an additional variable similarly to  \cite{Bas20,BreFeiHof19,BreFeiHof19B}, it is possible to  obtain a semiflow in the usual sense. The non-uniqueness then follows by  a similar argument as in the proof of Theorem~\ref{th:ma2}.
\end{proof}

\section{Construction of global probabilistically strong solutions}
\label{s:in}

This section is devoted to the proof of our last  main result,  Theorem~\ref{th:ma4}. The  goal is to establish existence of non-unique global-in-time probabilistically strong solutions to the Navier--Stokes system \eqref{1} for every given divergence free initial condition in $L^{2}_{\sigma}$. More precisely, we intend to overcome the limitation given by the stopping time required in almost all the available  convex integration results in the stochastic setting (except for \cite{CFF19}) and also in Section~\ref{s:1.1}. In \cite{CFF19}, stopping times were not needed due to a suitable transformation which only seems to work  in an Euler setting with a linear multiplicative noise, cf. also \cite[Remark 2.2]{HZZ20}. Recall that the stopping time was used in order to control the noise throughout the convex integration scheme uniformly in the randomness variable $\omega$.  In addition, the initial value was part of the construction and as such could not have been prescribed.

The main idea of this section is to use convex integration  to construct probabilistically strong solutions for every given initial condition in $L^{2}_{\sigma}$. While these solutions also only exist up to a suitable stopping time, we may repeat the construction: we use the final value at the stopping time of  any such convex integration solution  as a new initial condition for the convex integration procedure. This way we are able  to extend the convex integration solutions as probabilistically strong solutions defined on the whole time interval $[0,\infty)$. It is therefore not necessary to pass to the framework of martingale solutions as done in Section~\ref{sec:mar}.

In order to prove  Theorem~\ref{th:ma4} we modify the convex integration scheme developed in Section \ref{s:1.1}. The notation remains mostly the same and we explain the necessary changes in the sequel. First of all, we intend to prescribe an arbitrary random initial condition $u_{0}\in L^{2}$ $\mathbf{P}$-a.s. independent of the given Wiener process $B$. Therefore, let $(\mathcal{F}_{t})_{t\geq0}$ be the augmented joint canonical filtration on $(\Omega,\mathcal{F})$ generated by $B$ and $u_{0}$. Then $B$ is a $(\mathcal{F}_{t})_{t\geq0}$-Wiener process and $u_{0}$ is $\mathcal{F}_0$-measurable.
We include the initial value into the linear part $z$, namely, we let  $z$ satisfy the stochastic Stokes equation \eqref{linear} with $z(0)=u_0$.
As before, the iteration is indexed by a parameter $q\in\mathbb{N}_{0}$. At each step $q$, a pair $(v_q, \mathring{R}_q)$ is constructed solving the following system
\begin{equation}\label{induction ps}
\aligned
\partial_tv_q-\Delta v_q +\div((v_q+{z_q})\otimes (v_q+{z_q}))+\nabla p_q&=\div \mathring{R}_q,
\\
\div v_q&=0,\\
v_{q}(0)&=0.
\endaligned
\end{equation}
{Here we decompose $z=z^{in}+Z$ with $z^{in}(t)=e^{t\Delta}u_{0}$ and define $z_q:=z^{in}+Z_q=z^{in}+\mP_{\leq f(q)}Z$ with $f(q)=\lambda_{q+1}^{\alpha/8}$.} Note that the above yields the correct initial value $u(0)=v(0)+z(0)=\lim_{q\to\infty}v_{q}(0)+u_{0}=u_{0}$. 
The parameters $\lambda_q$, $ \delta_q$, $q\in\mathbb{N}_{0}$, as well as the mollification parameter $\ell$ all  retain the same structure as in Section \ref{s:1.1}, but the value of the determining parameters $a$,  $b$, $\alpha$, $\beta$ will be possibly different as further conditions need to be satisfied. Details are given in Section~\ref{s:par} below.

Let $L\geq 1$ sufficiently large be given.   Let $N\geq 1$ be given  and assume in addition that $\mathbf{P}$-a.s.
\begin{equation}\label{eq:u0}
\|u_{0}\|_{L^{2}}\leq N.
\end{equation}
We keep this additional assumption on the initial condition throughout the convex integration step in Proposition~\ref{p:iteration} and relax it later in the proof of Theorem~\ref{thm:6.1}.
 Define the following stopping time for $0<\delta<\frac1{12}$ 
\begin{equation}\label{stopping time ps}
\aligned
T_L:=&\inf\{t\geq0, \|Z(t)\|_{H^{1-\delta}}\geq L/ C_S\}\wedge \inf\{t\geq0,\|Z\|_{C_t^{1/2-2\delta}L^2}\geq L/C_S\}
\wedge  L,
\endaligned
\end{equation}
which is $\mathbf{P}$-a.s. strictly positive.
Moreover, for  $t\in[0, T_L]$ it holds
\begin{equation}\label{z ps}
\| Z_q(t)\|_{L^\infty}\leq L\lambda_{q+1}^{\frac\alpha8}, \quad\|\nabla Z_q(t)\|_{L^\infty}\leq L\lambda_{q+1}^{\frac\alpha4}, \quad \|Z_q\|_{C_t^{\frac{1}{2}-2\delta}L^\infty}\leq L\lambda_{q+1}^{\frac\alpha4}.
\end{equation}
We also suppose that there is a deterministic constant $M_{L}(N)\geq (L+N)^2$ such that $\|z_q\|_{L^2}\leq \|z^{in}\|_{L^{2}}+\|Z_q\|_{L^{2}}\leq M_{L}(N)^{1/2}$. In the following we write $M_L$ instead of $M_L(N)$ for simplicity.
We denote $A=4M_L$, $\sigma_{q}=2^{-q}$, $q\in\mathbb{N}_{0}\cup\{-1\}$,   $\gamma_q=2^{-q}$, $q\in\N_{0}\setminus\{3\}$, and  $\gamma_3=K$. Here  $K\geq1$ is a large constant which used  in the proof of Theorem~\ref{thm:6.1} to distinguish different solutions.

Under the above assumptions, our main iteration  reads as follows.

\begin{proposition}\label{p:iteration}
Let $L,N\geq 1$ and assume \eqref{eq:u0}. There exists a choice of parameters $a, b, \beta$ such that the following holds true: Let $(v_{q},\mathring{R}_{q})$ for some $q\in\N_{0}$ be an $(\mathcal{F}_{t})_{t\geq 0}$-adapted solution to \eqref{induction ps} satisfying
\begin{equation}\label{inductionv ps}
\|v_{q}(t)\|_{L^{2}}\leq\begin{cases}
M_0 (M_L^{1/2}\sum_{ r=1}^{q}\delta_{r}^{1/2}+\sum_{r=1}^{q}\gamma_{r}^{1/2})+3M_0(M_L+qA)^{1/2},&t\in (\frac{\sigma_{q-1}}2\wedge T_L,  T_L],\\
0, &t\in [0,\frac{\sigma_{q-1}}2\wedge T_L],
\end{cases}
\end{equation}
for a universal constant $M_0$,
\begin{align}\label{inductionv C1}
 \|v_q\|_{C^1_{t,x}}&\leq \lambda_q^4M_L^{1/2},\quad t\in[0, T_L],
\end{align}
\begin{align}\label{eq:R}
\|\mathring{R}_q(t)\|_{L^1}\leq
\delta_{q+1}M_L,\quad t\in (\sigma_{q-1}\wedge T_L,T_L],
\end{align}
\begin{align}\label{bd:R}
\|\mathring{R}_{q}(t)\|_{L^1}\leq M_L+qA,\quad t\in[0, T_L].
\end{align}
 Then    there exists an $(\mathcal{F}_{t})_{t\geq 0}$-adapted process $(v_{q+1},\mathring{R}_{q+1})$ which solves \eqref{induction ps} and satisfies
 \begin{equation}\label{iteration ps}
\|v_{q+1}(t)-v_q(t)\|_{L^2}\leq \begin{cases}
M_0 (M_L^{1/2}\delta_{q+1}^{1/2}+\gamma_{q+1}^{1/2}),& t\in (4\sigma_q\wedge T_L,T_L],\\
M_0 ((M_L+qA)^{1/2}+\gamma_{q+1}^{1/2}),& t\in (\frac{\sigma_q}2\wedge T_L,4\sigma_q\wedge T_L],\\
0,
&t\in [0,\frac{\sigma_q}2\wedge T_L],
\end{cases}
\end{equation}
\begin{equation}\label{iteration R}
\|\mathring{R}_{q+1}(t)\|_{L^1}\leq\begin{cases}
M_L\delta_{q+2},& t\in (\sigma_q\wedge T_L,T_L],\\
M_L\delta_{q+2}+\sup_{s\in[t-\ell,t]}\|\mathring{R}_{q}(s)\|_{L^1},&t\in (\frac{\sigma_q}2\wedge T_L,\sigma_q\wedge T_L],\\
\sup_{s\in[t-\ell,t]}\|\mathring{R}_{q}(s)\|_{L^1}+A&t\in [0,\frac{\sigma_q}2\wedge T_L].
\end{cases}
\end{equation}
Consequently, $(v_{q+1},\mathring{R}_{q+1})$ obeys \eqref{inductionv ps},  \eqref{inductionv C1}, \eqref{eq:R} and \eqref{bd:R} at the level $q+1$.
Furthermore, for $1<p=\frac{31}{30}<\frac{16}{15}$, $t\in [0,T_L]$ it holds
\begin{align}\label{induction w}
\|v_{q+1}(t)-v_q(t)\|_{W^{\frac12,p}}\leq
M_0M_L^{1/2}\delta_{q+1}^{1/2}
\end{align}
and for $t\in (4\sigma_q\wedge T_L,T_L]$ we have
\begin{align}\label{p:gamma}
\big|\|v_{q+1}\|_{L^2}^2-\|v_q\|_{L^2}^2-3\gamma_{q+1}\big|\leq 7M_L\delta_{q + 1}.
\end{align}

\end{proposition}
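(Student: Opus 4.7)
The plan is to adapt the convex integration scheme of Proposition~\ref{main iteration} to this modified setting, with three main modifications: an initial condition $z^{in}(t)=e^{t\Delta}u_{0}$ absorbed into the linear part, a time cutoff near $t=0$ that forces $v_{q+1}\equiv 0$ on $[0,\tfrac{\sigma_q}{2}\wedge T_L]$, and a deterministic pumping quantity $\gamma_{q+1}$ replacing the prescribed energy $e(t)$ of Section~\ref{s:1.1}. First, I would reselect the parameters $a,b,\beta,\alpha$ so that all previous constraints of Section~\ref{s:c} still hold, with $\bar e$ and $\tilde e$ replaced by the deterministic quantity $M_L$ and with extra room to absorb $L$ and $N$ (via $M_L$). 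I would then mollify $v_q$ and $\mathring R_q$ in space and in time by a one-sided mollifier at scale $\ell=\lambda_{q+1}^{-3\alpha/2}\lambda_q^{-2}$; adaptedness is preserved as in Section~\ref{s:p}, and since $v_q\equiv 0$ on $[0,\tfrac{\sigma_{q-1}}{2}\wedge T_L]$ by hypothesis \eqref{inductionv ps}, the mollified field $v_\ell$ also vanishes on $[0,(\tfrac{\sigma_{q-1}}{2}-\ell)\wedge T_L]$, provided $\ell\ll\sigma_q$, which is guaranteed by taking $a$ large.

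Next, I would build the amplitude functions exactly as in \eqref{amplitudes} but with the energy-pumping function
\[
\gamma_q(t):=\tfrac{\gamma_{q+1}}{(2\pi)^3},
\]
in place of the previous $\gamma_q$ based on $e(t)$. The key algebraic identity \eqref{cancellation} then reads $(2\pi)^{-3/2}\sum_\xi a_{(\xi)}^2\int_{\mathbb T^3}W_{(\xi)}\otimes W_{(\xi)}\,\dif x = \rho\,\mathrm{Id}-\mathring R_\ell$ with $\rho=2\sqrt{\ell^2+|\mathring R_\ell|^2}+\gamma_\ell$, and all the pointwise bounds on $\rho$ and $a_{(\xi)}$ are inherited as in \eqref{rhoN}--\eqref{estimate aN}, with $\bar e$ replaced by $M_L+qA$ (using \eqref{bd:R}) when estimating $\rho^{1/2}$ pointwise and by $M_L\delta_{q+1}$ when only the $L^1$ smallness of $\mathring R_q$ on $(\sigma_{q-1}\wedge T_L,T_L]$ is used. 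The intermittent jets $W_{(\xi)}$, $W^{(c)}_{(\xi)}$, $V_{(\xi)}$ and the correctors $w^{(p)}$, $w^{(c)}$, $w^{(t)}$ are defined exactly as in Section~\ref{s:con}. To enforce the vanishing of $v_{q+1}$ near $t=0$, I would introduce a smooth $(\mathcal F_t)$-progressive cutoff $\chi_q(t)$ with $\chi_q=0$ on $[0,2\sigma_q]$, $\chi_q=1$ on $[3\sigma_q,T_L]$, $|\chi_q'|\lesssim \sigma_q^{-1}$, and set
\[
w_{q+1}:=\chi_q\bigl(w^{(p)}_{q+1}+w^{(c)}_{q+1}\bigr)+\chi_q^2\,w^{(t)}_{q+1},\qquad v_{q+1}:=v_\ell+w_{q+1}.
\]
The squaring of $\chi_q$ in front of $w^{(t)}$ is crucial so that the oscillation error, which hinges on the identity \eqref{equation for temporal}, closes: a derivative falling on $\chi_q^2$ cancels with $2\chi_q\chi_q'$ times the principal--principal interaction.

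With these objects in hand, the verification of the inductive bounds \eqref{inductionv ps}--\eqref{bd:R} at level $q+1$ and the new bounds \eqref{iteration ps}, \eqref{iteration R}, \eqref{induction w}, \eqref{p:gamma} splits into three time regions. On $(4\sigma_q\wedge T_L,T_L]$ we have $\chi_q\equiv 1$ and the analysis is essentially that of Section~\ref{sss:v}--\ref{sss:R}, except that the energy-pumping term $3\gamma_{q+1}$ now enters the estimate of $\|w_{q+1}^{(p)}\|_{L^2}^2$, yielding \eqref{p:gamma}; the improved bound \eqref{iteration R} for $\mathring R_{q+1}$ follows exactly as in Section~\ref{sss:R} with $\bar e$ replaced by $M_L$ since only the small Reynolds bound \eqref{eq:R} is used there. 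On $[0,\tfrac{\sigma_q}{2}\wedge T_L]$ we have $v_\ell\equiv 0$ and $\chi_q\equiv 0$, so $v_{q+1}\equiv 0$ and $\mathring R_{q+1}$ equals the mollified $\mathring R_\ell$ plus commutator terms of size at most $A$, giving \eqref{iteration R}. On the transitional interval $(\tfrac{\sigma_q}{2}\wedge T_L, 4\sigma_q\wedge T_L]$, the $L^2$ bound \eqref{iteration ps} follows by the pointwise bound $\|\rho\|_{C_tL^1}\lesssim \gamma_{q+1}+\ell+\|\mathring R_q\|_{C_tL^1}\lesssim \gamma_{q+1}+M_L+qA$ from \eqref{bd:R}; here we lose $\delta$'s but gain $A$'s. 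For \eqref{induction w}, I would use Lemma~\ref{lem:Lp} (in the $p=31/30$ version obtained by interpolation between $L^1$ and $L^2$ estimates for $w_{q+1}^{(p)}$ and standard bounds for $w_{q+1}^{(c)},w_{q+1}^{(t)}$): the exponent $p=31/30$ is tuned so that $r_\perp^{2/p-1}r_\|^{1/p-1/2}\lambda_{q+1}^{1/2}$ remains summable against $\delta_{q+1}^{1/2}$ up to the required $W^{1/2,p}$ derivative.

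The main obstacle I anticipate is the transition interval $(\tfrac{\sigma_q}{2}\wedge T_L,4\sigma_q\wedge T_L]$, where $\chi_q'\ne 0$ and the Reynolds stress must absorb the new error $\chi_q'(w^{(p)}_{q+1}+w^{(c)}_{q+1})+2\chi_q\chi_q'\,w^{(t)}_{q+1}$ via the inverse divergence $\mathcal R$. Because $\sigma_q^{-1}\simeq 2^q$ is only $q$-dependent (not $\lambda_{q+1}$-dependent), this contribution is small in $L^1$ as long as $a$ is chosen large enough, and because on this interval the inductive bound on $\mathring R_{q+1}$ allows for an extra additive $\sup_{s\in[t-\ell,t]}\|\mathring R_q(s)\|_{L^1}+A$, the scheme closes. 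A secondary subtlety is that the initial condition contribution $z^{in}=e^{t\Delta}u_{0}$ satisfies only the deterministic bound $\|z^{in}\|_{L^2}\leq N$ and is not small in higher norms, so all estimates that previously used smallness of $z$ in $C_tL^\infty$ or $C_tH^{1-\delta}$ must be replaced by the coarser $M_L^{1/2}$ bound; this is precisely why the constant $M_L(N)\geq (L+N)^2$ enters the scheme and why the growing factor $qA$ appears in the Reynolds-stress bound \eqref{bd:R}. Once these adjustments are in place, iterating the $L^1$ estimate \eqref{iteration R} and the $L^2$ estimates \eqref{iteration ps} exactly as in the proof of Theorem~\ref{Main results1} establishes the proposition.
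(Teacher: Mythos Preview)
Your overall plan matches the paper's approach: mollify, replace $\gamma_\ell$ by the constant $\gamma_{q+1}/(2\pi)^3$ in $\rho$, multiply $w^{(p)}_{q+1},w^{(c)}_{q+1}$ by a time cutoff $\chi$ and $w^{(t)}_{q+1}$ by $\chi^2$, and split the analysis into three time regimes. However, two concrete choices in your plan would make the argument fail as written.

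\textbf{Cutoff placement.} You set $\chi_q=0$ on $[0,2\sigma_q]$ and $\chi_q=1$ on $[3\sigma_q,T_L]$. But \eqref{iteration R} requires $\|\mathring R_{q+1}(t)\|_{L^1}\le M_L\delta_{q+2}$ already on $(\sigma_q\wedge T_L,T_L]$. On $(\sigma_q,2\sigma_q]$ your cutoff vanishes, so no cancellation with $\mathring R_\ell$ takes place and $\mathring R_{q+1}$ contains $\mathring R_\ell$ essentially unchanged; since \eqref{eq:R} at level $q$ holds only on $(\sigma_{q-1},T_L]=(2\sigma_q,T_L]$, here $\|\mathring R_\ell\|_{L^1}$ is only bounded by $M_L+qA$, not by $M_L\delta_{q+2}$. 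The paper places the cutoff on $[\sigma_q/2,\sigma_q]$ precisely so that $\chi\equiv1$ on all of $(\sigma_q,T_L]$; with that choice the Reynolds estimate in regime~I closes using only the coarse amplitude bounds $(M_L+qA+K)\le\ell^{-1}$, independently of \eqref{eq:R}.

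\textbf{Regularity of $z^{in}$.} You propose to replace every $L^\infty$ or $H^{1-\delta}$ estimate on $z$ by the crude $L^2$ bound $\|z_q\|_{L^2}\le M_L^{1/2}$. This suffices in regime~III, where the target is $O(M_L)$. But in regime~I the target for $\mathring R_{q+1}$ is $M_L\delta_{q+2}$, and the linear error $z_\ell\mathring\otimes w_{q+1}$ as well as the commutators $R_{\mathrm{com}},R_{\mathrm{com}1}$ cannot be made that small using only an $L^2$ bound on $z^{in}$. The paper exploits the smoothing of the heat semigroup: for $t>0$,
\[
\|z^{in}(t)\|_{L^\infty}\lesssim (1+t^{-3/4})\|u_0\|_{L^2},\qquad \|z^{in}(t)\|_{H^{1/2}}\lesssim (1+t^{-1/4})\|u_0\|_{L^2},
\]
together with a $C^{1/2}_t L^2$ bound of the same type. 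In regime~I one has $t>\sigma_q$, and the parameter choice $\sigma_q\ge\ell^{1/2}$ converts these time-singular factors into powers of $\ell^{-1}$ that are then absorbed as in Section~\ref{sss:R}. Without this heat-kernel gain, the $z^{in}$-contributions to $R_{\mathrm{lin}},R_{\mathrm{com}},R_{\mathrm{com}1}$ stay of size $O(M_L)$ and \eqref{iteration R} fails on $(\sigma_q,T_L]$.

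A smaller point: for \eqref{induction w} Lemma~\ref{lem:Lp} is stated only for $p\in\{1,2\}$ and gives $L^p$ bounds, not fractional Sobolev bounds. The paper instead bounds $\|w_{q+1}\|_{W^{1,p}}$ directly from the pointwise amplitude estimates and the building-block bounds \eqref{bounds}, then combines with \eqref{error ps1}; the exponent $p=31/30$ is chosen so that $r_\perp^{2/p-2}r_\|^{1/p-1}\lambda_{q+1}$ carries a net negative power of $\lambda_{q+1}$.
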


The proof of this result is presented in Section~\ref{s:it} below.

\begin{remark}
Note that based on \eqref{iteration ps} together with the fact that $v_{0}=0$ on $[0,T_{L}]$,  the first bound in \eqref{inductionv ps} on the level $q+1$, $q\in\mathbb{N}$, may be improved by replacing $qA$ on the right hand side by $(q-1)A$. Since this is never needed in the proof of Proposition~\ref{p:iteration} and is not necessarily valid in case $q-1=-1$, i.e. for $q=0$, we postulated \eqref{inductionv ps} in the present form.
\end{remark}

\begin{remark}\label{r:5.3}
Comparing to Theorem C in \cite{BMS20}, we only obtain solutions  in $L^{p}(0,T_{L};L^{2})$ for every $p\in[1,\infty)$ but not for $p=\infty$. This is a consequence of the term $qA$ present in \eqref{bd:R} which propagates to all the other estimates and comes from the control of the $z$-part, cf. the case {\bf III.} in Section~\ref{s:R} below. In our approach, the initial condition is included into $z$ whereas \cite{BMS20} include it into $v_{0}$. Consequently, the $C^{1}$-norm of their iterations $v_{q}$ is generally infinite, i.e. \eqref{inductionv C1} fails, and hence they need to control the smallness of $R_{q}-R_{\ell}$ directly. This is possible in the deterministic setting  using uniform continuity but in the stochastic setting one would require a compact range uniformly in $\omega$ in the spirit of \cite{BFH20,HZZ20}. This is the reason why we proceeded differently at this point.
\end{remark}

We intend to start the iteration from $v_{0}\equiv 0$ on $[0,T_{L}]$. In that case, we have
$\mathring{R}_0=z_{0}\mathring\otimes z_{0}$
so that
\begin{align*}
\|\mathring{R}_0(t)\|_{L^1}\leq M_L
\end{align*}
and \eqref{eq:R} as well as \eqref{bd:R} are satisfied on the level  $q=0$, since $\delta_{1}=1$.

We deduce the following result.

\bt\label{thm:6.1}
There exists a $\mathbf{P}$-a.s. strictly positive stopping time $T_L$, arbitrarily large by choosing $L$ large, such that for any   initial condition $u_0\in L^2_\sigma$ $\mathbf{P}$-a.s. independent of the Brownian motion $B$
the following holds true: There exists an $(\mathcal{F}_t)_{t\geq0}$-adapted process $u$ which belongs to $L^p(0,T_L;L^2)\cap C([0,T_L],W^{\frac12,\frac{31}{30}})$ $\mathbf{P}$-a.s. for all $p\in[1,\infty)$ and is an analytically weak solution to \eqref{1} with $u(0)=u_0$.  There are infinitely many such solutions $u$.
\et

\begin{proof}
Let the additional assumption  \eqref{eq:u0} be satisfied  for some $N\geq 1$. Letting $v_{0}\equiv 0$, we repeatedly apply Proposition~\ref{p:iteration} and obtain $(\mathcal{F}_{t})_{t\geq0}$-adapted processes $(v_{q},\mathring{R}_{q})$, $q\in\N$, such that  $v_q \to v$ in $C([0,T_L];W^{\frac12,\frac{31}{30}})$ as a consequence of  \eqref{induction w}. Moreover, using {\eqref{iteration ps}} we
have for every $p\in[1,\infty)$
$$
   \int_0^{T_L}  \| v_{q + 1} - v_q \|_{L^2}^p d t
 \leqslant \int_{\sigma_q / 2 \wedge T_L}^{4 \sigma_q \wedge
   T_L} \| v_{q + 1} - v_q \|_{L^2}^p d t +
   \int_{4 \sigma_q \wedge T_L}^{T_L} \| v_{q + 1} - v_q \|_{L^2}^p d t
   $$
$$ \lesssim \int_{\sigma_q / 2 \wedge T_L}^{4 \sigma_q \wedge T_L}
   M^p_0 ((M_L + q A)^{1/2} + \gamma^{1/2}_{q + 1})^{p} d t
    + \int_{4 \sigma_q \wedge T_L}^{T_L} M_0^p (M_L^{1/2} \delta^{1/2}_{q + 1}
   + \gamma^{1/2}_{q + 1})^{p} d t
   $$
$$ \lesssim M_{0}^{p}\left( 2^{- q} ((M_L + q A)^{1/2}  + \gamma_{q+1}^{1/2})^{p}
   + T_{L}(M_L^{1 / 2}\delta_{q+1}^{1/2} + \gamma_{q+1}^{1/2})^{p} \right).
   $$
Thus, the sequence $v_{q}$, $q\in\N$, is  Cauchy hence converging in $L^{p}(0,T_{L};L^{2})$ for all $p\in[1,\infty)$.
Accordingly, $v_q\to v$ also in $L^p(0,T_{L};L^{2})$.
Furthermore, by \eqref{eq:R}, \eqref{bd:R} we know for all $p\in[1,\infty)$
	$$
	\int_0^{T_L}\|\mathring{R}_q(t)\|^{p}_{L^1}\dif t\lesssim M_L^{p}\delta^{p}_{q+1}T_L+(M_L+qA)^{p}2^{-q} \to 0, \quad\mbox{as}\quad q\to\infty.
	$$
Thus, the process $u=v+z$ satisfies \eqref{1} before $T_L$ in the analytic weak sense. Since $v_q(0)=0$ we deduce $v(0)=0$, which implies that $u(0)=u_0$.
	
Next, we prove non-uniqueness of  the constructed solutions, still  under the additional assumption \eqref{eq:u0}.	In view of  \eqref{p:gamma}, we have on $t\in (4\sigma_{0}\wedge T_L,T_L]$
\begin{align}\label{eq:K}
\begin{aligned}
\big|\|v\|_{L^2}^2-3K\big|&\leq \left|\sum_{q=0}^\infty(\|v_{q+1}\|_{L^2}^2-\|v_{q}\|_{L^2}^2-3\gamma_{q+1})\right|+3\sum_{q\neq2}\gamma_{q+1}
\\
&\leq 7M_L\sum_{q=0}^{\infty}\delta_{q+1}+3\sum_{q\neq 2}\gamma_{q+1}=:c
\end{aligned}
\end{align}
and this implies non-uniqueness by choosing different $K$. More precisely, for a given  $L\geq 1$ sufficiently large it holds $\mathbf{P}(4\sigma_{0}<T_{L})>0$. The parameters $L,N$  determine $M_{L}(N)$ and consequently by choosing different $K=K(L,N)$ and $K'=K'(L,N)$ so that $3|K-K'|>2c$ we deduce that the corresponding solutions $v_{K}$ and $v_{K'}$ have different $L^{2}$-norms on the set
$\{4\sigma_0<T_L\}.$ The  velocities $u_{K}=v_{K}+z$ and $u_{K'}=v_{K'}+z$ are therefore different as well.
	
For a general initial condition $u_{0}\in L^{2}_{\sigma}$ $\mathbf{P}$-a.s., define $\Omega_{N}:=\{N-1\leq \|u_{0}\|_{L^{2}}< N\}\in\mathcal{F}_{0}$. Then the first part of this proof gives the existence of infinitely many adapted solutions $u^{N}$ on each $\Omega_{N}$. Letting $u:=\sum_{N\in\mathbb{N}}u^{N}1_{\Omega_{N}}$ concludes the proof.
	\end{proof}

Now we have all in hand to complete the proofs of Theorem \ref{th:ma4} and Corollary~\ref{cor:law}.

\begin{proof}[Proof of Theorem \ref{th:ma4}]
By Theorem~\ref{thm:6.1} we constructed a probabilistically strong solution $u$ before the stopping time $T_L$ starting from the given initial condition $u_0\in L^{2}_{\sigma}$ $\mathbf{P}$-a.s. Since $T_L>0$ $\mathbf{P}$-a.s., we know for $\mathbf{P}$-a.e. $\omega$ that there exists $q_0(\omega)$ such that $4\sigma_{q_0(\omega)}<T_L(\omega)$.
By \eqref{iteration ps} we find
	\begin{align*}
	\|u(T_L)\|_{L^2}&\leq \|z(T_L)\|_{L^2}+\sum_{0\leq q< q_0}\|v_{q+1}(T_L)-v_q(T_L)\|_{L^2}+\sum_{ q\geq q_0}\|v_{q+1}(T_L)-v_q(T_L)\|_{L^2}
	\\
	&\lesssim M_L^{{1/2}}+M_0q_0(M_L+q_0A)^{1/2}+M_0(K^{1/2}+1)+M_0M_L^{1/2}<\infty.
	\end{align*}
	This implies that $\|u(T_L)\|_{L^2}<\infty$ $\mathbf{P}$-a.s. hence we can use the value $u(T_{L})$ as a new initial condition in Theorem~\ref{thm:6.1}. More precisely, now we aim at solving the original Navier--Stokes system \eqref{1} on the time interval $[T_{L},T_{L}+\hat{T}_{L+1}]$ by applying Theorem~\ref{thm:6.1} with some  stopping time $\hat{T}_{L+1}>0$ $\mathbf{P}$-a.s.
	
	To this end, we define $u_1(0)=u(T_L), \hat{B}_t=B_{T_L+t}-B_{T_L}$ and  $\hat{\mathcal{F}}_t=\sigma(\hat{B}_s,s\leq t)\vee \sigma(u(T_L))$. Let  $\hat{Z}$ solve equation \eqref{linear} with $\hat{Z}(0)=0$ and $B$ replaced by $\hat{B}$. It is straightforward to check that $Z(t+T_L)-e^{t\Delta}Z(T_L)$ satisfies the same equation as $\hat{Z}$ with the same initial data. As a result, $\hat{Z}(t)=Z(t+T_L)-e^{t\Delta}Z(T_L)$, which we use for the definition of the stopping time
	\begin{equation}\label{stopping time ps1}
\aligned
\hat T_{L+1}&:=\inf\{t\geq 0, \|\hat{Z}(t)\|_{{H^{1-\delta}}}\geq 2(L+1)/ C_S\}\\
&\qquad\wedge \inf\{t\geq0,\|\hat{Z}\|_{C_t^{1/2-2\delta}{L^2}}\geq 2(L+1)/C_S\}
\wedge  (L+1).
\endaligned
\end{equation}
Then we find $T_{L+1}-T_L\leq \hat{T}_{L+1}$ since for $t\leq T_{L+1}-T_L$ it holds
	$$\|\hat{Z}(t)\|_{{H^{1-\delta}}}\leq \|Z(t+T_L)\|_{{H^{1-\delta}}}+\|Z(T_L)\|_{{H^{1-\delta}}}\leq (2L+1)/C_S< 2(L+1)/C_S$$
	and
		\begin{align*}
		\|\hat{Z}\|_{C_t^{1/2-2\delta}{L^2}}&\leq \|Z(\cdot+T_L)\|_{C_t^{1/2-2\delta}{L^2}}+\|e^{\Delta\cdot}Z(T_L)\|_{C_t^{1/2-2\delta}{L^2}}
		\\&\leq (L+1)/C_{S}+\|Z(T_L)\|_{{H^{1-\delta}}}\leq  (2L+1)/C_S< 2(L+1)/C_S.
		\end{align*}
Similarly to the proof of Theorem~\ref{thm:6.1} we define $\Omega_N:=\{N-1\leq \|u(T_L)\|_{L^2}< N\}\in \hat{\mathcal{F}}_{0}$ and   construct a solution $u_1^N$ on $\Omega_N$ which solves the system with $B$ replaced by $\hat B$ on $[0,\hat{T}_{L+1}]$.
Finally, we deduce that $\bar{u}_1=\sum_{N\in\N} u_1^N1_{\Omega_N}$ is a  solution  to  \eqref{1}  on $[0,\hat{T}_{L+1}]$ with $B$ replaced by $\hat{B}$, which is still adapted to  $(\hat{\mathcal{F}}_t)_{t\geq0}$.

	Thus, define $u_{1}(t)=u(t)1_{t\leq T_L}+ \bar{u}_1(t-T_L)1_{t>T_L}$. For $t\leq T_L$, it is easy to see $u_1$ satisfies the equation \eqref{1} before $T_L$. For $T_L<t\leq T_{L+1} \leq T_{L}+\hat T_{L+1}$ we know
	\begin{align*}
	\bar{u}_1(t-T_L)&=u(T_L)+\int_0^{t-T_L}(\Delta \bar{u}_1-\mathbb{P}\div(\bar{u}_1\otimes \bar{u}_1))(s)\,\dif s+B(t)-B({T_L})
	\\&=u_0+\int_0^{T_L}(\Delta u-\mathbb{P}\div({u}\otimes {u}))(s)\,\dif s+\int_{T_L}^t(\Delta u_1-\mathbb{P}\div(u_1\otimes u_1))(s)\,\dif s+B(t)
	\\&=u_0+\int_{0}^t(\Delta u_1-\mathbb{P}\div(u_1\otimes u_1))(s)\,\dif s+B(t).
	\end{align*}
As a consequence,  $u_{1}$ satisfies the stochastic Navier--Stokes equation before $T_{L+1}$.

	 Now, we prove that $u_{1}$ is adapted to the natural filtration $(\mathcal{F}_t)_{t\geq0}$ generated by the Brownian motion $B$ together with the initial condition $u_{0}$. It is easy to see that $\hat{\mathcal{F}}_t\subset \mathcal{F}_{T_L+t}$. This implies that $\bar{u}_1(t-T_L)1_{t>T_L}$ is measurable with respect to $\mathcal{F}_{t}$. Indeed, we define
	$T_L^n:=\sum_{k=1}^\infty\frac{k}{2^n}1_{\{\frac{k-1}{2^n}\leq T_L<\frac{k}{2^n}\}}$ and we know that $T_L^n$ is stopping time and $T_L^n\downarrow T_L$.
	For every $t\geq 0$ $\bar{u}_1(t)\in \mathcal{F}_{T_L+t}\subset \mathcal{F}_{T_L^n+t}$. This implies that for every $t\geq0$ and every closed domain $D$ in $W^{\frac12,\frac{31}{30}}$
	$$\{\bar{u}_1(t)\in D\}\cap \{T_L^n+t\leq s\}\in \mathcal{F}_s.$$
	Then we find
	\begin{align*}
	&\{t>T_L^n\}\cap \{\bar{u}_1(t-T_L^n)\in D\}\\
&	\quad=\{t>T_L^n\}\cap \left(\cup_{\frac{k}{2^n}<t}\left(\left\{T_L^n=\frac{k}{2^n}\right\}\cap \left\{\bar{u}_1\left(t-\frac{k}{2^n}\right)\in D\right\}\cap \left\{T_L^n+t-\frac{k}{2^n}\leq t\right\}\right)\right)\in \mathcal{F}_t,
	\end{align*}
	which yields that $\bar{u}_1(t-T_L^n)1_{t>T_L^n}$ is measurable with respect to $\mathcal{F}_{t}$. Since $\bar{u}_1(t-T_L)1_{t>T_L}=\lim_{n\to\infty}\bar{u}_1(t-T_L^n)1_{t>T_L^n}$, we know $\bar{u}_1(t-T_L)1_{t>T_L}$ is measurable with respect to $\mathcal{F}_{t}$.
	
Now, we can iterate the above steps, i.e. starting from $u_k(T_{L+k})$ and constructing solutions $u_{k+1}$ before the stopping time $T_{L+k+1}$.
 Define $\bar u=u1_{t\leq T_L}+\sum_{k=1}^\infty u_k1_{\{T_{L+k-1}< t\leq T_{L+k} \}}$
	 and obtain $\bar u\in L^p_{\rm{loc}}([0,\infty); L^2)\cap C([0,\infty);W^{\frac12,\frac{31}{30}}) $, for all $p\in[1,\infty)$, is a probabilistically strong solution. We emphasize that $\bar u$ does not blow up at any finite time $T$ since for
	 any time $T$ we could find $k_0$ such $T<T_{L+k_0}$ and the infinite sum becomes a finite sum.
	Furthermore, as in the proof of Theorem~\ref{thm:6.1} we obtain infinitely many probabilistically strong solutions by choosing different $K$.
		\end{proof}

\begin{proof}[Proof of Corollary~\ref{cor:law}]
Let $L\geq 1$ be such that $\mathbf{P}(4\sigma_{0}<T_{L})>1/2$. With the notation from the proof of Theorem~\ref{thm:6.1} and particularly in view of \eqref{eq:K}, we choose again $K,$ $K'$ so that $3|K-K'|>2c$. The corresponding solutions then satisfy
$$
\mathbf{P}\left(\|v_{K}(T_L)\|_{L^{2}}^{2}\in [3K-c,3K+c]\right) >1/2,\qquad \mathbf{P}\left(\|v_{K'}(T_L)\|_{L^{2}}^{2}\in [3K'-c,3K'+c]\right) >1/2.
$$
Since the two intervals $[3K-c,3K+c]$ and $[3K'-c,3K'+c]$ are disjoint, the laws of $v_{K}$ and $v_{K'}$ are different. This carries over to the solutions $u_{K}$ and $u_{K'}$ as well as to the final solutions obtained at the end of the proof of Theorem~\ref{thm:6.1}.
\end{proof}

\subsection{Proof of Proposition~\ref{p:iteration}}
\label{s:it}

\subsubsection{Choice of parameters}
\label{s:par}

We take again the first two assumptions from \eqref{ell} as well as \eqref{ell1}.
For  the choice of $\ell$ we require
$\ell\leq \ell^{1/2}\vee (4\ell)\leq \sigma_q$, which is satisfied if  $a>2$.  In addition, we postulate $M_L+K+qA\leq \ell^{-1}$, which can be deduced by choosing $a$ large enough. We also need
$\ell^{1/2}\lambda_q^4<\lambda_{q+1}^{-\beta}$ and $\ell^{1/4}(M_L+qA+K)<\lambda_{q+1}^{-2b\beta}$, which requires $b(3\alpha-4\beta)>12$ and $3\alpha>16\beta b$. We further require {$M_0M_L^{1/2}(M_L+qA+K)<
\lambda_{q+1}^{\frac\alpha8(1-\delta)-2\beta b}$, which needs $\alpha>18\beta b$}. For given $\alpha>0$ satisfying $161\alpha<1/7$ we can choose $b\in 7\cdot 8\mN$ large enough such that $\alpha b\in 8\mN$. Consequently, \eqref{ell} is satisfied and assuming further that $\beta$ is small enough so that $4\beta<\alpha$, the condition $b(3\alpha-4\beta)>12$ holds.  Finally, we  make $\beta$ possibly smaller to fulfill {$\alpha>18\beta b$}. Then all the conditions are satisfied by choosing $a$ large enough.  We point out that by choosing different $M_{L},K$ we get different $a$.
However, the  initial value of the solution does not change.
Moreover,   the space $W^{\frac12,\frac{31}{30}}$, where  the convergence $v_{q}\to v$ in the proof of Theorem~\ref{thm:6.1} holds true, depends only on the structure of the parameters $r_\perp, r_{\|}$, i.e. the corresponding powers of $\lambda_{q+1}$, but remains  unchanged for different values of $a$, $b$, $\beta$.

\subsubsection{Construction of $v_{q+1}$}

Now, we extend $v_q, z_q, \mathring{R}_q$ and $p_q$ to $t<0$ by taking them equal to the value at $t=0$. Then $(v_q,z_q,\mathring{R}_q)$ also satisfies equation \eqref{induction ps} for $t<0$ as $\p_tv_q(0)=0$.
By  (\ref{inductionv C1}) and a similar argument as in  \eqref{error}  we know for $t\in[0, T_L]$
\begin{equation}\label{error ps}
\|v_q-v_\ell\|_{C_tL^2}\leq \ell \lambda_q^4M_L^{1/2}\leq \frac{1}{4} \delta_{q+1}^{1/2}M_L^{1/2},
\end{equation}
\begin{equation}\label{error ps1}
\|v_q-v_\ell\|_{C_tC^{\frac12}}\lesssim \ell^{1/2}\|v_q\|_{C_{t,x}^1}\lesssim \ell^{1/2}\lambda_q^4M_L^{1/2}\leq \frac{1}{4} \delta_{q+1}^{1/2}M_L^{1/2},
\end{equation}
where we  used the fact that $\ell^{\frac12}\lambda_q^4<\lambda_{q+1}^{-\beta}$ and we  chose $a$ large enough in order to absorb the implicit constant.
In addition, by \eqref{inductionv ps} it holds for $t\in(\frac{\sigma_q}2\wedge T_L, T_L]$
\begin{equation}\label{eq:vl ps}
\|v_\ell(t)\|_{L^2}\leq \|v_q\|_{C_{t}L^2}\lesssim  M_0(M_L^{1/2}+K^{1/2})+3M_0(M_L+qA)^{1/2},
\end{equation}
with some universal implicit constant.
For $N\geq1$, $t\in [0,T_L]$ we have by \eqref{ell}
\begin{equation}\label{eq:vl2 ps}
\|v_\ell\|_{C^N_{t,x}}\lesssim \ell^{-N+1}\|v_q\|_{C^1_{t,x}}\leq \ell^{-N+1}\lambda_q^4M_L^{1/2} \leq  \ell^{-N}\lambda_{q+1}^{-\alpha}M_L^{1/2},
\end{equation}
where we have chosen $a$ large enough to absorb the implicit constant.

Now we change the definition of $\rho$ with $\gamma_\ell$ replaced by the constant $\gamma_{q+1}$, namely,
$$
\rho:=2\sqrt{\ell^2+|\mathring{R}_\ell|^2}+\frac{\gamma_{q+1}}{(2\pi)^3},
$$
which changes \eqref{rho} into
\begin{equation}\label{rho ps}
\|\rho(t)\|_{L^p}\leq 2\ell (2\pi)^{3/p}+2\|\mathring{R}_\ell(t)\|_{L^p}+\gamma_{q+1}.
\end{equation}
In view of \eqref{eq:R} which holds on $(2\sigma_q\wedge T_L,T_L]$ and since $\supp\varphi_\ell\subset [0,\ell]$, we obtain for $t\in (4\sigma_q\wedge T_L,T_L]$
\begin{equation}\label{rho0 sp}
 \|\rho\|_{C^{0}_{[4\sigma_q\wedge T_L,t],x}}\lesssim \ell^{-4}\delta_{q+1}M_L+\gamma_{q+1},
\end{equation}
where we also used the embedding $W^{4,1}\subset L^\infty$.
Then, we deduce similarly as \eqref{rhoN} for $N\geq1$ and $t\in (4\sigma_q\wedge T_L,T_L]$
\begin{equation}\label{rhoN sp}
\aligned
\|\rho\|_{C^N_{[4\sigma_q\wedge T_L,t],x}}& \lesssim \ell^{-4-N}M_L\delta_{q+1}+\ell^{-N+1}(\ell^{-5}M_L\delta_{q+1})^N+\gamma_{q+1}\lesssim
\ell^{2 - 7 N} \delta_{q + 1} M_L+\gamma_{q+1}.
\endaligned
\end{equation}
For a general $t\in [0,T_L]$, we have by \eqref{bd:R}
\begin{equation}\label{rho0 sp1}
\|\rho\|_{C^{0}_{t,x}}\lesssim \ell^{-4}(M_L+qA)+\gamma_{q+1},
\end{equation}
 and for $N\geq 1$
\begin{equation}\label{rhoN sp1}
\aligned
\|\rho\|_{C^N_{t,x}}& \lesssim
\ell^{2 - 7 N}  (M_L+qA)+\gamma_{q+1},
\endaligned
\end{equation}
where we used $M_L+qA\leq \ell^{-1}$.

Next, we define the amplitude functions as in \eqref{amplitudes} with $\rho$ replaced by the above.
By using (\ref{rho ps}) for $t\in(4\sigma_q\wedge T_L, T_L]$
\begin{equation}\label{estimate a ps}
\begin{aligned}
\|a_{(\xi)}(t)\|_{L^2}&\leq \|\rho(t)\|_{L^1}^{1/2}\|\gamma_\xi\|_{C^0(B_{1/2}(\Id))}\\
&\leq\frac{M}{8|\Lambda|(1+8\pi^{3})^{1/2}}\left(2(2\pi)^3\ell+2\delta_{q+1}M_L+\gamma_{q+1}\right)^{1/2}\\
&\leq\frac{M}{4|\Lambda|}(M_L^{1/2}\delta_{q+1}^{1/2}+\gamma_{q+1}^{1/2}),
\end{aligned}
\end{equation}
where   $M$ denotes the universal constant from Lemma~\ref{geometric}.
From \eqref{rho0 sp}, \eqref{rhoN sp} and similarly to \eqref{eq:rho12} we deduce for $t\in (4\sigma_q\wedge T_L,T_L]$
\begin{align*}
\|\rho^{1/2}\|_{C^0_{[4\sigma_q\wedge T_L,t],x}}\lesssim \ell^{-2}\delta_{q+1}^{1/2}M_L^{1/2}+\gamma_{q+1}^{1/2},
\end{align*}
and for $m=1,\dots, N$ using $K\leq \ell^{-1}$
\begin{align*}
\|\rho^{1/2}\|_{C^m_{[4\sigma_q\wedge T_L,t],x}}\lesssim \ell^{1-7m}\delta_{q+1}^{1/2}M_L^{1/2}+\ell^{1/2-m}\gamma_{q+1}^{m}\leq \ell^{1-7m}(\delta_{q+1}^{1/2}M_L^{1/2}+\gamma_{q+1}^{1/2}).
\end{align*}
This implies  for $N\in \mN_{0}$ as in \eqref{estimate aN}
\begin{equation}\label{estimate aN ps}
\begin{aligned}
\|a_{(\xi)}\|_{C^N_{[4\sigma_q\wedge T_L,t],x}}
&\lesssim \ell^{-8-7N}(\delta_{q+1}^{1/2}M_L^{1/2}+\gamma_{q+1}^{1/2}).
\end{aligned}
\end{equation}
For a general $t\in [0,T_L]$ we have for $N\in \mN_{0}$
\begin{equation}\label{estimate aN ps1}
\begin{aligned}
\|a_{(\xi)}\|_{C^N_{t,x}}
&\lesssim \ell^{-8-7N}((M_L+qA)^{1/2}+\gamma_{q+1}^{1/2}),
\end{aligned}
\end{equation}
where we used $M_L+qA\leq \ell^{-1}$ and $K\leq \ell^{-1}$.

 Let us introduce a smooth cut-off function
\begin{align*}
\chi(t)=\begin{cases}
0,& t\leq \frac{\sigma_q}2,\\
\in (0,1),& t\in (\frac{\sigma_q}2,{\sigma_q} ),\\
1,&t\geq {\sigma_q}.
\end{cases}
\end{align*}
Note that
$\|\chi'\|_{C^{0}_{t}}\leq 2^{q+1}$ which has to be taken into account in the estimates of the $C^{1}_{t,x}$-norms in \eqref{principle est2 ps}, \eqref{correction est2 ps}, \eqref{temporal est2 ps} below.

Let  $ w^{(p)}_{q+1},$ $  w_{q+1}^{(c)},$  $w_{q+1}^{(t)}$ be given as in Section \ref{s:con} with $a_{(\xi)}$ replaced by the above.
We define the perturbations $\tilde w^{(p)}_{q+1},$ $ \tilde w_{q+1}^{(c)},$ $ \tilde w_{q+1}^{(t)}$ as follows
$$\tilde w^{(p)}_{q+1}:=w^{(p)}_{q+1}\chi,\quad \tilde w^{(c)}_{q+1}:=w^{(c)}_{q+1}\chi,\quad \tilde w^{(t)}_{q+1}:=w^{(t)}_{q+1}\chi^2.$$
Using Lemma \ref{lem:Lp} similarly to \eqref{estimate wqp} we obtain
 for $t\in(4\sigma_q\wedge T_L, T_L]$ and some universal constant $M_0\geq 1$
\begin{equation}\label{estimate wqp ps}
\|\tilde w_{q+1}^{(p)}(t)\|_{L^2}\lesssim \sum_{\xi\in\Lambda}\frac{1}{4|\Lambda|}M(M_L^{1/2}\delta_{q+1}^{1/2}+\gamma_{q+1}^{1/2})\|W_{(\xi)}\|_{C_tL^2}\leq \frac{M_0}{2}(M_L^{1/2}\delta_{q+1}^{1/2}+\gamma_{q+1}^{1/2}),
\end{equation}
where we used $150\alpha<\frac17$,
and for $t\in (\frac{\sigma_q}2\wedge T_L,4\sigma_q\wedge T_L]$
\begin{equation}\label{estimate wqp ps1}
\|\tilde w_{q+1}^{(p)}(t)\|_{L^2}\leq \frac{M_0}{2}((M_L+qA)^{1/2}+\gamma_{q+1}^{1/2}).
\end{equation}

For general $L^p$-norms we apply (\ref{bounds}) and (\ref{estimate aN ps}) to deduce for $t\in(4\sigma_q\wedge T_L, T_L]$, $p\in(1,\infty)$
\begin{equation}\label{principle est1 ps}
\aligned
\|\tilde w_{q+1}^{(p)}(t)\|_{L^p}&\lesssim \ell^{-8}(M_L^{1/2}\delta_{q+1}^{1/2}+\gamma_{q+1}^{1/2})r_\perp^{2/p-1}r_\|^{1/p-1/2},
\endaligned
\end{equation}
\begin{equation}\label{correction est ps}
\aligned
\|\tilde w_{q+1}^{(c)}(t)\|_{L^p}&\lesssim \ell^{-22}(M_L^{1/2}\delta_{q+1}^{1/2}+\gamma_{q+1}^{1/2})r_\perp^{2/p}r_\|^{1/p-3/2},
\endaligned
\end{equation}
and
\begin{equation}\label{temporal est1 ps}
\aligned
\|\tilde w_{q+1}^{(t)}(t)\|_{L^p}&\lesssim  \ell^{-16}(M_L\delta_{q+1}+\gamma_{q+1})r_\perp^{2/p-1}r_\|^{1/p-2}\lambda_{q+1}^{-1},
\endaligned
\end{equation}
\begin{equation}\label{corr temporal ps}
\aligned
&\|\tilde w_{q+1}^{(c)}(t)\|_{L^p}+\|\tilde w_{q+1}^{(t)}(t)\|_{L^p}\lesssim\ell^{-2} (M_L^{1/2}\delta_{q+1}^{1/2}+\gamma_{q+1}^{1/2})r_\perp^{2/p-1}r_\|^{1/p-1/2}.
\endaligned
\end{equation}
For a general $t\in (\frac{\sigma_q}2\wedge T_L,4\sigma_q\wedge T_L]$ we have
\begin{equation}\label{principle est1 ps1}
\aligned
\|\tilde w_{q+1}^{(p)}(t)\|_{L^p}&\lesssim \ell^{-8}((M_L+qA)^{1/2}+\gamma_{q+1}^{1/2})r_\perp^{2/p-1}r_\|^{1/p-1/2},
\endaligned
\end{equation}
\begin{equation}\label{correction est ps1}
\aligned
\|\tilde w_{q+1}^{(c)}(t)\|_{L^p}&\lesssim \ell^{-22}((M_L+qA)^{1/2}+\gamma_{q+1}^{1/2})r_\perp^{2/p}r_\|^{1/p-3/2},
\endaligned
\end{equation}
and
\begin{equation}\label{temporal est1 ps1}
\aligned
\|\tilde w_{q+1}^{(t)}(t)\|_{L^p}&\lesssim  \ell^{-16}((M_L+qA)+\gamma_{q+1})r_\perp^{2/p-1}r_\|^{1/p-2}\lambda_{q+1}^{-1},
\endaligned
\end{equation}
\begin{equation}\label{corr temporal ps1}
\aligned
&\|\tilde w_{q+1}^{(c)}(t)\|_{L^p}+\|\tilde w_{q+1}^{(t)}(t)\|_{L^p}\lesssim \ell^{-2}((M_L+qA)^{1/2}+\gamma_{q+1}^{1/2})r_\perp^{2/p-1}r_\|^{1/p-1/2},
\endaligned
\end{equation}
where we used $M_L+qA<\ell^{-1}$ and $K<\ell^{-1}$.

Define ${w}_{q+1}:=\tilde{w}_{q+1}^{(p)}+\tilde{w}_{q+1}^{(p)}+\tilde{w}_{q+1}^{(t)}$. Combining (\ref{estimate wqp ps}), (\ref{correction est ps}) and (\ref{temporal est1 ps}) we obtain for $t\in(4\sigma_q\wedge T_L, T_L]$
\begin{equation}\label{estimate wq ps}
\aligned
\|w_{q+1}(t)\|_{L^2}&\leq (M_L^{1/2}\delta_{q+1}^{1/2}+\gamma_{q+1}^{1/2})\left(\frac{M_0}{2}+C\lambda_{q+1}^{44\alpha-2/7}+C(M_L^{1/2}\delta_{q+1}^{1/2}+\gamma_{q+1}^{1/2})\lambda_{q+1}^{32\alpha-1/7}\right)
\\&\leq \frac34M_{0}(M_L^{1/2}\delta_{q+1}^{1/2}+\gamma_{q+1}^{1/2}),
\endaligned
\end{equation}
and
for $t\in(\frac{\sigma_q}2\wedge T_L, 4\sigma_q\wedge T_L]$
\begin{equation}\label{estimate wq ps1}
\aligned
\|w_{q+1}(t)\|_{L^2}
&\leq \frac34M_{0}((M_L+qA)^{1/2}+\gamma_{q+1}^{1/2}).
\endaligned
\end{equation}

With these bounds, we have all in hand to complete the proof of Proposition~\ref{p:iteration}. We split the details it into several subsections.

\subsubsection{Proof of \eqref{iteration ps}}
First, \eqref{estimate wq ps} together with \eqref{error ps} yields for $t\in(4\sigma_q\wedge T_L, T_L]$
$$
\|v_{q+1}(t)-v_{q}(t)\|_{L^{2}}\leq \|w_{q+1}(t)\|_{L^{2}}+\|v_{\ell}(t)-v_{q}(t)\|_{L^{2}}\leq M_0(M_L^{1/2} \delta_{q+1}^{1/2}+\gamma_{q+1}^{1/2}).
$$
For $t\in (\frac12\sigma_q\wedge T_L, 4\sigma_q\wedge T_L]$
we use \eqref{estimate wq ps1}, \eqref{error ps} to obtain
$$
\|v_{q+1}(t)-v_{q}(t)\|_{L^{2}}\leq \|w_{q+1}(t)\|_{L^{2}}+\|v_{\ell}(t)-v_{q}(t)\|_{L^{2}}\leq M_0((M_L+qA)^{1/2}+\gamma_{q+1}^{1/2}).
$$
For $t\in  [0,\frac12\sigma_q\wedge T_L]$ it holds $\chi(t)=0$ as well as $v_{q}(t)=0$ by \eqref{inductionv ps} implying
$$
\|v_{q+1}-v_{q}\|_{C_{t}L^{2}}=\|v_{\ell}-v_{q}\|_{C_{t}L^{2}}=0.%
$$
Hence \eqref{iteration ps} follows.

\subsubsection{Proof that \eqref{iteration ps} implies \eqref{inductionv ps} on the level $q+1$}
From \eqref{iteration ps} we find for $t\in [0,\frac12\sigma_q\wedge T_L]$ by \eqref{iteration ps} that
$$
\|v_{q+1}\|_{C_{t}L^{2}}\leq \sum_{r=0}^{q}\|v_{r+1}-v_{r}\|_{C_{t}L^{2}}=0,%
$$
proving the second bound in \eqref{inductionv ps} on the level $q+1$.
For the first bound in \eqref{inductionv ps} on the level $q+1$, we obtain in view of \eqref{iteration ps} for $t\in (\frac12\sigma_q\wedge T_L,T_{L}]$
$$
\begin{aligned}
\|v_{q+1}(t)\|_{L^{2}}&\leq \sum_{0\leq r\leq q}\|v_{r+1}(t)-v_{r}(t)\|_{L^{2}}\\
&\leq M_{0} \left(M_{L}^{1/2}\sum_{0\leq r\leq q}\delta_{r+1}^{1/2}+\sum_{0\leq r\leq q}(M_L+rA)^{1/2}\mathbf{1}_{t\in (\frac{\sigma_{r}}{2}\wedge T_{L},4\sigma_{r}\wedge T_{L}]}+\sum_{0\leq r\leq q}\gamma_{r+1}^{1/2}\right)\\&\leq M_{0} \left(M_{L}^{1/2}\sum_{0\leq r\leq q}\delta_{r+1}^{1/2}+3(M_L+qA)^{1/2}+\sum_{0\leq r\leq q}\gamma_{r+1}^{1/2}\right),
\end{aligned}
$$
where we used the fact that by the definition of $\sigma_{r}=2^{-r}$ each $t\in [0,T_L]$ only belongs to three intervals $(\frac{\sigma_{r}}{2}\wedge T_{L},4\sigma_{r}\wedge T_{L}]$.  Hence \eqref{inductionv ps} follows.

\subsubsection{Proof of \eqref{inductionv C1} on the level $q+1$}
Using \eqref{estimate aN ps1} we find for $t\in [0,T_L]$ as in \eqref{principle est2}, \eqref{correction est2}, \eqref{temporal est2}
\begin{equation}\label{principle est2 ps}
\aligned
\|\tilde w_{q+1}^{(p)}\|_{C^1_{t,x}}&\lesssim \ell^{-15}((M_L+qA)^{1/2} +\gamma_{q+1}^{1/2})r_\perp^{-1}r_\|^{-1/2}\lambda_{q+1}^2,
\endaligned
\end{equation}
\begin{equation}\label{correction est2 ps}
\aligned
\|\tilde w_{q+1}^{(c)}\|_{C^1_{t,x}}
&\lesssim \ell^{-29}((M_L+qA)^{1/2}+\gamma_{q+1}^{1/2})r_\|^{-3/2}\lambda_{q+1}^2,
\endaligned
\end{equation}
and
\begin{equation}\label{temporal est2 ps}
\aligned
\|\tilde w_{q+1}^{(t)}\|_{C^1_{t,x}}
\lesssim \ell^{-24}(M_L+qA+\gamma_{q+1})r_\perp^{-1}r_\|^{-2}\lambda_{q+1}^{1+\alpha}.
\endaligned
\end{equation}
In particular, we see that the fact that the time derivative of $\chi$  behaves like $2\sigma_{q}^{-1}\lesssim \ell^{-1}$ does not pose any problems as the $C^{0}_{t,x}$-norms of $\tilde{w}_{q+1}^{(p)}$, $\tilde{w}_{q+1}^{(c)}$ and $\tilde{w}_{q+1}^{(t)}$ always contain smaller powers of $\ell^{-1}$.

Combining   \eqref{eq:vl2 ps} and \eqref{principle est2 ps}, \eqref{correction est2 ps}, \eqref{temporal est2 ps} with (\ref{ell}) we obtain for $t\in[0, T_L]$
\begin{equation*}
\aligned
\|v_{q+1}\|_{C^1_{t,x}}&\leq \|v_\ell\|_{C^1_{t,x}}+\|w_{q+1}\|_{C^1_{t,x}}\\
&\leq  ({M}_L+qA+\gamma_{q+1})^{1/2}\left(\lambda_{q+1}^\alpha+C\lambda_{q+1}^{30\alpha+22/7}+C\lambda_{q+1}^{58\alpha+20/7}+C\lambda_{q+1}^{50\alpha+3}\right)
\leq M_L^{1/2}\lambda_{q+1}^4,
\endaligned
\end{equation*}
where we used ${M}_L+qA+\gamma_{q+1}\leq \ell^{-1}$.
This implies  \eqref{inductionv C1}.

\subsubsection{Proof of \eqref{induction w}}
Similarly, we derive  the following estimates: for $t\in[0, T_L]$ it follows from (\ref{ell}), (\ref{estimate aN ps1})  that
\begin{equation}\label{principle est22 ps}
\aligned
\|\tilde w_{q+1}^{(p)}+\tilde w_{q+1}^{(c)}\|_{C_tW^{1,p}}
&\lesssim  \ell^{-8}((M_L+qA)^{1/2}+\gamma_{q+1}^{1/2})r_\perp^{2/p-1}r_\|^{1/p-1/2}\lambda_{q+1},
\endaligned
\end{equation}
and
\begin{equation}\label{corrector est2 ps}
\aligned
\|\tilde w_{q+1}^{(t)}\|_{C_tW^{1,p}}
&\lesssim \ell^{-16}(M_L+qA+\gamma_{q+1})r_\perp^{2/p-2}r_\|^{1/p-1}\lambda_{q+1}^{-2/7}.
\endaligned
\end{equation}
For $p=\frac{31}{30}<\frac{16}{15}$ we find for $t\in [0,T_L]$
\begin{align*}
\|w_{q+1}\|_{C_tW^{1,p}}\lesssim \ell^{-16}(M_L+qA+\gamma_{q+1})\lambda_{q+1}^{-\frac{15}{217}}
\leq \frac34 M_L^{1/2}\delta_{q+1}^{1/2},
\end{align*}
where we used the condition for $\alpha, \beta$ in the second step, which combined with \eqref{error ps1} implies
\eqref{induction w}.

\subsubsection{Proof of  \eqref{p:gamma}}

We control the energy similarly to Section \ref{s:energy}.
By definition, we find
	\begin{align}\label{eq:deltaE ps}
	\begin{aligned}
	&\big|\|v_{q+1}\|_{L^2}^2-\|v_q\|_{L^2}^2-3\gamma_{q+1}\big|
	\leq \big|\|\tilde w_{q+1}^{(p)}\|_{L^2}^2-3\gamma_{q+1}\big|+\|\tilde w_{q+1}^{(c)}+\tilde w_{q+1}^{(t)}\|_{L^2}^2\\
	&\ +2\|v_\ell(\tilde w_{q+1}^{(c)}+\tilde w_{q+1}^{(t)})\|_{L^1}
+2\|v_\ell \tilde w_{q+1}^{(p)}\|_{L^1}+2\|\tilde w_{q+1}^{(p)}(w_{q+1}^{(c)}+\tilde w_{q+1}^{(t)})\|_{L^1}+|\|v_\ell\|_{L^2}^2-\|v_q\|_{L^2}^2|.
	\end{aligned}
	\end{align}
	Let us begin with the bound of the first term on the right hand side of \eqref{eq:deltaE ps}. We use \eqref{can} and the fact that $\mathring{R}_\ell$ is traceless to deduce for $t\in (4\sigma_q\wedge T_L,T_L]$
	\begin{align*}
	|\tilde w_{q+1}^{(p)}|^2-\frac{3\gamma_{q+1}}{(2\pi)^3}=6\sqrt{\ell^2+|\mathring{R}_\ell|^2}+\sum_{\xi\in \Lambda}a_{(\xi)}^2P_{\neq0}|W_{(\xi)}|^2
	,
	\end{align*}
	hence
	\begin{align}\label{eq:gg ps}
	|\|\tilde w_{q+1}^{(p)}\|_{L^2}^2-3\gamma_{q+1}|\leq 6\cdot(2\pi)^3\ell+6\|\mathring{R}_\ell\|_{L^1}+\sum_{\xi\in \Lambda}\Big|\int a_{(\xi)}^2P_{\neq0}|W_{(\xi)}|^2\Big|.
	\end{align}
	Here we estimate each term separately. Using \eqref{ell1}  we find
	\begin{align*}
	6\cdot(2\pi)^3\ell\leq 6\cdot(2\pi)^3\lambda_{q+1}^{-{3\alpha}/2}\leq  \frac{1}{ 48}\lambda_{q+1}^{-2\beta }M_L\leq\frac{1}{ 48}\delta_{q+1}M_L,
	\end{align*}
	which requires $2\beta <{3\alpha}/2$ and choosing $a$ large to absorb the constant.
	Using \eqref{eq:R} on $\mathring{R}_q$ and $\supp\varphi_\ell\subset [0,\ell]$ we know for $t\in (4\sigma_q\wedge T_L,T_L]$
	\begin{align*}
	6\|\mathring{R}_\ell(t)\|_{L^1}\leq  6\delta_{q+1}M_L.
	\end{align*}
	For the last term in \eqref{eq:gg ps} we use similar argument as in Section \ref{s:energy} to have since $M_{L}\geq 1$
		\begin{align*}
	\sum_{\xi\in\Lambda}&\Big|\int a_{(\xi)}^2\mathbb{P}_{\neq0}|W_{(\xi)}|^2\Big|\lesssim\lambda_{q+1}^{158\alpha-1/7}(M_L+K)\lesssim\lambda_{q+1}^{160\alpha-1/7}
	\leq\frac1{24}\lambda_1^{2\beta}\lambda_{q+1}^{-2\beta} M_L=\frac1{24}\delta_{q+1}M_L,
	\end{align*}
	where we used $M_{L}+K\leq\ell^{-1}\leq\lambda_{q+1}^{2\alpha}$ as well as $160\alpha+2\beta<1/7$.
This completes the bound for \eqref{eq:gg ps}.

	Going back to \eqref{eq:deltaE ps}, we control the  remaining terms as follows.
Using  the estimates \eqref{correction est ps}, \eqref{temporal est1 ps} and \eqref{ell} we  have for $t\in (4\sigma_q\wedge T_L,T_L]$
	\begin{align*}
	\|\tilde w_{q+1}^{(c)}+\tilde w_{q+1}^{(t)}\|_{L^2}^2&
	\lesssim (M_L+\gamma_{q+1})\lambda_{q+1}^{88\alpha-4/7}
	+(M_L^2+\gamma_{q+1}^2)\lambda_{q+1}^{64\alpha-2/7}\leq \frac{1}{48}
	\lambda_{q+1}^{-2\beta }M_L\leq \frac{\delta_{q+1}}{48}M_L,
	\end{align*}
	where we use  $M_L+\gamma_{q+1}\leq \ell^{-1}$ to absorb $M_L$ and $\gamma_{q+1}$. Similarly we use \eqref{eq:vl ps} together with \eqref{estimate wqp ps} to have for $t\in (4\sigma_q\wedge T_L,T_L]$
	\begin{align*}
	&2\|v_\ell(\tilde w_{q+1}^{(c)}+\tilde w_{q+1}^{(t)})\|_{L^1}+2\|\tilde w_{q+1}^{(p)}(\tilde w_{q+1}^{(c)}+\tilde w_{q+1}^{(t)})\|_{L^1}
	\lesssim M_{0}((M_L+qA)^{1/2}+K^{1/2})\|\tilde w_{q+1}^{(c)}+\tilde w_{q+1}^{(t)}\|_{L^2}\\
	&\qquad \lesssim M_{0}((M_L+qA)^{1/2}+K^{1/2})\left((M_L^{1/2}+\gamma_{q+1}^{1/2})\lambda_{q+1}^{44\alpha-2/7}
	+(M_L+\gamma_{q+1})\lambda_{q+1}^{32\alpha-1/7}\right)
	\\
	&\qquad\leq \frac{1}{48}
	\lambda_{q+1}^{-2\beta }M_L\leq \frac{\delta_{q+1}}{48}M_L,
	\end{align*}
where we used $M_L+qA+K\leq \ell^{-1}$ and we possibly increased $a$ to absorb $M_{0}$.
	We use \eqref{ell} and \eqref{principle est1 ps} and $\|v_\ell\|_{C^1_{t,x}}\leq \|v_q\|_{C^1_{t,x}} $ to have for every $\eps>0$
	\begin{align*}
	2\|v_\ell \tilde w_{q+1}^{(p)}\|_{L^1}
	&\lesssim\|v_\ell\|_{L^\infty}\|\tilde w_{q+1}^{(p)}\|_{L^1}\lesssim M_L^{1/2}\lambda_q^4\ell^{-8}(M_L^{1/2}\delta_{q+1}^{1/2}+\gamma_{q+1}^{1/2})r_{\perp}^{1-\eps}r_{\|}^{\frac12(1-\eps)}\\
	&\lesssim (M_L+K)\lambda_{q+1}^{17\alpha-\frac87(1-\eps)}\leq \frac{1}{96}
	\lambda_{q+1}^{-2\beta }M_L\leq \frac{\delta_{q+1}}{96}M_L.
	\end{align*}
	For the last  terms by \eqref{error ps} we have
	\begin{align*}
	|\|v_\ell\|_{L^2}^2-\|v_q\|_{L^2}^2|&\leq \|v_\ell-v_q\|_{L^2}(\|v_\ell\|_{L^2}+\|v_q\|_{L^2})\\
	&\lesssim \ell\lambda_q^4 M_L^{1/2} M_{0}(M_L+qA+K)^{1/2}\leq \lambda_{q+1}^{-\alpha}M_{0}(M_L+qA+K)\\
	&\leq \frac{1}{96}
	\lambda_{q+1}^{-2\beta }M_L\leq \frac{\delta_{q+1}}{96}M_L .
	\end{align*}
	which requires
	$M_{L}+qA+K\leq \lambda_{q+1}^{\alpha-2\beta}$ and $a$ large enough to absorb the extra constant.
	
	Combining the above estimates \eqref{p:gamma} follows.

\subsubsection{Proof of \eqref{iteration R}}\label{s:R}

The new stress $\mathring{R}_{q+1}$ is defined in the spirit of Section~\ref{s:def}, notably, as in \eqref{stress} by replacing the three correctors $w_{q+1}^{(p)}$, $w_{q+1}^{(c)}$ and $w_{q+1}^{(t)}$ by their respective truncated counterparts $\tilde w_{q+1}^{(p)}$, $\tilde w_{q+1}^{(c)}$ and $\tilde w_{q+1}^{(t)}$.
In order to establish  the iterative estimate \eqref{iteration R}, we distinguish  three cases corresponding to the three time intervals in \eqref{iteration R}.

\textbf{I.} Let $t\in (\sigma_q\wedge T_L,T_L]$. Note that if $T_{L}\leq \sigma_{q}$ then there is nothing to estimate here, hence we assume that $\sigma_{q}<T_{L}$ and $t\in (\sigma_{q},T_{L}]$.
Then we have $\chi(t)=1$ and therefore  we  use similar calculations  as in Section~\ref{sss:R} to derive the first inequality in \eqref{iteration R} by changing $\bar e$ into $M_L$. More precisely, we notice that  the bounds in \eqref{principle est1 ps1}, \eqref{correction est ps1}, \eqref{temporal est1 ps1} hold for $t\in [0,T_L]$ and the main difference between these bounds and the corresponding bounds \eqref{principle est1}, \eqref{correction est}, \eqref{temporal est1} in Section \ref{sss:v} is the change from $\bar{e}^{1/2}\delta_{q+1}^{1/2}$ to $(M_L+qA)^{1/2}+\gamma_{q+1}^{1/2}$, which can be bounded by $\ell^{-1/2}$. In the estimate of $\mathring{R}_{q+1}$ in Section~\ref{sss:R} we always omit $\delta_{q+1}^{1/2}$. As a result, we could  estimate similarly for most of the terms.
 The strongest requirement comes from the bound of $R_{\rm{cor}}$, namely, we have
 $$
 \|R_{\rm cor}(t)\|_{L^{p}}\lesssim((M_L+qA)^{1/2}+\gamma_{q+1}^{1/2})^3\lambda_{q+1}^{49\alpha-1/7}\leq \ell^{-3/2}\lambda_{q+1}^{49\alpha-1/7}
\leq\frac{M_{L}}{5}\lambda_{q+1}^{-2\beta b}\leq \frac{M_{L}}{5}\delta_{q+2},
 $$
which is satisfied provided $ \lambda_{q+1}^{52\alpha-\frac17}\leq\frac{M_{L}}{5}\lambda_{q+1}^{-2\beta b}.$
Another difference comes from the estimate involving the process $z$ since its initial value is not smooth. We estimate the corresponding terms in what follows.

The first part comes from  $z_\ell\mathring\otimes w_{q+1}$ and $w_{q+1}\mathring\otimes z_\ell$ in  $R_{\mathrm{lin}}$.
According to Lemma 9 in  \cite{DV15}
we know
$$
\|z^{{in}}(t)\|_{L^{\infty}}= \|e^{t\Delta}u_{0}\|_{L^{\infty}}\lesssim (t^{-3/4}+1)\|u_{0}\|_{L^{2}},
$$
whereas $\|Z_q(t)\|_{L^{\infty}}\leq L\lambda_{q+1}^{\alpha/8}$ is controlled uniformly in time due to the stopping time.
Hence we use \eqref{principle est1 ps1} and \eqref{corr temporal ps1} to have for $p=\frac{32}{32-7\alpha}$
\begin{align*}
\|z_\ell\mathring\otimes w_{q+1}(t)\|_{L^1}&\leq \|z_\ell(t)\|_{L^{\infty}}\|w_{q+1}(t)\|_{L^p}\lesssim \big(\sup_{s\in[t-\ell,t]}s^{-3/4}+\lambda_{q+1}^{\alpha/8}\big)M_L^{1/2}\|w_{q+1}(t)\|_{L^p}
\\
&\lesssim (\ell^{-1}+{\lambda_{q+1}^{\alpha/8}})M_{L}^{1/2}((M_L+qA)^{1/2}+\gamma^{1/2}_{q+1})\ell^{-8}r_{\perp}^{2/p-1}r_{\|}^{1/p-1/2}\\
&\lesssim \lambda_{q+1}^{21\alpha-8/7}\leq \frac{M_{L}}{20}\lambda_{q+1}^{-2\beta b}\leq \frac{M_{L}}{20}\delta_{q+2},
\end{align*}
where we used $t> \sigma_q\geq 2\ell$ which is satisfied by our assumptions at the beginning of Section~\ref{s:par}.

 The second part comes from $R_{\mathrm{com}}$ and $R_{\mathrm{com1}}$. In view of the bound for $\kappa\in (0,1)$ and $t\in (\sigma_q,T_L]$
\begin{align*}
&\|f_\ell\otimes g_\ell-(f\otimes g)*_x\phi_\ell*_t\varphi_\ell\|_{L^1}\lesssim \|f_\ell\otimes g_\ell-f\otimes g\|_{L^1}+\|f\otimes g-(f\otimes g)*_x\phi_\ell*_t\varphi_\ell\|_{L^1}\\
&\qquad\lesssim \|f_\ell-f\|_{L^2}\|g_\ell\|_{L^2}+\|f\|_{L^2}\|g_\ell-g\|_{L^2}+\|f\otimes g-(f\otimes g)*_x\phi_\ell*_t\varphi_\ell\|_{L^1}
\\
&\qquad\lesssim \ell^{\kappa}(\|f\|_{C_{[\frac{\sigma_q}2,t]}^\kappa L^2}+\|f\|_{C_{[\frac{\sigma_q}2,t]} C^\kappa})\|g\|_{L_t^\infty L^2}+\ell^{\kappa}(\|g\|_{C_t^\kappa L^2}+\|g\|_{C_t C^\kappa})\|f\|_{L_t^\infty L^2},
	\end{align*}
we estimate
for $\delta<\frac1{12}$ and $t\in (\sigma_q,T_L]$
\begin{align*}
&\|R_{\mathrm{com}}(t)\|_{L^1}\leq (\ell \|v_q\|_{C^1_{t,x}}+\ell^{1/2-2\delta}(\|{Z_q}\|_{C_t^{1/2-2\delta}L^\infty}+\|{Z_q}\|_{C_tC_x^{1/2-2\delta}}))(\|v_q\|_{L^\infty_tL^2}+\|{z_q}\|_{C_tL^2})\\
&\qquad\qquad\qquad+\ell^{1/2}(\|z^{in}\|_{C_{[\frac{\sigma_q}2,t]}^{1/2}L^2}+\|z^{in}\|_{C_{[\frac{\sigma_q}2,t]}{H}^{1/2}})(\|v_q\|_{L^\infty_tL^2}+\|{z_q}\|_{C_tL^2}).
\end{align*}	
Due to  Lemma 9 in  \cite{DV15} it holds
$$
\|z^{in}(t)\|_{H^{1/2}}\lesssim (t^{-1/4}+1)\|u_0\|_{L^2}=(t^{-1/4}+1)\|u_0\|_{L^2},
$$
for $|t-s|\leq 1$
\begin{align*}
\|z^{in}(t)-z^{in}(s)\|_{L^{2}}	&=\|e^{s\Delta}(e^{(t-s)\Delta}u_0-u_0)\|_{L^2}
	=\Big\|e^{s\Delta}\int_0^{t-s}\Delta e^{r\Delta}u_0\dif r\Big\|_{L^2}
	\\&	\lesssim\int_0^{t-s}(1+r^{-1/2})\| e^{s\Delta}u_0\|_{H^{1}}\dif r\lesssim |t-s|^{1/2}(1+s^{-1/2})\|u_0\|_{L^2},
\end{align*}
and for $|t-s|\geq1$
\begin{align*}
\|z^{in}(t)-z^{in}(s)\|_{L^{2}}\leq 2\|u_0\|_{L^2}	\leq 2 |t-s|^{1/2}\|u_0\|_{L^2}.
\end{align*}
Hence, in view of our assumption $\sigma_{q}\geq \ell^{1/2}$ we deduce for $t\geq \sigma_q$
$$
\begin{aligned}
&\|R_{\mathrm{com}}(t)\|_{L^1}\\
&\qquad\lesssim M_0(\ell\lambda_q^4+\ell^{1/2-2\delta}\lambda_{q+1}^{{\alpha/4}})(M_L+qA+K)^{1/2}M_L^{1/2}+\ell^{1/2}(\sigma_q^{-1/2}+1)M_0(M_L+qA+K)^{1/2}M_L^{1/2}
\end{aligned}
$$
and we require
$$
M_0\ell \lambda_q^4(M_L+K+qA)^{1/2}M_L^{1/2}\leq M_0\lambda_{q+1}^{-\alpha}(M_L+K+qA)^{1/2}M_L^{1/2}< \frac{M_L}{30}\lambda_{q+1}^{-2\beta b}\leq\frac{M_L }{30}\delta_{q+2},
$$
$$M_0\ell^{1/2-2\delta}{\lambda_{q+1}^{{\alpha/4}}}(M_L+K+qA)^{1/2}M_L^{1/2}\leq M_0 \ell^{1/3}{\lambda_{q+1}^{{\alpha/4}}}(M_L+K+qA)^{1/2}M_L^{1/2}< \frac{M_L}{30}\lambda_{q+1}^{-2\beta b}
\leq\frac{M_L}{30}\delta_{q+2},
$$
$$
 \ell^{1/2}\sigma_q^{-1/2}M_0(M_L+K+qA)^{1/2}M_L^{1/2}\leq\ell^{1/4}M_0(M_L+K+qA)^{1/2}M_L^{1/2}< \frac{M_L}{30}\lambda_{q+1}^{-2\beta b}
\leq\frac{M_L}{30}\delta_{q+2},
$$
where we used $\alpha>18\beta b$ and $\ell^{1/4}M_0(M_L+K+qA)<\frac{M_{L}}{30}\lambda_{q+1}^{-2\beta b}$ and $\ell^{1/3}M_0(M_L+K+qA)^{1/2}M_L^{1/2}<\frac{M_{L}}{30}\lambda_{q+1}^{-2\beta b-\frac\alpha4}$.

For $R_{\mathrm{com}1}$ we know
{\begin{align*}
\|R_{\mathrm{com}1}(t)\|_{L^1}\lesssim&  M_0(M_L+K+qA)^{1/2}\|z_\ell(t)-z_{q+1}(t)\|_{L^2}
\\\lesssim&  M_0(M_L+K+qA)^{1/2}(\|z_\ell(t)-z_{q}(t)\|_{L^2}+\|z_q(t)-z_{q+1}(t)\|_{L^2})
\\\lesssim&(\ell^{1/2-2\delta}+\ell^{1/2}\sigma_q^{-1/2}+\lambda_{q+1}^{-\frac\alpha8(1-\delta)})M_0(M_L+K+qA),
\end{align*}}
which can be bounded by $\frac{\delta_{q+2}M_L}{10}$ by the same choice of the parameters and $\alpha >18b\beta$ to have $\lambda_{q+1}^{-\frac\alpha8(1-\delta)}M_0(M_L+K+qA)<\frac{\delta_{q+2}M_L}{30}$.

\textbf{II.} Let  $t\in (\frac{\sigma_q}2\wedge T_L, \sigma_q\wedge T_L]$. If $T_{L}\leq \frac{\sigma_{q}}2$ then there is nothing to estimate, hence we may assume $\frac{\sigma_{q}}2<T_{L}$ and  $t\in (\frac{\sigma_q}2, \sigma_q\wedge T_L]$. Then
we decompose $\mathring{R}_\ell=\chi^2\mathring{R}_\ell+(1-\chi^2)\mathring{R}_\ell$. The first part $\chi^2\mathring{R}_\ell$ is canceled (up to the oscillation error $\chi^{2}R_{\rm osc}$) by
$\tilde w_{q+1}^{(p)}\otimes \tilde w_{q+1}^{(p)}=\chi^{2} w_{q+1}^{(p)}\otimes  w_{q+1}^{(p)}$
 and $\chi^2\p_tw_{q+1}^{(t)}=\p_t\tilde w_{q+1}^{(t)}-(\chi^2)'w_{q+1}^{(t)}$. So in this case in the definition of $\mathring{R}_{q+1}$ most terms are similar to in the case \textbf{I.}  and can be estimated similarly as in Section \ref{sss:R}.
 We only have to consider $(1-\chi^2)\mathring{R}_\ell$, and
 $$\div R_{\mathrm{cut}}:=\chi'(t)(w_{q+1}^{(p)}(t)+w_{q+1}^{(c)}(t))+(\chi^2)'(t)w_{q+1}^{(t)}(t).$$
We know
$$\|(1-\chi^2)\mathring{R}_\ell(t)\|_{L^1}\leq\sup_{s\in [t-\ell,t]} \|\mathring{R}_q(s)\|_{L^1}.$$
For $R_{\mathrm{cut}}$ we realize that the bounds \eqref{principle est1 ps1} and \eqref{corr temporal ps1} also hold for $w_{q+1}^{(p)}$, $w_{q+1}^{(c)}$ and $w_{q+1}^{(t)}$. Then we have for $\eps>0$
\begin{align*}
 \|R_{\mathrm{cut}}(t)\|_{L^1}&\leq\|\chi'(t)(w_{q+1}^{(p)}(t)+w_{q+1}^{(c)}(t))\|_{L^1}+\|(\chi^2)'(t)w_{q+1}^{(t)}(t)\|_{L^1}\\
 &\lesssim\frac1{\sigma_q}\|w_{q+1}^{(p)}(t)\|_{L^1}+\frac1{\sigma_q}\|w_{q+1}^{(c)}(t)\|_{L^1}+\frac1{\sigma_q}\|w_{q+1}^{(t)}(t)\|_{L^1}
 \\
 &\leq\frac1{\sigma_q}\ell^{-8}((M_L+qA)^{1/2}+\gamma_{q+1}^{1/2})r_\perp^{1-2\eps}r_{\|}^{1/2-\eps} \leq \frac{M_L}{10}\delta_{q+2},
\end{align*}
where we use $\sigma_q^{-1}\leq \ell^{-1}$. For $R_{\mathrm{com}}$ and $R_{\mathrm{com1}}$ since $t\geq \sigma_q/2$ and $4\ell\leq \sigma_q$ we have a similar bound as in the first case.

\textbf{III.} For $t\in [0,\frac{\sigma_q}2\wedge T_L]$ we know $v_\ell(t)=v_{q+1}(t)=0$ and
$$\mathring{R}_{q+1}=\mathring{R}_\ell+R_{\mathrm{com1}}+R_{\mathrm{com}}.$$
We estimate $\|R_{\mathrm{com1}}(t)\|_{L^1}$, $\|R_{\mathrm{com}}(t)\|_{L^1}$ directly and have
\begin{align*}
\|R_{\mathrm{com1}}(t)\|_{L^1}+\|R_{\mathrm{com}}(t)\|_{L^1}\leq 4\|{z_q}\|_{C_tL^2}^2
\leq 4M_L.
\end{align*}

As a result, it follows
$$\|\mathring{R}_{q+1}(t)\|_{L^1}\leq \sup_{s\in [t-\ell,t]} \|\mathring{R}_q(s)\|_{L^1}
+4M_L,$$
which completes the proof of \eqref{iteration R}.

\subsubsection{Proof of \eqref{eq:R} and \eqref{bd:R} on the level $q+1$}
Finally, we observe that \eqref{eq:R} on the level $q+1$ is just \eqref{iteration R} and \eqref{bd:R} on the level $q+1$ follows from \eqref{iteration R} and \eqref{bd:R}.

With this,  the proof of Proposition~\ref{p:iteration} is complete.

 \appendix
  \renewcommand{\appendixname}{Appendix~\Alph{section}}
   \renewcommand{\theequation}{A.\arabic{equation}}
   \section{Proof of Theorem \ref{convergence}}
   \label{s:appA}

  \begin{proof}[Proof of Theorem \ref{convergence}] We first consider the second result giving the stability of martingale solutions with respect to the initial time and initial condition.  By (M3) with $p=1, 2$ and \cite[Corollary~B.3]{FR08} we have that for all $T>0$
 	$$\E^{P_n}\Big[\sup_{t\in [0,T]}\|x(t)\|_{L^2}^2+\int_{s_n}^T\|x(r)\|_{H^\gamma}^2\dif r\Big]\leq C.$$
 	By the same argument as in the proof of \cite[Theorem 3.1]{HZZ19} we therefore deduce that the set
 $\{P_n\}_{n\in\mathbb{N}}$ is tight in $\mathbb{S}:=C([0,\infty);H^{-3})\cap L^2_{\textrm{loc}}([0,\infty);L^2_\sigma)$.
 	
 	Without loss of generality, we  assume that $P_n$  converges weakly to some probability measure $P\in \mathscr{P}(\Omega_{0})$. It remains to prove that $P\in \mathscr{C}(s,x_0, C_{p})$.
 	By Skorokhod's representation theorem, there exists a probability space $(\tilde{\Omega},\tilde{\mathcal{F}},\tilde{P})$ and $\mathbb{S}$-valued
 	random variables $\tilde{x}_n$ and $\tilde{x}$ such that
 	\begin{enumerate}
 		\item[(i)] $\tilde{x}_n$ has the law $P_n$ for each $n\in\mathbb{N}$,
 		\item[(ii)] $\tilde{x}_n\rightarrow \tilde{x}$ in $\mathbb{S}$ $\tilde{P}$-a.s., and $\tilde{x}$ has the law $P$.
 	\end{enumerate}
 	(M1) follows from by the same argument as in the proof of \cite[Lemma A.3]{FR08} with $\|\cdot\|_V$ replaced by $\|\cdot\|_{H^\gamma}$.
 	(M2) follows by exactly the same argument as in the
 	the proof of \cite[Theorem 3.1]{HZZ19}. (M3) also follows from a similar argument as in the proof of \cite[Lemma A.3]{FR08}. The main difference is the change of initial time. It is sufficient to prove that for $t>0$ there is a set $T_t\subset (0,t)$ of zero Lebesgue measure such that for all $r\notin T_t$ and all bounded positive and real-valued $\mathcal{B}_r^0$-measurable continuous functions $g$ on $\mS$,
 	\begin{align}\label{sup}
 	\E^P[g(x) (E^1(t)-E^1(s))]\leq \E^P[g(x)(E^1(r)-E^1(s))].
 	\end{align}
 	For the case  $r>s$, \eqref{sup} follows as  in the proof of \cite[Lemma A.3]{FR08}. We only need to prove that
 	\begin{align}\label{sup1}
 	\E^P[g(x) (E^1(t)-E^1(s))]\leq 0.
 	\end{align}
 	For $P_n$ we know
 	\begin{align*}
 	\E^{P_n}[g(x_n) (E^1(t)-E^1(s_n))]\leq 0,
 	\end{align*}
 	i.e.
 	\begin{align}\label{sup3}
 	\E^{P_n}\Big[g(x_n) \Big(\|x(t)\|_{L^2}^2+2\int_{s_n}^t\|x(r)\|_{H^\gamma}^2\dif r-C_1C_{G}(t-s_{n})\Big)\Big]\leq g(x_n)\|x_n\|_{L^2}^2.
 	\end{align}
 	It is sufficient to prove that
 	\begin{align}\label{sup2} \int_{s}^t\|\tilde{x}(r)\|_{H^\gamma}^2\dif r\leq\liminf_{n\to\infty}\int_{s_n}^t\|\tilde{x}_n(r)\|_{H^\gamma}^2\dif r.
 	\end{align}
 	If \eqref{sup2} holds, \eqref{sup1} follows by taking limit on the both side of \eqref{sup3}.
 	
 	It is easy to see that $\tilde{x}_n1_{[s_n,t]}\to \tilde{x}1_{[s,t]}$ in $L^2(0,T;L^2_\sigma)$ $P$-a.s. Then by lower semicontinuity \eqref{sup2} follows.
 	
 	A similar argument also applies to $E^p$.
 	Consequently,  (M3) and the stability result follows.

 	The first result giving existence of a martingale solution can be easily deduced by Galerkin approximation and the same arguments as in \cite{FR08, GRZ09}. Since (M3) in  Definition \ref{martingale solution} is different, we first check that the law of  $u_n$ satisfies (M3) with $u_n$ satisfying
 	the following Galerkin approximation
 	\begin{equation*}
 	\aligned
 	\dif u_n-\Delta u_n \dif t+\Pi_n\mathbb{P}\div(u_n\otimes u_n)\dif t&=\Pi_n\dif B,
 	\\\div u_n&=0,
 	\\u_n(t)&=\Pi_nx_0,\quad 0\leq t\leq s,
 	\endaligned
 	\end{equation*}
 	where $\Pi_n$ and $\mathbb{P}$, respectively, is the Galerkin and Leray projection operator, respectively. It is classical to show that a solution exists on some probability space $(\Omega,\mathcal{F},\bf{P})$ with a $GG^{*}$-Wiener process $B$. For notational simplicity and without loss of generality, we may assume that the probability space and the Wiener process do not depend on $n$. We denote the law of $u_n$ on $\Omega_0$ by $P_n$. By using It\^o's formula we find that the process
 	\begin{align*}
 	M_{n,u_n}^{E}(t):=\|u_n(t)\|_{L^2}^2-\|u_n(s)\|_{L^2}^2+2\int_s^t\|\nabla u_n(l)\|_{L^2}^2\dif l-\|\Pi_nG\|_{L_2(U;L^2_\sigma)}^2(t-s)
 	\end{align*}
 	is a martingale. Recalling that we have normalized the norm on $H^{\gamma}$ to satisfy $\|f\|_{H^{\gamma}}\leq \|\nabla f\|_{L^{2}}$, it follows  for $t\geq r\geq s$
 	\begin{align*}
 	\|u_n(t)\|_{L^2}^2-\|u_n(r)\|_{L^2}^2+2\int_r^t\| u_n(l)\|_{H^\gamma}^2\dif l-\|\Pi_nG\|_{L_2(U;L^2_\sigma)}^2(t-r)\leq M_{n,{u_{n}}}^E(t)-M_{n,{u_{n}}}^E(r).
 	\end{align*}
 	Then under $P_n$
 	\begin{align*}
 	E_t^1-E_r^1\leq M_{n,x}^{E}(t)-M_{n,x}^{E}(r).
 	\end{align*}
 	Taking the conditional expectation with respect to $\mathcal{B}_r^0$ on  both sides, we find that under $P_n$ the process
 	$\{E^1(t)-E^1(s)\}_{t\geq s}$ is a $(\mathcal{B}_t^0)_{t\geq s}$-supermartingale. Similarly, we find that under $P_n$ the process
 	$\{E^p(t)-E^p(s)\}_{t\geq s}$ is a $(\mathcal{B}_t^0)_{t\geq s}$-supermartingale for all $p\in\mathbb{N}$. Finally, the existence of martingale solutions follows from tightness and the above proof of stability.
 \end{proof}

  \renewcommand{\theequation}{B.\arabic{equation}}

 \section{Intermittent jets}
\label{s:B}

In this  part we recall the construction of  intermittent jets from \cite[Section 7.4]{BV19}.
We point out that the construction is entirely deterministic, that is, none of the functions below depends on $\omega$.
Let us begin with  the following geometric lemma which can be found in  \cite[Lemma 6.6]{BV19}.

\bl\label{geometric}
	Denote by $\overline{B_{1/2}}(\mathrm{Id})$ the closed ball of radius $1/2$ around the identity matrix $\mathrm{Id}$, in the space of $3\times 3$ symmetric matrices. There
	exists $\Lambda\subset \mathbb{S}^2\cap \mathbb{Q}^3$ such that for each $\xi\in \Lambda$ there exists a  $C^\infty$-function $\gamma_\xi:\overline{B_{1/2}}(\mathrm{Id})\rightarrow\mathbb{R}$ such that
	\begin{equation*}
	R=\sum_{\xi\in\Lambda}\gamma_\xi^2(R)(\xi\otimes \xi)
	\end{equation*}
	for every symmetric matrix satisfying $|R-\mathrm{Id}|\leq 1/2$.
	For $C_\Lambda=8|\Lambda|(1+8\pi^3)^{1/2}$, where $|\Lambda|$ is the cardinality of the set $\Lambda$, we define
	the constant
	\begin{equation*}
	M=C_\Lambda\sup_{\xi\in \Lambda}(\|\gamma_\xi\|_{C^0}+\sum_{|j|\leq N}\|D^j\gamma_\xi\|_{C^0}).
	\end{equation*}
	For each $\xi\in \Lambda$ let us define $A_\xi\in \mathbb{S}^2\cap \mathbb{Q}^3$ to be an orthogonal vector to $\xi$. Then for each $\xi\in\Lambda$ we have that $\{\xi, A_\xi, \xi\times A_\xi\}\subset \mathbb{S}^2\cap \mathbb{Q}^3$ form an orthonormal basis for $\mathbb{R}^3$.
	We label by $n_*$ the smallest natural such that
	\begin{equation*}\{n_*\xi, n_*A_\xi, n_*\xi\times A_\xi\}\subset \mathbb{Z}^3\end{equation*}
	for every $\xi\in \Lambda$.
\el

Let $\Phi:\mathbb{R}^2\rightarrow\mathbb{R}$ be a smooth function with support in a ball of radius $1$. We normalize $\Phi$ such that
$\phi=-\Delta \Phi$ obeys
\begin{equation}\label{eq:phi}
\frac{1}{4\pi^2}\int_{\mathbb{R}^2}\phi^2(x_1,x_2)\dif x_1\dif x_2=1.
\end{equation}
By definition we know $\int_{\mathbb{R}^2}\phi dx=0$. Define $\psi:\mathbb{R}\rightarrow\mathbb{R}$ to be a smooth, mean zero function with support in the ball of radius $1$ satisfying
\begin{equation}\label{eq:psi}
\frac{1}{2\pi}\int_{\mathbb{R}}\psi^2(x_3)\dif x_3=1.
\end{equation}
For parameters $r_\perp, r_\|>0$ such that
\begin{equation*}r_\perp\ll r_\|\ll1,\end{equation*}
we define the rescaled cut-off functions
\begin{equation*}\phi_{r_\perp}(x_1,x_2)=\frac{1}{r_\perp}\phi\left(\frac{x_1}{r_\perp},\frac{x_2}{r_\perp}\right),\quad
\Phi_{r_\perp}(x_1,x_2)=\frac{1}{r_\perp}\Phi\left(\frac{x_1}{r_\perp},\frac{x_2}{r_\perp}\right),\quad \psi_{r_\|}(x_3)=\frac{1}{r_\|^{1/2}}\psi\left(\frac{x_3}{r_\|}\right).\end{equation*}
We periodize $\phi_{r_\perp}, \Phi_{r_\perp}$ and $\psi_{r_\|}$ so that they are viewed as periodic functions on $\mathbb{T}^2, \mathbb{T}^2$ and $\mathbb{T}$ respectively.

Consider a large real number $\lambda$ such that $\lambda r_\perp\in\mathbb{N}$, and a large time oscillation parameter $\mu>0$. For every $\xi\in \Lambda$ we introduce
\begin{equation*}\aligned
\psi_{(\xi)}(t,x)&:=\psi_{\xi,r_\perp,r_\|,\lambda,\mu}(t,x):=\psi_{r_{\|}}(n_*r_\perp\lambda(x\cdot \xi+\mu t))
\\ \Phi_{(\xi)}(x)&:=\Phi_{\xi,r_\perp,\lambda}(x):=\Phi_{r_{\perp}}(n_*r_\perp\lambda(x-\alpha_\xi)\cdot A_\xi, n_*r_\perp\lambda(x-\alpha_\xi)\cdot(\xi\times A_\xi))\\
\phi_{(\xi)}(x)&:=\phi_{\xi,r_\perp,\lambda}(x):=\phi_{r_{\perp}}(n_*r_\perp\lambda(x-\alpha_\xi)\cdot A_\xi, n_*r_\perp\lambda(x-\alpha_\xi)\cdot(\xi\times A_\xi)),
\endaligned\end{equation*}
where $\alpha_\xi\in\mathbb{R}^3$ are shifts to ensure that $\{\Phi_{(\xi)}\}_{\xi\in\Lambda}$ have mutually disjoint support.

The intermittent jets $W_{(\xi)}:\mathbb{R}\times\mathbb{T}^3 \rightarrow\mathbb{R}^3$ are defined as in \cite[Section 7.4]{BV19}.
\begin{equation}\label{intermittent}W_{(\xi)}(t,x):=W_{\xi,r_\perp,r_\|,\lambda,\mu}(t,x):=\xi\psi_{(\xi)}(t,x)\phi_{(\xi)}(x).\end{equation}
By the choice of $\alpha_\xi$ we have that
\begin{equation}\label{Wxi}
W_{(\xi)}\otimes W_{(\xi')}\equiv0, \textrm{ for } \xi\neq \xi'\in\Lambda,
\end{equation}
and by the normalizations \eqref{eq:phi} and \eqref{eq:psi} we obtain
$$
\frac1{(2\pi)^3}\int_{\mathbb{T}^3}W_{(\xi)}(t,x)\otimes W_{(\xi)}(t,x)\dif x=\xi\otimes\xi.
$$
These facts combined with Lemma \ref{geometric} imply that
\begin{equation}\label{geometric equality}
\frac1{(2\pi)^3}\sum_{\xi\in\Lambda}\gamma_\xi^2(R)\int_{\mathbb{T}^3}W_{(\xi)}(t,x)\otimes W_{(\xi)}(t,x)\dif x=R,
\end{equation}
for every symmetric matrix $R$ satisfying $|R-\textrm{Id}|\leq 1/2$. Since $W_{(\xi)}$ are not divergence free, we  introduce the corrector term
\begin{equation}\label{corrector}
W_{(\xi)}^{(c)}:=\frac{1}{n_*^2\lambda^2}\nabla \psi_{(\xi)}\times \textrm{curl}(\Phi_{(\xi)}\xi)
=\textrm{curl\,curl\,} V_{(\xi)}-W_{(\xi)}.
\end{equation}
with
\begin{equation*}
V_{(\xi)}(t,x):=\frac{1}{n_*^2\lambda^2}\xi\psi_{(\xi)}(t,x)\Phi_{(\xi)}(x).
\end{equation*}
Thus we have
\begin{equation*}
\div\left(W_{(\xi)}+W_{(\xi)}^{(c)}\right)\equiv0.
\end{equation*}

Finally, we  recall the key   bounds from \cite[Section 7.4]{BV19}. For $N, M\geq0$ and $p\in [1,\infty]$ the following holds
\begin{equation}\label{bounds}\aligned
&\|\nabla^N\partial_t^M\psi_{(\xi)}\|_{C_{{[-2,t]}}L^p}\lesssim r^{1/p-1/2}_\|\left(\frac{r_\perp\lambda}{r_\|}\right)^N
\left(\frac{r_\perp\lambda \mu}{r_\|}\right)^M,\\
&\|\nabla^N\phi_{(\xi)}\|_{L^p}+\|\nabla^N\Phi_{(\xi)}\|_{L^p}\lesssim r^{2/p-1}_\perp\lambda^N,\\
&\|\nabla^N\partial_t^MW_{(\xi)}\|_{C_{{[-2,t]}}L^p}+\frac{r_\|}{r_\perp}\|\nabla^N\partial_t^MW_{(\xi)}^{(c)}\|_{C_{{[-2,t]}}L^p}+\lambda^2\|\nabla^N\partial_t^MV_{(\xi)}
\|_{C_{{[-2,t]}}L^p}\\&\lesssim r^{2/p-1}_\perp r^{1/p-1/2}_\|\lambda^N\left(\frac{r_\perp\lambda\mu}{r_\|}\right)^M,
\endaligned\end{equation}
where the implicit constants may depend on $p, N$ and $M$, but are independent of $\lambda, r_\perp, r_\|, \mu$.

%

\def\cprime{$'$} \def\ocirc#1{\ifmmode\setbox0=\hbox{$#1$}\dimen0=\ht0
  \advance\dimen0 by1pt\rlap{\hbox to\wd0{\hss\raise\dimen0
  \hbox{\hskip.2em$\scriptscriptstyle\circ$}\hss}}#1\else {\accent"17 #1}\fi}

\end{document}